\setlist[enumerate]{topsep=2pt,itemsep=3pt,parsep=1pt} 
\def\@tocline#1#2#3#4#5#6#7{\relax
  \ifnum #1>\c@tocdepth 
  \else
    \par \addpenalty\@secpenalty\addvspace{#2}%
    \begingroup \hyphenpenalty\@M
    \@ifempty{#4}{%
      \@tempdima\csname r@tocindent\number#1\endcsname\relax
    }{%
      \@tempdima#4\relax
    }%
    \parindent\z@ \leftskip#3\relax \advance\leftskip\@tempdima\relax
    \rightskip\@pnumwidth plus4em \parfillskip-\@pnumwidth
    #5\leavevmode\hskip-\@tempdima
      \ifcase #1
       \or\or \hskip 1em \or \hskip 2em \else \hskip 3em \fi%
      #6\nobreak\relax
    \hfill\hbox to\@pnumwidth{\@tocpagenum{#7}}\par
    \nobreak
    \endgroup
  \fi}
\theoremstyle{plain}
\newtheorem{theorem}{Theorem}[section]
\newtheorem{corollary}[theorem]{Corollary}
\newtheorem{lemma}[theorem]{Lemma}
\newtheorem{proposition}[theorem]{Proposition}
\newtheorem{example}[theorem]{Example}
\newtheorem{remark}[theorem]{Remark}
\newtheorem{question}[theorem]{Question}
\numberwithin{equation}{section}
\theoremstyle{definition}
\newtheorem{definition}[theorem]{Definition}
\begin{document}

\title{On the Structure of Busemann Spaces with Non-Negative Curvature I}

\date{\today}

\thanks{}

\author{Bang-Xian Han}

\author{Liming Yin}

\begin{abstract}
    We extend the structure theory of Burago--Gromov--Perelman for Alexandrov spaces with curvature bounded below, to the setting of Busemann spaces with non-negative curvature. 
    We prove that any finite-dimensional Busemann space with non-negative curvature, satisfying Ohta's $S$-concavity and local semi-convexity, admits a non-trivial Hausdorff measure of integer dimension, satisfies the measure contraction property and contains an open dense topological manifold part with full measure.
    We also show that such spaces are rectifiable and almost every point admits a unique tangent cone isometric to a finite-dimensional Banach space. 
    Finally, based on refined analysis on strainers and asymmetric angles, we obtain a Hausdorff dimension estimate of singular strata, and show that such spaces admit a natural measure-theoretic stratification.
    Our results enrich the theory of synthetic sectional curvature lower bound for metric spaces, and provide some useful tools and examples to study Finslerian metric measure spaces.  
    \medskip

    \noindent\textbf{Keywords:} Busemann concave spaces, non-negative curvature, non-Riemannian metric spaces, rectifiability, singular sets, unique Banach tangent cones 
\end{abstract}

\maketitle
\tableofcontents

\section{Introduction}
\subsection{Motivation and object}
The study of how curvature shapes the geometry of spaces is a central theme in both mathematics and physics. 
While classical differential geometry--particularly Riemannian geometry--has provided deep insights into the interplay between curvature tensor and large-scale geometry of smooth spaces, it is now widely recognized that synthetic notions of curvature bound are powerful tools for understanding the structure of non-smooth spaces.    
Such non-smooth metric (measure) spaces naturally arise as (measured) Gromov--Hausdorff limits, quotients, gluing, and suspensions of smooth manifolds.

Among various synthetic notions of curvature bound, those defined via comparison of geodesic triangles with their counterparts in model spaces are considered as `generalized' sectional curvature bounds.
More precisely, if geodesic triangles in a geodesic space are `fatter' or `thinner' than their comparison triangles in space forms, we say that the space has curvature bounded below (CBB spaces) or curvature bounded above (CBA spaces) respectively.
This approach, initiated by A.D. Alexandrov \cite{alexandrov1957uber}, has been extensively studied from various perspectives, resulting in a rich and well-developed theory; see for instance \cite{ballmann1995lectures,bridson1999metric,burago1992ad,alexander2024alexandrov} and bibliography therein.

After the groundbreaking work of Burago, Gromov and Perelman \cite{burago1992ad}, a comprehensive structure theory for Alexandrov spaces with curvature bounded below has been extensively developed.
This theory, which illuminates how sectional curvature bounds interact with the topological, geometric, and measure-theoretic properties of underling spaces, has been further investigated by Otsu and Shioya \cite{otsu1994riemannian}, Perelman \cite{perelman1994dc,perelman1991alexandrov}, Perelman and Petrunin \cite{perel1994extremal,perelman1994quasigeodesics}, Petrunin \cite{petrunin1998parallel}, Ambrosio and Bertrand \cite{ambrosio2018dc}, et al.
Recently, Lytchak and Nagano \cite{lytchak2019geod,lytchak2021topological} established a comprehensive structure theory of Alexandrov spaces with curvature bounded above and the local geodesic extension property (so-called GCBA spaces).
These structure theories reveal the \emph{Riemannian nature} of finite-dimensional Alexandrov spaces, which possess many properties analogous to those of Riemannian manifolds.

Besides Alexandrov geometry, there have been significant efforts to develop synthetic notions of (sectional) curvature for non-Riemannian metric spaces such as normed spaces and Finsler manifolds.
These efforts can be traced back to the pioneering works of Busemann \cite{busemann1948spaces,busemann1955geometry}, who introduced a weaker notion of non-positive curvature, characterized by the convexity of distance functions along geodesics, to study Finslerian spaces:
\begin{definition}[Busemann convex]\label{def2:intro}
    A complete geodesic space $(X,d)$ is said to be Busemann convex if for any pair of constant-speed geodesics $\gamma, \eta: [0,1]\rightarrow X$, the function
    \begin{equation}
        t\mapsto {d(\gamma(t), \eta(t))}
    \end{equation}
    is convex on $[0,1]$.
\end{definition}
\noindent
By the triangle inequality, one can equivalently define the Busemann convexity as follows (see for example \cite[Proposition 8.1.2]{papadopoulos2014metric}).  
\begin{definition}[Busemann convex]\label{def1:intro}
    A complete geodesic space $(X,d)$ is said to be Busemann convex if for any pair of constant-speed geodesics $\gamma, \eta: [0,1]\rightarrow X$ starting from a common point $\gamma(0)=\eta(0)$, the function
    \begin{equation}\label{eq:Busemann_monotone_intro1}
        t\mapsto \frac{d(\gamma(t), \eta(t))}{t}
    \end{equation}
    is non-decreasing on $[0,1]$.
\end{definition}
\noindent
This synthetic notion of curvature upper bound, together with another notion introduced by Busemann, called G-spaces, has aroused great interest in the study of non-Riemannian geometry, leading to very rich literature; see for example \cite{papadopoulos2014metric,thurston1996Gspace} and bibliography therein.

Similar to Definition \ref{def1:intro}, there is a counterpart of non-negative (sectional) curvature in the sense of Busemann, called \emph{Busemann concavity}.
However, despite the terminology `concavity', it is not meaningful to define non-negative curvature bound, in the same way as Definition \ref{def2:intro}.
 \begin{definition}[Busemann concave]
    A complete geodesic space $(X,d)$ is said to be Busemann concave if for any pair of constant-speed geodesics $\gamma, \eta: [0,1]\rightarrow X$ starting from a common point $\gamma(0)=\eta(0)$, the function
    \begin{equation}\label{def3:intro}
        t\mapsto \frac{d(\gamma(t), \eta(t))}{t}
    \end{equation}
    is non-increasing on $[0,1]$.
\end{definition}
\noindent
The Busemann concavity can also be defined via comparison triangles by asking for a comparison inequality, which is strictly weaker than the Alexandrov triangle condition (see Remark \ref{rmk:Busemann_concavity_via_comparison_triangle}).
In addition,  using appropriate distortion coefficients, it is not hard to define Busemann space with (arbitrary) lower curvature bound.

Recently, in an effort to investigate the compatibility between synthetic analogues of sectional curvature lower bounds for Finsler manifolds and a synthetic notion of Ricci curvature bound called \emph{measure contraction property} (see \cite{ohta2007measure} and \cite{S-O2}), Kell \cite{kell2019sectional} studied the metric measure geometry of Busemann concave spaces and uniformly smooth spaces. 
Under the existence of a non-trivial Hausdorff measure, Kell proved several geometric and analytic properties of Busemann concave spaces, including the measure contraction property, Poincar\'e inequality and uniqueness of tangent cones.
However, due to the instability of the Busemann concavity with respect to Gromov--Hausdorff convergence, it is not clear whether tangent cones are Banach or not.

In light of Burago--Gromov--Perelman \cite{burago1992ad} and Kell \cite{kell2019sectional}, a natural question arises: 
\begin{question}
    Do Busemann concave spaces admit a refined structure theory analogous to that of Alexandrov spaces with curvature bounded below, such as rectifiability, uniqueness of Banach tangent cones, and fine measure-theoretic estimates for singular sets?
\end{question}
\noindent

While some geometric properties of Busemann spaces appears straightforward, establishing finer regularity results presents significant challenges.
Besides the instability of Busemann spaces in Gromov--Hausdorff topology, one of the main difficulties is that Busemann spaces, in general, do not possess a robust notion of angles as in Alexandrov spaces.
This difficulty reflects non-Riemannian nature of Busemann spaces, making their tangent cones hard to handle.
It becomes more subtle, if we do not have a non-trivial Hausdorff measure. 
For instance, there is a compact convex subset $K$ in the infinite-dimensional $\ell_p$-space with $1<p<\infty$, such that the Gromov--Hausdorff limit of blow-ups fails to coincide with the tangent cone at any interior point of $K$ \cite{kell2019sectional}.

In a recent work of Fujioka and Gu \cite{fujioka2025top}, building upon Lytchak--Nagano \cite{lytchak2021topological} and Lytchak--Nagano--Stadler \cite{stadler2024cat}, a breakthrough about the topological regularity of Busemann spaces with non-positive curvature and the local geodesic extension property (so-called GNPC spaces) has been made.
A key ingredient in their work is two distinct notions of angle.
These notions of angle, which coincide in GCBA spaces, capture the subtle non-Riemannian structure of Busemann spaces.
Based upon these notions of angle, Fujioka and Gu introduce `almost orthogonal coordinates' in the context of Busemann spaces with non-positive curvature, namely strainer maps, enabling them to implement the strategy developed in \cite{stadler2024cat}.

Concerning Busemann spaces with non-negative curvature, the problem seems \emph{more difficult}.
In contrast to the Busemann convexity and Alexandrov triangle condition, the Busemann concavity yields less geometric information.
For instance, unlike Busemann convex spaces, the Busemann concavity provides no information on the behavior of distance functions from a fixed point along geodesics, resulting in limited understanding of differentiability of distance functions.
In addition, the `geodesic contraction map' fails to provide a deformation retraction.
Therefore, it is not clear whether Busemann concave spaces are locally contractible or not (see \cite[Section 8]{fujioka2025top}), thereby leading to little knowledge of their local topological structure.

In order to establish a fine structure theory beyond Kell \cite{kell2019sectional} (and Le Donne \cite{donne2010unique_tangent_cone}), in which the authors establish that almost all points in Busemann concave spaces having non-trivial Hausdorff measure admit unique tangent cones isometric to Carnot groups equipped with left-invariant sub-Finsler metrics, we need to impose some natural (intrinsic) assumptions, which are compatible with a large class of Finsler manifolds and normed spaces.
Notice that any Banach space with strictly convex norm is \emph{both Busemann convex and Busemann concave}.  
From the viewpoint of Banach and Minkowski space theory, to obtain finer geometric properties, it is natural to impose some quantitative convexity condition on the norm.
In their well-known paper \cite{ball1994sharp}, Ball, Carlen and Lieb introduced two fundamental notions in the geometric theory of Banach space, called \emph{$2$-uniform convexity} and \emph{$2$-uniform smoothness} (see \cite{ball1994sharp} for general notions of $p$-uniform convexity and $q$-uniform smoothness), which ask for the existence of two least constants $C, S\geq 1$ such that for any $u,v$ in a Banach space $(E,\|\cdot\|)$, it holds
\begin{equation}\label{eq1:intro}
    \left\| \frac{u+v}{2}\right\|^2
    \leq
    \frac{1}{2}\|u\|^2 + \frac{1}{2}\|v\|^2 - \frac{1}{4C}\| u-v\|^2,
\end{equation}
and
\begin{equation}\label{eq2:intro}
    \left\| \frac{u+v}{2}\right\|^2
    \geq
    \frac{1}{2}\|u\|^2 + \frac{1}{2}\|v\|^2 - \frac{S}{4}\|u-v\|^2.
\end{equation}
Notice that the uniform convexity constant $C$ and the uniform smoothness constant $S$ are in fact the moduli of convexity and concavity of the squared norm $\|\cdot\|^2/2$ (see \cite{ohta2012non}). 
In particular, $C=1$ or $S=1$ holds if and only if the norm $\|\cdot\|$ is induced by an inner product.  
Thus, the $2$-uniform convexity \eqref{eq1:intro} and the $2$-uniform smoothness \eqref{eq2:intro} can also be viewed as quantitative continuity conditions, where the constants $S, C$ measure the degree to which the Banach space $(E, \| \cdot\|)$ deviates from a Hilbert space (see \cite[Section 4]{ohta2009uniform}).
Inspired by Ball--Carlen--Lieb \cite{ball1994sharp} and Busemann's work \cite{busemann1955geometry}, Ohta \cite{ohta2007convexities,ohta2009uniform} (see also \cite{ohta2021comparison,kondo2012toponogov}) generalized these two concepts to metric spaces, called $C$-convexity and $S$-concavity\footnote{They are called 2-uniform convexity and 2-uniform smoothness by Ohta \cite{ohta2009uniform}, and renamed as $k$-convexity and $k$-concavity in \cite{ohta2021comparison}.}, and found their corresponding geometric quantities in Finsler geometry.   
As noticed by Shen \cite{shen2001lectures} and Ohta \cite{ohta2009uniform,ohta2021comparison}, the $S$-concavity is satisfied by Finsler manifolds with non-negative flag curvature whose tangent spaces are uniformly smooth Minkowski spaces.
In particular, $S$-concave spaces are just Alexandrov spaces with non-negative curvature when $S=1$ (see \cite{ohta2021comparison}). 
Within this framework, Ohta \cite{ohta2007convexities,ohta2009uniform} further generalized the Alexandrov--Toponogov comparison theorem to Finsler spaces.

According to the preceding discussion, we impose the following assumptions on the underlying space: the \emph{$S$-concavity} introduced by Ohta \cite{ohta2007convexities,ohta2009uniform}, and the \emph{local semi-convexity}, which is weaker than Ohta's \emph{$C$-convexity} (see Section \ref{sect:S_concave_local_squared_convex_Busemann_concave} for more details).
The present work is devoted to establishing a fine structure theory for Busemann concave spaces satisfying such properties, extending the well-known results \cite{burago1992ad} on Alexandrov spaces with curvature bounded below to the Busemann setting.

\subsection{Main results}
Throughout this part, we always assume that $(X,d)$ is an $S$-concave, locally semi-convex, Busemann concave space for some $S \geq 1$.

Our first main theorem concerns the Hausdorff dimension and Hausdorff measure of $X$.  
A key ingredient is strainer number, which is the maximal cardinality of `local almost orthogonal coordinates' in $X$.

\begin{theorem}[Proposition \ref{prop:Buseman_concave_MCP} and Corollary \ref{cor:strained_points_open_dense_top_manifold}]\label{thm:main_result_1}
    Let $(X,d)$ be an $S$-concave, locally semi-convex, Busemann concave space for some $S \geq 1$.
    Then $X$ is of finite Hausdorff dimension if and only if it has finite strainer number.
    In either case, both values coincide with the topological dimension of $X$.
    Moreover, $X$ admits a non-trivial Hausdorff measure $\mathcal{H}^n$ of integer dimension $n$, and $(X,d,\mathcal{H}^n)$ satisfies the measure contraction property $\mathrm{MCP}(0, n)$.
    In particular, $(X,d)$ is doubling and proper, and contains a topological $n$-manifold part which is open and dense in $X$. 
\end{theorem}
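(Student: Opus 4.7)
The plan is to adapt the Burago--Gromov--Perelman program to the non-Riemannian setting, with strainer maps as the central geometric tool and with Busemann concavity providing directly the volume contraction bound.

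First, I would develop the structure around strained points. An $(n,\delta)$-strainer $\{(a_i,b_i)\}_{i=1}^{n}$ at a point $p$ should yield a bi-Lipschitz embedding of a neighborhood of $p$ onto an open subset of $\mathbb{R}^n$ via the strainer map $f_p=(d(\cdot,a_1),\ldots,d(\cdot,a_n))$. The upper Lipschitz bound is automatic from the triangle inequality; the lower bound is obtained by combining Busemann concavity, which keeps geodesics from spreading too rapidly, with the quantitative $S$-concavity estimate on squared distances, which plays the role of a weakened Toponogov comparison and forces almost-orthogonality of the coordinate functions. Local semi-convexity controls the higher-order error terms along geodesic variations so that the estimates remain uniform in a neighborhood. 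From this embedding, the dimension equalities follow: a maximal strainer of size $n<\infty$ gives local Hausdorff dimension exactly $n$ and local non-triviality of $\mathcal{H}^n$, while an $(n{+}1)$-strainer would force $\dim_{\mathcal{H}}(X)\geq n{+}1$. The topological dimension equals $n$ by invariance of domain applied to the charts, together with the standard fact that a topological space containing an open set homeomorphic to $\mathbb{R}^n$ has topological dimension $\geq n$.

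The measure contraction property is the cleanest consequence of Busemann concavity alone. Fix $p\in X$ and (after suitable measurable selection) consider the geodesic interpolation map $\phi_t(x):=\gamma_{p,x}(t)$. The non-increasing monotonicity of $t\mapsto d(\gamma(t),\eta(t))/t$ applied to $\gamma_{p,x}$ and $\gamma_{p,y}$ yields $d(\phi_t(x),\phi_t(y))\geq t\cdot d(x,y)$, so $\phi_t^{-1}$ is $(1/t)$-Lipschitz on the image. Consequently, for every Borel $A\subset X$ with $0<\mathcal{H}^n(A)<\infty$,
\begin{equation*}
\mathcal{H}^n(A)\;=\;\mathcal{H}^n\bigl(\phi_t^{-1}(\phi_t(A))\bigr)\;\leq\;t^{-n}\,\mathcal{H}^n\bigl(\phi_t(A)\bigr),
\end{equation*}
which is exactly the volume bound demanded by $\mathrm{MCP}(0,n)$, with the interpolation map itself providing the dynamical transport plan. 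Doubling then follows from $\mathrm{MCP}(0,n)$ by Sturm's standard argument, and properness is automatic from doubling plus completeness. Density of the topological $n$-manifold part follows because the set of points admitting a genuine $n$-strainer is open (strainer inequalities are stable under small perturbations of the base point) and its complement cannot contain an open set (otherwise one could realize strainer number $>n$ there, contradicting maximality).

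The main obstacle is the first step: establishing that the strainer map is not merely bi-Lipschitz into $\mathbb{R}^n$, but that its image contains an open set, so that $\mathcal{H}^n$ is genuinely non-trivial. In the Alexandrov setting this is achieved by a Brouwer-degree argument built on true angle comparison; here, having only the Busemann monotonicity of normalized distances and the quadratic estimate from $S$-concavity, one must reconstruct a surrogate angle (exploiting the two distinct notions of angle highlighted by Fujioka--Gu) and run the degree argument with it. Keeping the constants from local semi-convexity uniform over small neighborhoods, so that the topological manifold part is truly open rather than merely a dense $G_\delta$, is the subtle analytic core of the argument.
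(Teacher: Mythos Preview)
Your overall architecture is close to the paper's, and the MCP step via the contraction bound $\mathcal{H}^n(\phi_t(A))\geq t^n\mathcal{H}^n(A)$ is exactly what is used (the paper cites Kell for this). But there are two genuine gaps.

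First, your density argument is logically backwards. You claim that if the complement of the set of $(n,\delta)$-strained points contained an open set, ``one could realize strainer number $>n$ there.'' An open set with \emph{no} $n$-strainers would witness strainer number $\leq n-1$ on that set, not $>n$; nothing forces a higher-dimensional strainer to appear. The paper's route is different and relies on an ingredient you have not invoked: Kell's result that all open subsets of a Busemann concave space have the \emph{same} Hausdorff dimension. This constancy, combined with the bi-Lipschitz chart at a single $(n,\delta)$-strained point, forces the strainer number at \emph{every} point to equal $n$, and then density is immediate from the definition of strainer number. Without constancy you cannot pass from ``some point has strainer number $n$'' to ``the strainer number of $X$ is $n$ everywhere,'' and your dimension equalities and density claim both collapse. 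Relatedly, you never address the self-improvement of strainers (improving $\delta$ to any smaller $\delta'$), which is needed to make the strainer-number definition work and is nontrivial here: the standard straightening trick from Burago--Gromov--Perelman fails due to the asymmetry of the angle notion, and the paper introduces a new ``dequeue/enqueue'' procedure to get around this.

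Second, your proposed openness mechanism is different from the paper's and likely problematic. You suggest a Brouwer-degree argument, but you do not yet know the domain is a manifold, so degree theory does not apply cleanly. The paper instead proves that the strainer map is $\varepsilon$-open via a descent criterion (for every $x$ and nearby target $v$ one finds $y$ with $|f(y)v|-|f(x)v|\leq -\varepsilon|xy|$), handling the asymmetry of strainers by equipping $\mathbb{R}^k$ with an anisotropic weighted $L^1$-norm and inducting on $k$. Bi-Lipschitzness is then obtained not from a direct two-sided estimate as you sketch, but by showing injectivity (if $f(x)=f(y)$ one manufactures a $(k{+}1)$-strainer, contradicting maximality) and combining it with $\varepsilon$-openness. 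Your ``lower Lipschitz bound from almost-orthogonality'' sketch does not engage with the asymmetry issue, which is precisely where the non-Riemannian difficulty lies.
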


Theorem \ref{thm:main_result_1} generalizes the well-known results of Burago--Gromov--Perelman \cite[Corollary 6.5, 6.7, 6.8]{burago1992ad} to the Busemann setting.
We remark that, sufficient conditions for the existence of a non-trivial Hausdorff measure on Busemann concave spaces, is an open problem left in \cite{kell2019sectional}.

\medskip

Our second main result shows that, if $X$ is of finite Hausdorff dimension $n$, then its almost regular part $\mathcal{A}(n,\delta)$ has full measure.  
We refer to Section \ref{sect:dimension} for the definition of almost regular parts $\mathcal{A}(n,\delta)$, namely the sets of \emph{$(k,\delta)$-strained points}.
Roughly speaking, $\mathcal{A}(n,\delta)$ consists of points some of whose neighborhoods admit almost orthogonal coordinate charts of dimension $n$.
\begin{theorem}[Theorem \ref{thm:full_measure_n_strained_points}]\label{thm:main_result_2}
    Let $X$ be of finite Hausdorff dimension $n$.
    Then for any $\delta>0$, $\mathcal{H}^n(X \setminus \mathcal{A}(n,\delta))=0$.
    In particular, $X$ contains an open dense topological $n$-manifold part which has full $n$-Hausdorff measure.
\end{theorem}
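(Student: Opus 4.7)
The plan is to prove two things: (a) $\mathcal{A}(n,\delta)$ is open in $X$, and (b) its complement has vanishing $\mathcal{H}^n$-measure. The ``in particular'' statement then follows by combining (a) and (b) with the open dense topological $n$-manifold part already furnished by Theorem~\ref{thm:main_result_1}, since at every $(n,\delta)$-strained point the full strainer map provides a bi-Lipschitz chart onto an open subset of $\mathbb{R}^n$. Openness in (a) is a stability statement: an $(n,\delta)$-strainer $(a_i,b_i)_{i=1}^n$ at $p$ remains an $(n,\delta)$-strainer at every sufficiently close point, because the quantitative angle-like inequalities coming from $S$-concavity and the local semi-convexity of the distance functions $d(\cdot,a_i)$ depend continuously on the base point.

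For (b) I would stratify $X$ according to the maximal strainer number. Define, for $0\le k\le n-1$,
\[
E_k := \{p\in X : p \text{ is } (k,\delta)\text{-strained but not } (k+1,\delta)\text{-strained}\},
\]
so that $X\setminus \mathcal{A}(n,\delta)=\bigsqcup_{k=0}^{n-1} E_k$ (with $E_0$ consisting of points admitting no $(1,\delta)$-strainer at all). Fix $p\in E_k$ and a $(k,\delta)$-strainer $(a_i,b_i)_{i=1}^k$ at $p$, and consider the strainer map
\[
\Phi := (d(\cdot, a_1),\ldots, d(\cdot, a_k)) : B(p,r)\to \mathbb{R}^k,
\]
which is $\sqrt{k}$-Lipschitz. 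The crucial claim is that, for some $r>0$, the restriction $\Phi|_{E_k\cap B(p,r)}$ is bi-Lipschitz onto its image. Granted this, $\dim_H(E_k\cap B(p,r))\le k<n$, and a countable covering argument, enabled by the properness supplied by Theorem~\ref{thm:main_result_1}, gives $\mathcal{H}^n(E_k)=0$; summing over $k$ yields (b).

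The bi-Lipschitz claim is the main obstacle. In the Alexandrov setting the analogous statement is proved by combining the first variation formula with the existence of antipodes. Here I would instead argue by contradiction: if $q_1,q_2\in E_k$ are close with $|\Phi(q_1)-\Phi(q_2)|\ll d(q_1,q_2)$, then a geodesic from $q_1$ towards $q_2$ is almost orthogonal to each pair $(a_i,b_i)$, measured through the Ohta-type angle comparison afforded by $S$-concavity. Extending this geodesic to an antipodal point, using $\mathrm{MCP}(0,n)$ and Busemann concavity to produce sufficiently long geodesic prolongations, would produce a $(k+1)$-th strainer pair at $q_1$, contradicting $q_1\in E_k$. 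The delicate point is to carry out this antipodal extension and verify the almost-orthogonality purely from $S$-concavity, local semi-convexity, and $\mathrm{MCP}(0,n)$, since Busemann concave spaces lack a genuine notion of angle; this is where the bulk of the technical work is expected to lie, and it dictates the precise numerical relation between the $\delta$ used to define $E_k$ and the one used to produce the contradictory $(k+1,\delta)$-strainer.
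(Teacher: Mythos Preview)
Your overall architecture matches the paper's: stratify by maximal strainer order, use the strainer map $\Phi$ to control each stratum, and produce a $(k+1)$-st strainer pair by a contradiction argument involving geodesic extension. The openness of $\mathcal{A}(n,\delta)$ and the inductive framework with a numerical loss in $\delta$ are also correct and handled in the paper essentially as you describe.

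However, your ``crucial claim'' that $\Phi|_{E_k\cap B(p,r)}$ is bi-Lipschitz for some $r>0$ is too strong, and this is where the gap lies. The geodesic prolongation you invoke from $\mathrm{MCP}(0,n)$ only says that the cut locus $T_{q_1}$ has $\mathcal{H}^n$-measure zero (Lemma~\ref{lemma:null_measure_cut_points}); it does \emph{not} guarantee that the geodesic from $q_1$ toward an arbitrary $q_2\in E_k$ extends past $q_2$. Hence your contradiction argument is available only for pairs $(q_1,q_2)$ with $q_2\in I_{q_1}$, and there is no a priori reason why $E_k\cap B(p,r)\cap I_{q_1}$ should be all of $E_k\cap B(p,r)$. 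Without handling the exceptional pairs you cannot conclude a uniform lower Lipschitz bound on $E_k\cap B(p,r)$. In fact, your bi-Lipschitz claim would immediately yield $\dim_H(E_k)\le k$, which is precisely the stronger conclusion the paper obtains \emph{only} under the additional hypothesis $S\le S_1(\delta)$ in Theorem~\ref{thm:main_result_5}; the paper is explicit that the almost-extendability argument alone cannot reach that (see the discussion preceding Lemma~\ref{lemma:hausdorff_dim_est}).

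The paper's repair is as follows. Assume $\mathcal{H}^n(E)>0$, pass to the set $K$ of density points of $E$, and observe that since $\mathcal{H}^n(K\cap B(x,r))>0$ for every $x\in K$ and $r>0$, while $\mathcal{H}^n(T_x)=0$, the set $I_x\cap K$ is \emph{dense} in $K$ for every $x\in K$. This lets one run your contradiction argument along sequences $x_j\to x$ with $x_j\in I_x\cap K$, and thereby prove the \emph{infinitesimal} lower bound
\[
\liminf_{K\ni y\to x}\frac{|\Phi(y)-\Phi(x)|}{|xy|}\ge \delta\quad\text{for every }x\in K,
\]
rather than a uniform bi-Lipschitz bound. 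One then invokes Lytchak's decomposition lemma \cite[Lemma~3.1]{lytchak2006open} to write $K$ as a countable union of sets on each of which $\Phi$ is bi-Lipschitz, giving $\dim_H(K)\le k$ and hence $\mathcal{H}^n(K)=0$, the desired contradiction. The density-point step and the passage from an infinitesimal bound to countably many bi-Lipschitz pieces are the ingredients your sketch is missing.
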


Next theorem generalizes a well-known result for finite-dimensional Alexandrov spaces with curvature bounded below, on which $\mathcal{H}^n$-almost every point admits a unique tangent cone isometric to the Euclidean space $\mathbb{R}^n$.
Under an additional geometric condition called local $p$-uniform convexity, we can prove more geometric properties of these Banach tangent cones.
For more about the connection with the work of Kell \cite{kell2019sectional}, see Remark \ref{rmk:charcter_Banach_tangent_cone_Kell}.

\begin{theorem}[Theorem \ref{thm:n_rect} and \ref{thm:Banach_tangent_cone}, Corollary \ref{cor:characterize_Banach_tangent_cone}]\label{thm:main_result_3}
    Moreover, $X$ is $n$-rectifiable, and for $\mathcal{H}^n$-almost every point in $X$ admits a unique tangent cone $(T_xX, d_x, o)$, which is isometric to a finite-dimensional Banach space.
    If $X$ further satisfies the local $p$-uniform convexity, then every Banach tangent cone possesses a strictly convex norm, and is both $2$-uniformly smooth and $p$-uniformly convex.
\end{theorem}

We also obtain a measure-theoretic estimate for singular strata of spaces.
This result, together with Theorem \ref{thm:main_result_2}, generalizes the well-known results about the Hausdorff measure and Hausdorff dimension of singular strata in Alexandrov spaces with curvature bounded below \cite{burago1992ad}.

\begin{theorem}[Theorem \ref{thm:main_thm_Hausdorff_dim_est}, Corollary \ref{cor:stratification}]\label{thm:main_result_5}
    Let $X$ is an $n$-dimensional $S$-concave, locally semi-convex, Busemann concave space with $S\geq 1$. 
    Then for any $\delta>0$ and $k\in\{1,\ldots,n\}$, the Hausdorff dimension of the singular set $X\setminus\mathcal{A}(k, \delta)$ is at most $k-1$ for all $k=1,\ldots,n$.
    In particular, $X$ admits a stratification $\{X_k\}_{k=0}^n$ such that $X$ is the disjoint union of $\{X_k\}_{k=0}^n$, with $\mathrm{dim}_H(X_k)\leq k$ for all $k=0,\ldots,n$, and the top-dimensional stratum $X_n$ is a topological $n$-manifold.
\end{theorem}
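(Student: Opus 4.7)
The plan is to adapt the Burago--Gromov--Perelman covering argument for singular strata of Alexandrov spaces to the present setting, using the uniform-smoothness constant $S$ as the quantitative replacement for Toponogov's comparison. The overall strategy proceeds in three stages: (i) upgrade the qualitative rectifiability from Theorem \ref{thm:main_result_3} to a quantitative bi-Lipschitz estimate for $k$-strainer maps whose distortion tends to $1$ as $S\to 1$; (ii) show that at a point where the $(k,\delta)$-strainer condition fails, the image under any nearby $k$-strainer map is trapped near a $(k-1)$-dimensional affine subspace; (iii) run a Vitali-type covering argument to extract the Hausdorff dimension bound, then assemble the stratification.

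For step (i), given a $(k,\delta)$-strainer $\{(a_i,b_i)\}_{i=1}^k$ at $p$, I would study the distance map $f_p=(d(a_1,\cdot),\ldots,d(a_k,\cdot))$ on a small neighbourhood $U_p$. Using the $S$-concave variant of the Busemann monotonicity \eqref{def3:intro} together with local semi-convexity, the angle-like estimates underlying Theorem \ref{thm:main_result_2} and Theorem \ref{thm:main_result_3} should be refined to show that $f_p$ is $L(S,\delta)$-bi-Lipschitz onto its image, with $L(S,\delta)\to 1$ as both $S\to 1$ and $\delta\to 0$. The key point is to track how each constant depends on $S-1$, since this is what the threshold $S_1(\delta)$ will ultimately absorb.

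For step (ii), consider a singular point $q\in X\setminus\mathcal{A}(k,\delta)$ lying close to some $p\in\mathcal{A}(n,\delta)$, which almost all points enjoy by Theorem \ref{thm:main_result_2}. The failure of $q$ to admit a $k$th $\delta$-strainer direction, together with the bi-Lipschitz control from step (i), should force $f_p(q)$ into an $\eta$-tubular neighbourhood of an affine $(k-1)$-plane in $\mathbb{R}^k$, with $\eta=\eta(S,\delta)\to 0$ as $S\to 1$. Choosing $S_1=S_1(\delta)$ so that $L(S_1,\delta)$ is close enough to $1$ and $\eta(S_1,\delta)$ is sufficiently small, a standard covering of the tubular neighbourhood then bounds the number of $\varepsilon$-balls needed to cover $(X\setminus\mathcal{A}(k,\delta))\cap U_p$ by $C\,\varepsilon^{-(k-1)}$, where doubling from Theorem \ref{thm:main_result_1} allows one to pass from the $\mathbb{R}^k$-side estimate back to $X$. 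This yields $\dim_H(X\setminus\mathcal{A}(k,\delta))\leq k-1$ for every $k$. The stratification $\{X_k\}_{k=0}^n$ is then produced by setting $X_k:=\mathcal{A}(k+1,\delta)\setminus\mathcal{A}(k+2,\delta)$ with the natural conventions at the endpoints, using a single small $\delta$ and any $S\in[1,S_1(\delta)]$; Theorem \ref{thm:main_result_2} ensures $X_n$ has full measure, while the previous dimension bound gives $\dim_H(X_k)\leq k$.

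The main obstacle lies in step (i): establishing quantitative \emph{two-sided} control on strainer maps. In Alexandrov geometry, Toponogov's comparison gives sharp two-sided angle estimates from which bi-Lipschitz distortion follows at no extra cost; in the Busemann concave setting only the one-sided monotonicity \eqref{def3:intro} is available, and the missing direction must be supplied by Ohta's $S$-concavity. This is precisely why a smallness threshold on $S$ enters the statement, and why $S_1$ must be allowed to depend on $\delta$. Making the dependence $L(S,\delta)$ explicit with the correct asymptotics as $S\to 1$---sharp enough to fit $\eta(S,\delta)$ inside a tubular neighbourhood of a $(k-1)$-plane of controlled thickness---is the technical heart of the argument and the step where the proof genuinely departs from its Alexandrov prototype.
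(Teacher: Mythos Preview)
Your step (i) contains a false premise: the $(k,\delta)$-strainer map $f_p:U\to\mathbb{R}^k$ is \emph{not} bi-Lipschitz in general. It is Lipschitz and $\varepsilon$-open, but injectivity (hence bi-Lipschitz) holds only when no point of $U$ admits a $(k+1,2\delta)$-strainer (Proposition \ref{prop:bi_Lip_k_strainer}). In dimension $n>k$ the fibres of $f_p$ are genuinely $(n-k)$-dimensional, so no choice of $S$ close to $1$ will force $L(S,\delta)\to 1$. This breaks the mechanism you rely on in step (ii): the failure of a point $q$ to be $(k,\delta)$-strained says nothing about where $f_p(q)$ lands in $\mathbb{R}^k$---the singularity is a statement about the \emph{fibre} direction of $f_p$, not about the base. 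There is no reason for the image of the singular set to concentrate near a $(k-1)$-plane; indeed $f_p$ is open, so its image is a full $k$-dimensional neighbourhood regardless of how singular the preimage is.

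The paper's route is different and avoids bi-Lipschitz control altogether. Working inductively on the stratum $\mathcal{A}(k,\delta'_k)\setminus\mathcal{A}(k+1,\delta'_{k+1})$, one covers a small ball by $\sim(r_x/r)^k$ cylindrical level-sets $D_x(r,m)$ of the $k$-strainer map and shows (Lemma \ref{lemma:hausdorff_dim_est}) that within each $D_x(r,m)$ any $\delta^{-1}r$-separated subset of the non-$(k+1)$-strained points has cardinality at most a constant $K(N,\delta)$. This yields rough dimension $\leq k$. The cardinality bound is where $S_1(\delta)$ enters: if too many separated points existed, a pigeonhole argument on the \emph{space of directions with common length} $\Sigma^l_xX$ (Definition \ref{def:space_directions_common_length}, uniformly compact by Lemma \ref{lemma:space_direction_uniform_compact}) would produce two directions at small angle of common scale; Lemma \ref{lemma:variant_S_concave} then converts this---precisely when $S$ is close enough to $1$---into a small comparison angle, manufacturing a $(k+1)$-th strainer and contradicting the assumption. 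The threshold $S_1(\delta)$ thus arises from bridging the two notions of angle, not from any bi-Lipschitz distortion estimate.
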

\begin{remark}
    Although Theorem \ref{thm:main_result_5} yields Theorem \ref{thm:main_result_2}, its proof is considerably more intricate, which includes a careful analysis of tangent cones, as well as precise control over the discrepancies between different notions of angle. 
    In contrast, the proof of Theorem \ref{thm:main_result_2} is more direct and does not need any information on tangent cones.
\end{remark}

\begin{remark}
    For $S\geq 1$ small enough,  all the Theorems \ref{thm:main_result_1}--\ref{thm:main_result_5}  remain valid  without the local semi-convexity assumption.
    This will be discussed  in \cite{ylm2026Busemann_II}, where we establish a  structure theory for finite-dimensional $S$-concave Busemann concave spaces satisfying the so-called $(\varepsilon,\delta)$-weak quadruple condition.
    This condition generalizes the classical quadruple condition (see \cite{burago2001course}) for Alexandrov spaces with curvature bounded below, and it is satisfied by all $S$-concave Busemann concave spaces with $1\leq S<s(\varepsilon)$, as well as by all finite-dimensional $2$-uniformly smooth, $\infty$-uniformly convex Banach spaces, including finite-dimensional $\ell_p^n$ spaces with $p\in [2,\infty)$.
    In particular, our framework includes finite-dimensional Alexandrov spaces with non-negative curvature. 
\end{remark}

\subsection{Main ideas and tools}
We now summarize main techniques and new ideas in our proof, and compare our methods with those employed in the structure theory of Alexandrov spaces with curvature bounded below \cite{burago1992ad}.

The main geometric tool is a variant of a classical notion, called \emph{strainer maps}, which can be interpreted as local almost orthogonal coordinate charts.
The notion of strainer map is introduced by Burago--Gromov--Perelman \cite{burago1992ad} in the context of Alexandrov spaces with curvature bounded below, and later generalized by Lytchak and Nagano \cite{lytchak2019geod} to GCBA spaces, and recently further developed by Fujioka and Gu \cite{fujioka2025top} in the setting of GNPC spaces.

Similar to \cite{fujioka2025top}, to establish strainer maps, we introduce two notions of angle to study the geometry of the underlying space: the first one, called \emph{angle viewed from a fixed point} (Definition \ref{def:angle_view_from_fixed_point}), measures the orthogonality of geodesics, while the second one, called \emph{angle of fixed scale} (Definition \ref{def:angle_of_fixed_scale}), is closely related to tangent cones.
These two notions of angle, which may not coincide in the Busemann setting (see Example \ref{example:angle_asymmetry_2}), are implicitly correlated with \emph{Euclidean comparison angle} by the uniform smoothness constant $S$ as well as the \emph{ratio of side-lengths} of geodesic triangles (see Lemma \ref{lemma:angle_well_defined} and \ref{lemma:relation_metric_and_angle}).
In Table \ref{tab:angle}, we list the notions of angle used in this paper.
For more details, we refer to Section \ref{sect:angles}.
\begin{table}[h] 
    \centering 
    \begin{tabular}{|c|c|c|} 
        \hline
        Notation & Meaning\\ 
        \hline
        $\angle px\xi$ & angle at $x$ viewed from the point $p$ along the geodesic $\xi$  \\
        \hline
        $\angle_x(\gamma(t),\eta(s))$ & angle of the geodesics $\gamma,\eta$ at $x$ of scale $t/s$\\
        \hline 
        $\tilde{\angle}_x(p,q)$ or $\tilde{\angle}pxq$ & comparison angle of the Euclidean triangle $\tilde{\Delta}pxq$ at $x$\\
        \hline
        $\angle_x((\gamma,t),(\eta,t))$ & angle quasi-metric on space of directions with common length \\
        \hline
    \end{tabular}
    \caption{Different notions of angle}\label{tab:angle}
\end{table}

Using the first notion of angle, we define strainers and strainer maps.
Our definition is adapted from the ones in \cite{burago1992ad} and \cite{fujioka2025top}, to address the non-uniqueness of geodesics between pairs of points and the lack of monotonicity of comparison angles in the Busemann setting.
To construct local almost orthogonal coordinate charts with strainer maps, we then prove the $\varepsilon$-openness and bi-Lipschitz continuity of strainer maps.  
The main challenge in our setting arises from the asymmetric nature of strainer maps (see Remark \ref{rmk:asymmetry_k_strainer}).
This challenge, which also arises in the study of Busemann spaces with non-positive curvature \cite{fujioka2025top}, stems from the inherent asymmetry of the first notion of angle, due to the non-Riemannian nature of Busemann spaces, as illustrated in Example \ref{example:angle_asymmetry_1}.
To overcome this difficulty, we adopt a strategy similar to that of \cite{fujioka2025top} by introducing an anisotropic $\ell_1$-norm for the target spaces of strainer maps.
Then we are able to establish the properties we need.

In the proof of Theorem \ref{thm:main_result_1}, a key ingredient is the self-improvement property of strainers (Lemma \ref{lemma:self_improvement_strainer}), which ensures that once a `less orthogonal' strainer map is found, one can automatically obtain a `more orthogonal one'.
Unfortunately, the `straightening strategy' to obtain improved strainers used in \cite{burago1992ad} (see also \cite[Proposition 10.8.17]{burago2001course}) is not applicable in the Busemann setting.
In fact, the new $k$-tuple $(p'_1,p_2,\ldots,p_k)$ obtained from an old strainer $(p_1,p_2,\ldots,p_k)$ by straightening procedure is no more a strainer, due to the asymmetry of strainers and angles in the Busemann setting.
To overcome this difficulty, we borrow a strategy from computer science, known as `dequeuing and enqueuing'\footnote{In computer science terminology, to dequeue means to remove the first element from a queue, and to enqueue means to add a new element to the end of the queue.}.
This method allows us to improve the orthogonality of strainers step by step, ultimately leading to the desired self-improvement property.

To estimate the Hausdorff measure of almost regular sets (Theorem \ref{thm:main_result_2}), we employ an approach substantially different from the one used for Alexandrov spaces \cite[Theorem 10.6]{burago1992ad}, in which the authors make use of the compactness of the space of directions at a point, to get an upper bound for the maximal cardinality of $r$-separated subsets within singular sets (see \cite[Lemma 10.5]{burago1992ad}).
This method relies implicitly on the metric cone structure of tangent cones in Alexandrov spaces, which is not applicable directly in our context.
Instead, we adopt a more straightforward method, motivated by \cite{lytchak2019geod} studying GCBA spaces, leveraging the {\em almost extendable property} of geodesics established in Lemma \ref{lemma:null_measure_cut_points}.
Such a property is fulfilled by metric measure spaces satisfying the measure contraction property \cite{von2008local}.
This approach enables us to analyze the infinitesimal behavior of strainer maps when restricted to the singular strata without any knowledge about the structure of tangent cones, thereby allowing us to determine the top-dimensional Hausdorff measure of singular sets.
It is worth mentioning that we do not need the so-called \emph{local geodesic extension property} (see \cite[Definition II.5.7]{bridson1999metric}).
This geometric property, which is intimately connected to the local geometry and topology of the underlying spaces (see, for example, \cite[Theorem 1.5]{lytschak2004affine}), is much stronger than the almost extendable property.

Based on Theorem \ref{thm:main_result_2}, the $n$-rectifiability of $X$ follows from the local bi-Lipschitz property of strainer maps and a standard covering argument.
The existence and uniqueness of Banach tangent cones at almost every point in $X$, is a consequence of a well-known structure theorem of Kirchheim, concerning rectifiable sets in metric spaces \cite[Theorem 9]{kirchheim1994rectifiable}.

Finally, to estimate the Hausdorff dimension of singular strata (Theorem \ref{thm:main_result_5}), the main difficulty is the absence of a metric cone structure for tangent cones. 
We cannot apply the technique used in Theorem \ref{thm:main_result_2} either, as the almost extendable property of geodesics only holds up to null top-dimensional Hausdorff measure sets, which prevents us from obtaining finer estimate for the Hausdorff dimension of singular strata.
To overcome these difficulties, we introduce a new notion called \emph{space of directions with common length} (Definition \ref{def:space_directions_common_length}).
While in general no angle metric exists as in spaces of directions in Alexandrov spaces, the space of directions with common length carries a natural {\em angle quasi-metric} (see Definition \ref{def:space_directions_common_length}) induced by the Euclidean law of cosines on tangent cones, which coincides with the second notion of angle in a dense subset of it.
We then establish the uniform compactness of spaces of directions with common length (Lemma \ref{lemma:space_direction_uniform_compact}) in the sense of maximal cardinality of $\varepsilon$-separated subsets, and provide a technical lemma (Lemma \ref{lemma:variant_S_concave}) that quantifies how the ratio of side-lengths in angles of fixed scale controls the its discrepancy with angles viewed from a fixed point.
This quantification is in fact closely connected to a strengthened doubling condition called the \emph{$\mathrm{ATB}$-condition} introduced by Lebedeva--Ohta--Zolotov \cite{lebedeva2021self}.
This lemma, together with the uniform compactness of spaces of directions with common length, allows us to adapt a strategy similar to that of \cite[Lemma 10.5]{burago1992ad}, to derive an upper bound for the maximal cardinality of $r$-separated subsets of singular sets within small cylindrical regions (Lemma \ref{lemma:hausdorff_dim_est}), thereby enabling us to determine the Hausdorff dimension of singular strata.

\subsection{Organization of the paper}
The paper is organized as follows: in Section \ref{sect:prelimin_notations}, we introduce basic terminologies and notations used throughout the paper.
In Section \ref{sect:S_concave_local_squared_convex_Busemann_concave}, we introduce several geometric conditions, including the $S$-concavity, local semi-convexity and Busemann concavity, and gives some examples.
In Section \ref{sect:angles}, we introduce two notions of angle and establish several key properties, including the almost comparison inequalities.
We further define tangent cones and spaces of directions with common length, and prove several properties of tangent cones and spaces of directions with common length.
We then study strainers and strainer maps in Section \ref{sect:strainer_maps}, and prove several important properties including $\varepsilon$-openness, bi-Lipschitz continuity, and self-improvement property.
In Section \ref{sect:dimension}, we introduce strainer numbers and prove Theorem \ref{thm:main_result_1}.
In Section \ref{sect:Hausdorff_measure_strata_main}, we investigate the structure and measure-theoretic properties of Busemann concave spaces, and prove Theorem \ref{thm:main_result_2}--\ref{thm:main_result_5}.

\section{Preliminaries and notations}\label{sect:prelimin_notations}
\noindent
In this section, we briefly recall the terminologies and notations used in this manuscript.
Our notations are standard and mainly follows from \cite{burago2001course,burago1992ad,shanmugalingam2015sobolev,ambrosio2004topics}.

\subsection{Spaces and maps}
Throughout this paper, we denote by $(X,d)$ a complete, separable metric space with the distance function $d$.
Given two points $x, y \in X$, we often denote their distance $d(x,y)$ by $|xy|$ for simplicity.
For $r > 0$, we denote by $B(x,r)$ and $\bar{B}(x,r)$ the open and closed balls centered at $x$ with radius $r$, respectively.

Given a subset $E \subset X$ and $r > 0$, we say that $E$ is \emph{$r$-separated} if every pair of distinct points $x, y \in E$ satisfies $|xy| \geq r$.
An $r$-separated subset $E$ is called \emph{maximal} if there does not exist any $r$-separated subset $E' \subset X$ that properly contains $E$.
For a set $E$, we denote by $\beta_E(r) \in \mathbb{N} \cup \{\infty\}$ the largest possible cardinality of a maximal $r$-separated subset of $E$.
A metric space $(X, d)$ is said to be \emph{doubling}, or \emph{$N$-doubling}, if for any $r > 0$, the cardinality of any maximal $r/2$-separated subset of any ball $B(x, r)$ is at most $N$, i.e., $\beta_{B(x, r)}(r/2) \leq N$ for all $x \in X$ and $r > 0$.
Equivalently, any ball $B(x, r) \subset X$ can be covered by at most $N$ balls of radius $r/2$.

For a curve $\gamma \subset X$, we denote its length by $l(\gamma)$.
A curve $\gamma:[0,1] \to X$ is called a \emph{constant-speed geodesic} from $x$ to $y$ if it is a length-minimizing curve connecting $x$ and $y$ satisfying that $|\gamma(t)\gamma(s)| = |t-s|\,|xy|$ for all $t, s \in [0,1]$.
A length-minimizing curve is called \emph{unit-speed geodesic} or \emph{geodesic} for short, if it is parametrized by arc-length.
A metric space $(X,d)$ is said to be \emph{geodesic} if any pair of distinct points can be connected by a geodesic.
It is said to be \emph{uniquely geodesic} if any pair of distinct points can be connected by a unique geodesic.
A geodesic space $(X,d)$ is said to be non-branching if for any pair of constant-speed geodesics $\gamma,\eta:[0,1]\to X$, the condition $\gamma|_{[0,t]}=\eta|_{[0,t]}$ for some $t\in (0,1)$ implies that $\gamma=\eta$.
We say a function $f:X\to \mathbb{R}$ is convex if $f\circ \gamma$ is convex on $[0,1]$ for any constant-speed geodesic $\gamma$.

For a geodesic space $(X,d)$, we denote by $\Delta xyz$ a \emph{geodesic triangle} on $X$ with vertices $x,y,z\in X$ which is the union of three geodesics which connect $x,y,z$ pair-wisely.
A triangle $\tilde{\Delta}xyz \subset \mathbb{R}^2$ is said to be an Euclidean comparison triangle of $\Delta xyz\subset X$ if $|xy|=|\tilde{x}\tilde{y}|,|yz|=|\tilde{y}\tilde{z}|$ and $|xz|=|\tilde{x}\tilde{z}|$.
We denote the angle at vertex $\tilde{x}$ of the Euclidean comparison triangle $\tilde{\Delta}xyz$ by $\tilde{\angle}_x(y,z)$, or $\tilde{\angle}yxz$ for simplicity.

A map $F:X\to Y$ between two metric spaces $(X,d_X)$ and $(Y,d_Y)$ is called \emph{Lipschitz} or \emph{$L$-Lipschitz} if there is $L>0$ such that $|F(x)F(y)|\leq L|xy|$ for any $x,y\in X$ \footnote{For different metric space, we use the same notation $|xy|$ and $|F(x)F(y)|$ to denote the distance $d_X(x,y)$ and $d_Y(F(x),F(y))$ respectively, if there is no ambiguity.}.
It is called \emph{$L$-biLipschitz} for some  $L\geq 1$ if $|xy|/L\leq |F(x)F(y)|\leq L|xy|$ for any $x,y\in X$.

\subsection{Hausdorff measure, dimensions and rectifiability}\label{subsect:H_measure_dimensions_rect}

Let $(X,d)$ be a complete, separable metric space.
For $\delta>0, \alpha\in [0,\infty)$ and $E\subset X$, $\mathcal{H}_{\delta}^{\alpha}(E)$ is defined as
\begin{equation}
    \mathcal{H}^{\alpha}_{\delta}(E):=\inf\left\{\omega(\alpha)\sum_{i=1}^{\infty}\left(\frac{\mathrm{diam}(E_i)}{2}\right)^{\alpha}: E\subset \bigcup_{i=1}^{\infty}E_i, \mathrm{diam}(E_i)\leq \delta  \right\},
\end{equation}
where $\omega_{\alpha}:=2^{-\alpha}\pi^{\alpha/2}/\Gamma(\alpha/2+1)$ and $\Gamma$ is the Gamma function.
The \emph{$\alpha$-Hausdorff measure} $\mathcal{H}^{\alpha}(E)$ of $E$ is defined as $\mathcal{H}^{\alpha}(E):=\sup_{\delta>0}\mathcal{H}^{\alpha}_{\delta}(E)$.
We also use $\mathcal{H}^{\alpha}_d$ to emphasize the metric $d$.

It is known that $\alpha$-Hausdorff measures on metric spaces are outer measures, and moreover Borel regular measures; see for example \citep[Theorem 2.1.4]{ambrosio2004topics}.
Furthermore, $\alpha$-Hausdorff measures are also Radon measures on complete metric spaces, see \citep[Proposition 3.3.44]{shanmugalingam2015sobolev}.
For $n\in \mathbb{N}$, the $n$-Hausdorff measure on $\mathbb{R}^n$ coincides with the standard $n$-dimensional Lebesgue measure on $\mathbb{R}^n$.

The \emph{Hausdorff dimension} $\mathrm{dim}_H(E)$ of a subset $E$ is the infimum of numbers $\alpha>0$ such that $\mathcal{H}^{\alpha}(E)=0$ if such numbers exist.
Otherwise, we say the Hausdorff dimension of $E$ is infinite.
For an $L$-Lipschitz map $F:X\to Y$, it can be seen from the definition that $\mathcal{H}^{\alpha}(F(E))\leq L^{\alpha}\mathcal{H}^{\alpha}(E)$.
In particular, the Hausdorff dimension is invariant under bi-Lipschitz homeomorphisms.
The Hausdorff dimension is monotone in the sense that $\mathrm{dim}_H(E)\subset \mathrm{dim}_H(E')$ for each measurable subset $E\subset E'\subset X$.
Furthermore, if $\{E_i\}_i$ is an at most countable covering of $X$, then $\mathrm{dim}_H(X)=\sup_{i}\mathrm{dim}_H(E_i)$.
For more discussions about Hausdorff dimension, we refer to \cite[Section 1.7.4]{burago2001course}.

The \emph{rough dimension} $\mathrm{dim}_r(E)$ of a set $E$ is defined as the infimum of numbers $\alpha>0$ such that $\limsup_{r\searrow 0}r^{\alpha}\beta_{E}(r)=0$ if such numbers exist, otherwise we take the rough dimension of $E$ equal infinite. 
It is well known that the topological dimension (Lebesgue covering dimension), Hausdorff dimension, and rough dimension satisfy the following relationship:
\begin{equation}
    \mathrm{dim}_T(E)\leq \mathrm{dim}_H(E)\leq \mathrm{dim}_r(E),\quad \text{for any }E\subset X.
\end{equation}
See \citep[Theorem 8.14]{heinonen2001lectures} and \citep[Section 10.6.4]{burago2001course}.

A metric space $(X,d)$ is said to be \emph{$n$-rectifiable} for $n\in \mathbb{N}$ if it can be covered, up to an $\mathcal{H}^n$-null set, by countably many Lipschitz images of $\mathbb{R}^n$, i.e., there exists a countable family of Lipschitz maps $f_i:E_i\to X$ defined on measurable subsets $E_i\subset \mathbb{R}^n$ such that $\mathcal{H}^n(X\setminus \cup_{i=1}^{\infty}f_i(E_i))=0$.
For more details on rectifiability of metric spaces, we refer to \cite{kirchheim1994rectifiable,ambrosio2000rectifiable,bate2017characterizations}.

\subsection{Gromov--Hausdorff convergence and blow-ups}
In this subsection, we recall the definitions of pointed Gromov--Hausdorff convergence and blow-ups of pointed metric spaces.
More details and equivalent definitions of Gromov--Hausdorff convergence can be found in \cite{burago2001course,shanmugalingam2015sobolev,guy_c_david2015GAFA,bate2022characterising}.

Given two pointed metric spaces $(X,d_X), x$ and $(Y,d_Y,y)$ and $\varepsilon>0$, a map $F:(X,d_X,x)\to (Y,d_Y,y)$ is called an \emph{$\varepsilon$-isometry} if it satisfies the following three conditions:
\begin{enumerate}
    \item $F(x)=y$;
    \item for any $z,z'\in B(x,1/\varepsilon)\subset X$, it holds that $\big||zz'|- |f(z)f(z')| \big|<\varepsilon$;
    \item for any $r<1/\varepsilon$, $B(y,r-\varepsilon)$ is included in the $\varepsilon$-neighborhood of $f(B(x,r))$.
\end{enumerate}

Given a sequence $\{(X_n,d_n,x_n)\}_{n}$ of pointed metric spaces, we say that $(X_n,d_n,x_n)$ pointed Gromov--Hausdorff converges to a pointed metric space $(X,d,x)$, if for any $\varepsilon>0$, there exists an $N_0:=N_0(\varepsilon)\in \mathbb{N}$ such that for any $n\geq N_0$, we can find $\varepsilon$-isometries $F_n$ from $(X_n,d_n,x_n)$ to $(X,d,x)$.
It is known that if a sequence of pointed metric spaces whose doubling constants are uniformly bounded above, then it has a subsequence converging to a pointed metric space with the same doubling constant bound, in the pointed Gromov--Hausdorff topology, see for example \cite[Proposition 2.2]{gigli2015euclidean}.

For a pointed metric space $(X,d,x)$ and a positive number $\lambda\in (0,1]$, we call the rescaled space $(X, d/\lambda, x)$ a \emph{blow-up} of $X$ at $x$.
We denote by $\mathrm{Tan}(X,d,x)$ the collection of all pointed Gromov--Hausdorff limits of the blow-ups $\{(X,d/\lambda_n,x)\}$ at $x$ for some $\{\lambda_n\}\subset (0,1]$ converging to $0$.
If $(X,d)$ is doubling, we can see that $\mathrm{Tan}(X,d,x)$ is non-empty.

\subsection{Uniform convexity and smoothness}
In this subsection, we recall the uniform convexity and smoothness of normed spaces.
See \cite{ohta2021comparison} for more details.

Let $(E, \|\cdot\|)$ be a normed space and $p\in (1,\infty)$.
We say $E$ is \emph{$p$-uniformly convex} if there exists a constant $C\geq 1$ such that for any $u,v\in E$, it holds
\begin{equation}\label{eq:def_p_uniform_convex}
    \left\| \frac{u+v}{2}\right\|^p
    \leq
    \frac{1}{2}\|u\|^p + \frac{1}{2}\|v\|^p - \frac{1}{4C}\| u-v\|^p.
\end{equation}
The least constant $C$ satisfying \eqref{eq:def_p_uniform_convex} is called the \emph{uniform convexity constant}.   
This characterizes the convexity of the norm in a quantitative way.
Similarly, we say that $E$ is \emph{$p$-uniformly smooth} if there exists a constant $S\geq 1$ such that for any $u,v\in E$, it holds
\begin{equation}\label{eq:def_p_uniform_smooth}
    \left\| \frac{u+v}{2}\right\|^p
    \geq
    \frac{1}{2}\|u\|^p + \frac{1}{2}\|v\|^p - \frac{S}{4}\|u-v\|^p.
\end{equation}
The least constant $S$ satisfying \eqref{eq:def_p_uniform_smooth} is called the \emph{uniform smoothness constant}, which characterizes the concavity of the norm.
Note that any normed space $(E,\|\cdot\|)$ satisfies the inequality $\|(u+v)/2\|^p\leq \frac{1}{2}\|u\|^p + \frac{1}{2}\|v\|^p$ for all $u,v\in E$, due to the convexity of norm and function $t\mapsto t^p$.

For a normed space $(\mathbb{R}^n, \|\cdot\|)$, it is well-known that the norm $\|\cdot\|$ is induced by an inner product if and only if $(\mathbb{R}^n, \|\cdot\|)$ is $2$-uniformly convex with $C = 1$, which is also equivalent to $(\mathbb{R}^n, \|\cdot\|)$ being $2$-uniformly smooth with $S = 1$.
Furthermore, it can be deduced from the Clarkson's inequality (see, for example, \cite{ball1994sharp}) that any finite dimensional $\ell^n_p:=(\mathbb{R}^n, \|\cdot\|_p)$-space with $p\in [2,\infty)$ is $p$-uniformly convex with $C=2^p/4$ as well as being $2$-uniformly smooth with $S=p-1$.
By duality, any finite dimensional $\ell^n_q$-space with $q\in (1,2]$ is $2$-uniformly convex with $C=(q-1)^{-1}$ as well as being $q$-uniformly smooth with $S=4/2^q$.
Note that among $\ell^n_p$-spaces, only $\ell^n_2$-space is both $2$-uniformly convex and $2$-uniformly smooth.
See \cite[Section 1.2]{ohta2021comparison} and \cite{ohta2009uniform,kuwae2014jensen} for more details.    
For infinite dimensional $L_p$-spaces, similar properties also hold due to the Beckner's inequality.

\section{S-concavity, local semi-convexity and Busemann concavity}\label{sect:S_concave_local_squared_convex_Busemann_concave}
\noindent
In this section, we introduce $S$-concavity, local semi-convexity, and Busemann concavity for geodesic spaces.
These synthetic sectional curvature bounds for metric spaces, have been well-studied by Ohta \cite{ohta2009uniform,ohta2021comparison}, Kell \cite{kell2019sectional}, et al. (see for example \cite{kann1960bonnet}), as metric generalizations of curvature bounds for Finsler manifolds. 
For other synthetic notions of sectional curvature bounds, such as Busemann's notion of non-positive curvature, we refer to \cite{busemann1948spaces,papadopoulos2014metric,ivanov2019rigidity} and the recent work \cite{fujioka2025top}.

\begin{definition}[$S$-concave]\label{def:S_concave}
 We say that a complete geodesic space $(X,d)$ is \emph{$S$-concave} for $S \geq 1$, or semi-concave for short, if for any point $p \in X$ and any constant-speed geodesic $\xi:[0,1] \to X$, it holds that
    \begin{equation}\label{eq:S-concave}
        \left|p\xi(t)\right|^2
        \geq
        (1-t)\left|p\xi(0)\right|^2 + t\left|p\xi(1)\right|^2 - S\,t(1-t)\left|\xi(0)\xi(1)\right|^2,
    \end{equation}
    for any $t\in [0,1]$. 
    If we only require the above inequality to hold for curves satisfying $\sup_{t\in [0,1]}|p\xi(t)|<D$ for some $D>0$, then we call $(X, d)$ is locally $S$-concave or locally semi-concave.
\end{definition}
\begin{remark}
    The $S$-concavity condition \eqref{eq:S-concave} is equivalent to saying that the function $t\mapsto |p\xi(t)|^2 - St^2|\xi(0)\xi(1)|^2$ is concave on $[0,1]$ for any constant-speed geodesic $\xi$.
    Furthermore, no geodesic space can be $S$-concave for $S<1$, except for the trivial case when $X$ is a singleton, see \cite[Section 8.3]{ohta2021comparison}.
\end{remark}

The $S$-concavity condition is a metric generalization of $2$-uniform smoothness of Banach spaces. 
It characterizes the infinitesimal concavity of underlying spaces. 
Such a geometric condition plays an important role in the study of the geometry of Banach spaces, metric spaces and Wasserstein spaces (see, for example, \cite{ohta2009uniform,ohta2007convexities,ohta2009gradient}).
For example, it is known that $1$-concave spaces are exactly Alexandrov spaces with non-negative curvature.  
In the setting of Finsler geometry, it has been proved by Ohta \cite[Theorem 4.2]{ohta2009uniform} (see also \cite[Corollary 8.20]{ohta2021comparison}) that any complete Berwald space with non-negative flag curvature and the uniform smoothness constant bounded above by $S$ is $S$-concave in the sense of Definition \ref{def:S_concave}.
It is clear that any $2$-uniformly smooth Banach space with the uniform smoothness constant $S_F\leq S$ is $S$-concave.
In particular, for any $p\geq 2$, the $\ell^n_p$-space is $S$-concave with $S=p-1$.

\begin{definition}[Locally semi-convex]\label{def:squared_convex}
We say that a complete geodesic space $(X,d)$ is \emph{$(C, D)$-semi-convex}, or locally semi-convex for simplicity, if there are $C\geq 0, D>0$ such that for any point $p \in X$ and any constant-speed geodesic $\xi:[0,1] \to X$ satisfying $\sup_{t\in [0,1]}|p\xi(t)|<D$, it holds 
    \begin{equation}\label{eq:squared-convex}
        \left|p\xi(t)\right|^2
        \leq
        (1-t)\left|p\xi(0)\right|^2 + t\left|p\xi(1)\right|^2 + C t (1-t)\left|\xi(0)\xi(1)\right|^2,\quad \text{for any }t\in [0,1].
    \end{equation}
 If $C=0$, we say that $(X,d)$ is \emph{locally convex}.
\end{definition}

It is clear that uniquely geodesic Banach spaces are surely locally convex.
So, locally semi-convex spaces are generalizations of Banach spaces, in a quantitative and non-linear way.  
Note that there is a stronger convexity condition introduced by Ohta \cite{ohta2007convexities,ohta2009uniform}, called \emph{$k$-convexity}, which asks for the following stronger inequality:
\begin{equation}\label{eq:k_convexity_ohta}
    \left|p\xi(t)\right|^2
    \leq
    (1-t)\left|p\xi(0)\right|^2 + t \left|p\xi(1)\right|^2 - k\,t(1-t) \left|\xi(0)\xi(1)\right|^2,\quad k>0,
\end{equation}
for any point $p$ and any constant-speed geodesic $\xi:[0,1]\to X$.
This stronger condition has been studied in the setting of Finsler geometry by Ohta himself \cite[Section 8.3]{ohta2021comparison}, in which it is shown that any forward complete Finsler manifold with flag curvature bounded above by $\kappa\geq 0$ with vanishing $T$-curvature and the uniform convexity constant bounded from above by $C\geq 1$ satisfies the following inequality:
\begin{equation}
    \lim_{t\to 0}\frac{|p\xi(-t)|^2+|p\xi(t)|^2-2|p\xi(0)|^2}{2t^2}
    \geq
    C^{-1}\frac{\sqrt{\kappa}r\cos(\sqrt{\kappa}r)}{\sin(\sqrt{\kappa}r)},
\end{equation}
for any point $p$ and unit-speed geodesic $\xi:[-\varepsilon,\varepsilon]\to X$ with $r:=|p\xi(0)|<\pi/\sqrt{\kappa}$.
In particular, any complete, simply connected Berwald space with non-positive flag curvature an the uniform convexity constant bounded from above by $C\geq 1$, is $C^{-1}$-convex in the sense of \eqref{eq:k_convexity_ohta}.

\begin{definition}[Busemann concave]\label{def:Busemann_concave}
    A complete geodesic space $(X,d)$ is called Busemann concave if for any pair of constant-speed geodesics $\gamma, \eta: [0,1]\rightarrow X$ starting from a common point $\gamma(0)=\eta(0)$, the function
    \begin{equation}\label{eq:Busemann_monotone_1}
        t\mapsto \frac{d(\gamma(t), \eta(t))}{t}
    \end{equation}
    is non-increasing on $[0,1]$.
\end{definition}
\begin{remark}\label{rmk:Busemann_concavity_via_comparison_triangle}
    Busemann concave spaces can be defined equivalently by comparison triangles, in a similar way as Alexandrov spaces of non-negative curvature.
    More precisely, for two constant-speed geodesics $\gamma,\eta:[0,1]\rightarrow X$ from $x$ to $y,z$ respectively, the Busemann concavity \eqref{eq:Busemann_monotone_1} asks for the following comparison inequality:
    \begin{equation}\label{eq:Busemann_concavity}
        \left|\gamma(t)\eta(t)\right|\geq \left|\tilde{\gamma}(t)\tilde{\eta}(t)\right|,\quad \text{for all }t\in [0,1],
    \end{equation}
    where $\tilde{\gamma},\tilde{\eta}:[0,1]\to \mathbb{R}^2$ are the edges of Euclidean comparison triangle $\tilde{\Delta}xyz$ from $\tilde{x}$ to $\tilde{y}$ and $\tilde{z}$ respectively.
    We remark that, however, compared to the Busemann concavity, the comparison condition for Alexandrov spaces is stronger since it requires $|\gamma(t)\eta(s)|\geq |\tilde{\gamma}(t)\tilde{\eta}(s)|$ for any $t,s\in [0,1]$, rather than only for $t=s$.
\end{remark}

It follows directly from the definition that Busemann concave spaces are non-branching.
One can also see that Banach spaces with strictly convex norms and Alexandrov spaces of non-negative curvature are Busemann concave.
For further constructions and examples of Busemann concave spaces, we refer the readers to \cite[Section 2]{kell2019sectional}.

In the following, we provide some basic examples of spaces that satisfy all three conditions introduced above.
\begin{example}
    The following are $S$-concave, locally convex Busemann concave spaces:
    \begin{enumerate}
        \item Any $2$-uniformly smooth Banach space with strictly convex norm and uniform smoothness constant $S_F\leq S$ is an $S$-concave, locally convex Busemann concave space.
        In particular, for any $p\in [2,\infty)$,  $\ell_p, \ell^n_p$ and $L_p$-space are such kind of spaces with uniform smoothness constant $S_F=p-1$.

        \item A geodesic space satisfying both $\mathrm{CBB}(0)$ and $\mathrm{CAT}(\kappa)$ conditions for some $\kappa \geq 0$ is a $1$-concave, locally convex Busemann concave space.
            
        \item The product space $Y := X \times E$, where $(X, d_X)$ is a geodesic space satisfying both the $\mathrm{CBB}(0)$ and $\mathrm{CAT}(\kappa)$ conditions for some $\kappa > 0$, and $(E, \|\cdot\|_E)$ is a $2$-uniformly smooth Banach space with strictly convex norm and the uniform smoothness constant $S > 1$, equipped with the product metric $d_Y = \sqrt{d_X^2 + \|\cdot\|_E^2}$, is also an $S$-concave, locally convex Busemann concave space.
        Note that in this case, $Y$ is not an Alexandrov space with curvature bounded below.
    \end{enumerate}
\end{example}
\begin{remark}[Berwald spaces]
    While one can show that Berwald spaces with flag curvature bounded below satisfy the Busemann concavity locally, it is unknown whether all such Berwald spaces satisfy the Busemann concavity globally, as pointed out by Kell \cite[Section 2.1]{kell2019sectional}.
    In contrast, it has been shown in \cite{kozma2004dispersing} that among connected complete Berwald spaces with absolutely homogeneous Finsler metric, non-positive flag curvature is equivalent to the global Busemann convexity (see \cite{kristaly2006metric} for equivalence between general Berwald spaces with non-positive flag curvature and the local Busemann convexity).
    We refer to \cite{kell2015Berwald} for more discussions on non-negatively curved Berwald spaces and to \cite{fujioka2025top} for discussions on the Busemann convexity.
\end{remark}

Finally, we emphasize that $S$-concave, locally semi-convex Busemann concave spaces are very different from the GNPC spaces studied in \cite{fujioka2025top}.
While these spaces are non-branching, they do not necessarily possess the local geodesic extension property, which serves as a key assumption in the study of GCBA spaces \cite{lytchak2019geod} and GNPC spaces \cite{fujioka2025top}.
This geometric condition, which requires that any geodesic segment can be extended as a local geodesic beyond its endpoints, is intimately related to the local homology at a point (see \cite[Theorem 1.5]{lytschak2004affine}), and provides superior control over both the local geometry and topological regularity of underlying spaces.
Furthermore, any pair of points in our spaces may be connected by multiple geodesics, which is in contrast to the uniquely geodesic property of GNPC spaces.

\section{Angles}\label{sect:angles}
\noindent
In this section, we introduce two notions of angle: one is well-defined on $S$-concave and locally semi-convex spaces, and one is well-defined on Busemann concave spaces. 
We then establish several fundamental properties of these angles.

The first notion of angle, referred to as \emph{angle viewed from a fixed point}, measures the orthogonality of geodesics, which serves as a key ingredient for establishing strainers and strainer maps.
The second notion of angle, referred to as \emph{angle of fixed scale}, is related to the infinitesimal structure of the underlying space and is fundamental to developing the notion of tangent cone.

Generally, there is no direct link between these two notions of angle, except on Alexandrov spaces.
This difference, as well as the asymmetry of angles, is rooted in the non-Riemannian nature of the spaces and brings a considerable challenge.

\subsection{Angles viewed from a fixed point}
In this subsection, we define the notion of angle viewed from a point, in $S$-concave and locally semi-convex spaces. 
We show that this notion of angle is well-defined in both contexts, and possesses fine properties for constructing `almost orthogonal coordinates', namely, strainer maps.

\begin{definition}[Angles viewed from a fixed point]\label{def:angle_view_from_fixed_point} 
    Let $(X,d)$ be an $S$-concave geodesic space for some $S\geq 1$ and $p\in X$ be a point.
    Let $x\in X$ be a point different from $p$ and let $\xi$ be a unit-speed geodesic from $x$ to $y\in X$.
    The angle $\angle px\xi$ is defined as
    \begin{equation}\label{eq:angle_view_from_fixed_point}
        \angle px\xi:=\lim_{t\searrow 0}\tilde{\angle}px\xi (t),
    \end{equation}
    where $\tilde{\angle}px\xi(t)$ is the angle at ${x}$ of the Euclidean comparison triangle $\tilde{\Delta}px\xi(t)$.
    We call $\angle px \xi$ the \emph{angle viewed from $p$ at $x$ towards $y$, along the geodesic $\xi$}.
\end{definition}
\begin{remark}
    Our notation $\angle px\xi$ is slightly different from the one used in \cite{fujioka2025top} for GNPC spaces, due to the possibly non-uniqueness of geodesics joining $x$ and $y$ in general.
\end{remark}

The lack of monotonicity for comparison angles in $S$-concave spaces makes it non-trivial to determine whether the limit in \eqref{eq:angle_view_from_fixed_point} exists or not.
Next, we show that, under the $S$-concavity, $\angle px\xi$ is indeed well-defined and dominates $\tilde{\angle}px\xi(t)$ up to an explicit error term. 

\begin{lemma}\label{lemma:angle_well_defined}
    Let $(X,d)$ be an $S$-concave geodesic space with $S\geq 1$.
    Let $p,x\in X$ be two different points and $\xi$ be a unit-speed geodesic starting from $x$.
    \begin{enumerate}[fullwidth]
        \item The angle $\angle px\xi$ is well-defined and satisfies the following almost comparison inequality:
        \begin{equation}\label{eq:angle_almost_comparison}
            \tilde{\angle}px\xi(t) \leq \angle px\xi + \delta_S(t;|px|),\quad t\in [0,t_0],
        \end{equation}
        where $\delta_S(t;|px|):=\arccos(1-\frac{(S-1)t}{2|px|})$ is a non-negative continuous function defined on $t\in [0,t_0]$ satisfying $\delta_S(t)\rightarrow 0$ as $t\rightarrow 0$.
        Here $t_0>0$ is a constant depending only on the distance $|px|$, the constant $S$ and the length $l(\xi)$.
        
        \item The function $t\mapsto |p\xi(t)|$ is differentiable at $t=0$ and satisfies
        \begin{equation}
            \frac{d}{dt} \bigg|_{t=0}\left|p\xi(t)\right|
            =
            -\cos\angle px\xi.
        \end{equation}
    \end{enumerate}    
\end{lemma}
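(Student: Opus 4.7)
The plan is to extract the angle as a one-sided derivative of the distance function and then upgrade the resulting scalar estimate into the claimed angular bound. Set $a := |px|$, $b(t) := |p\xi(t)|$ and $g(t) := b(t)^2$. By reparametrizing $\xi|_{[0,T]}$ to constant speed on $[0,1]$ and applying \eqref{eq:S-concave} on each such sub-interval, one checks that $h(t) := g(t) - S t^2$ is concave on the domain of $\xi$. The triangle inequalities $(a-t)^2 \leq b(t)^2 \leq (a+t)^2$ (valid for $t \leq a$) bound the difference quotients of $h$ near zero, so $h'_+(0)$ exists and is finite; since $t \mapsto S t^2$ is smooth, this gives $g'_+(0) = h'_+(0)$, and since $b(0) = a > 0$, the chain rule yields $b'_+(0) = g'_+(0)/(2a) \in [-1,1]$.

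I would then rewrite the law of cosines in the form
\begin{equation*}
    \cos\tilde{\angle}px\xi(t) = \frac{a^2 + t^2 - b(t)^2}{2at} = \frac{t}{2a} - \frac{b(t)+a}{2a}\cdot\frac{b(t)-a}{t}
\end{equation*}
and pass to the limit $t \searrow 0$; using $b(t) \to a$ and the existence of $b'_+(0)$ this gives $\lim_{t \searrow 0} \cos\tilde{\angle}px\xi(t) = -b'_+(0)$. Defining $\angle px\xi := \arccos(-b'_+(0)) \in [0,\pi]$ then makes \eqref{eq:angle_view_from_fixed_point} well-defined and immediately establishes part (2), namely $\frac{d}{dt}\big|_{t=0} |p\xi(t)| = b'_+(0) = -\cos\angle px\xi$.

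For the almost-comparison inequality in part (1), the tangent estimate $h(t) \leq h(0) + h'_+(0)\, t$ coming from the concavity of $h$ is equivalent to $b(t)^2 \leq a^2 + 2a b'_+(0)\, t + St^2$. Substituting into the law of cosines produces
\begin{equation*}
    \cos\tilde{\angle}px\xi(t) \geq \cos\angle px\xi - \frac{(S-1)t}{2a} = \cos\angle px\xi + \cos\delta_S(t;a) - 1,
\end{equation*}
since $\cos\delta_S(t;a) = 1 - (S-1)t/(2a)$ by definition. The algebraic identity $\cos\alpha + \cos\beta - 1 - \cos(\alpha+\beta) = 4\sin(\alpha/2)\sin(\beta/2)\cos((\alpha+\beta)/2)$ then shows that $\cos\alpha + \cos\beta - 1 \geq \cos(\alpha+\beta)$ whenever $\alpha, \beta \in [0,\pi]$ with $\alpha + \beta \leq \pi$; applied to $\alpha = \angle px\xi$ and $\beta = \delta_S(t;a)$, and combined with the monotonicity of $\arccos$, this upgrades the previous bound to \eqref{eq:angle_almost_comparison} in the regime $\angle px\xi + \delta_S(t;a) \leq \pi$, while in the complementary regime the inequality holds trivially from $\tilde{\angle}px\xi(t) \leq \pi$. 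Taking $t_0 := \min\{l(\xi),\, a,\, 4a/(S-1)\}$ (omitting the last term when $S = 1$) keeps the comparison triangle non-degenerate and $\delta_S(t;a)$ inside the domain of $\arccos$ throughout. The main obstacle is precisely this last translation: $S$-concavity only controls $\cos\tilde{\angle}px\xi(t)$ from below by a linear perturbation of order $t$, and converting it into the claimed angular inequality of the form $\tilde{\angle}px\xi(t) \leq \angle px\xi + \delta_S(t;a)$ relies on the case-dependent trigonometric identity above, which also explains the seemingly ad hoc shape of $\delta_S(t;a) = \arccos(1 - (S-1)t/(2a))$.
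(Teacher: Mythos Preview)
Your proof is correct and follows essentially the same route as the paper: both arguments exploit the concavity of $t\mapsto |p\xi(t)|^2 - St^2$ to produce the one-sided derivative and the key cosine lower bound $\cos\tilde{\angle}px\xi(t)\geq \cos\angle px\xi - (S-1)t/(2|px|)$, and then upgrade this to the angular inequality via the definition of $\delta_S$. The only noticeable difference is expositional: where the paper invokes ``concavity of the cosine function'' for the final upgrade, you supply the explicit product-to-sum identity $\cos\alpha+\cos\beta-1-\cos(\alpha+\beta)=4\sin(\alpha/2)\sin(\beta/2)\cos((\alpha+\beta)/2)$, which makes that step more transparent.
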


\begin{proof}
    \begin{enumerate}[label*=(\arabic*), fullwidth]
        \item By applying the Euclidean law of cosines to the Euclidean comparison triangle $\tilde{\Delta}px\xi(t)$, it follows that
        \begin{equation}\label{eq:angle_well_defined_1}
            \cos \tilde{\angle}px\xi(t)
            =
            \frac{|px|^2 + t^2 -|p\xi(t)|^2}{2t|px|}\\
            =
            \frac{|px|^2-|p\xi(t)|^2 + St^2}{2t|px|} - \frac{(S-1)t}{2|px|}.
        \end{equation}
        From the $S$-concavity, it follows that the quotient $(|p\xi(t)|^2-|p\xi(0)|^2-St^2)/t$ is non-increasing in $t\in [0,l(\xi)]$.
        Therefore, $\lim_{t\searrow 0}(|px|^2-|p\xi(t)|^2+St^2)/t$ exists.
        By the monotonicity and continuity of the cosine function, it follows from the equality \eqref{eq:angle_well_defined_1} that $\lim_{t\rightarrow 0}\tilde{\angle}px\xi(t)$ exists and satisfies
        \begin{multline}
            \cos \angle px\xi=\lim_{t\searrow 0}\cos \tilde{\angle}px\xi(t)
            =
            \lim_{t\searrow 0}\frac{|px|^2-|p\xi(t)|^2 + St^2}{2t|px|}\\
            =
            \inf_{t>0}\frac{|px|^2-|p\xi(t)|^2 + St^2}{2t|px|}.
        \end{multline}
        Moreover, the monotonicity of $t\mapsto(|px|^2-|p\xi(t)|^2 + St^2)/t$ implies that
        \begin{equation}\label{eq:angle_well_defined_2}
            \cos \tilde{\angle}px\xi(t)
            =
            \frac{|px|^2-|p\xi(t)|^2 + St^2}{2t|px|} - \frac{(S-1)t}{2|px|}\\
            \geq
            \cos \angle px\xi - \tilde{\delta}_S(t/|px|),
        \end{equation}
        where $\tilde{\delta}_S(r):=(S-1)r/2$.
        Take $\delta_S(t;|px|):=\arccos(1-\tilde{\delta}_S(t/|px|))$.
        By the concavity of the cosine function, it can be deduced from the inequality \eqref{eq:angle_well_defined_2} that
        \begin{equation}
            \tilde{\angle}px\xi(t) \leq \angle px\xi + \delta_S(t;|px|),
        \end{equation}
        for any $t>0$ satisfying $t\leq l(\xi)$ and $(S-1)t/(2|px|)\leq 1$.

        \item By \eqref{eq:angle_well_defined_1}, it follows that
        \begin{multline}\label{eq:angle_well_defined_3}
            \cos\tilde{\angle}px\xi(t)
            =
            \frac{|px|^2+t^2-|p\xi(t)|^2}{2t|px|}
            =
            \frac{(|px|-|p\xi(t)|)2|px|}{2t|px|}+ \frac{t^2}{2t|px|}\\
            +\frac{(|px|-|p\xi(t)|)(|px|+|p\xi(t)|-2|px|)}{2t|px|}\\
            =
            \frac{|px|-|p\xi(t)|}{t} + \delta(t;\xi),
        \end{multline}
        where $\delta(t;\xi):=\frac{t}{2|px|} - \frac{(|p\xi(t)|-|px|)^2}{2t|px|}$.
        Note that from $0\leq \delta(t;\xi)\leq \frac{t}{2|px|}\to 0$ as $t\searrow 0$ and $\lim_{t\rightarrow 0}\cos\tilde{\angle}px\xi(t)$ exits, we know that $\lim_{t\rightarrow 0}\frac{|px|-|p\xi(t)|}{t}$ exists and 
        \begin{equation}
            \lim_{t\searrow 0}\frac{|px|-|p\xi(t)|}{t}
            =
            \lim_{t\searrow 0}\cos \tilde{\angle}px\xi(t)
            =
            \cos\angle px\xi.
        \end{equation}
    \end{enumerate}
\end{proof}
\begin{remark}
    As shown in Lemma \ref{lemma:angle_well_defined}, $\cos\angle px\xi$ represents the directional derivative of the distance function from $p$. 
    In the case when $X$ is an Alexandrov space, the first variation formula gives that the directional derivative of the distance function from $p$ satisfies
    \begin{equation}
        \frac{d}{dt}\bigg|_{t=0}\left|p\xi(t)\right|
        =
        D_x d_p\left(\xi'(0)\right)
        =
        -\cos \min_{w\in \Uparrow_x^p}\angle(w,\xi'(0)),
    \end{equation}
    where $\xi'(0)\in \Sigma_xX$ denotes the initial direction of the geodesic $\xi$, and $\Uparrow_x^p\subset \Sigma_xX$ is the set of all initial directions of geodesics from $x$ to $p$. 
    In this case, the angle $\angle px\xi$ coincides with the minimal angle between the geodesics from $x$ to $p$ and the direction $\xi$, i.e., $\angle px\xi = \min_{w \in \Uparrow_x^p}\angle (w,\xi'(0))$. 
    Here, $\angle(\cdot,\cdot)$ denotes the angle metric on the space of directions $\Sigma_xX$.
\end{remark}

\begin{remark}
    We emphasize that the error function $\delta_S$ in the almost comparison inequality \eqref{eq:angle_almost_comparison} depends only on the ratio $t/|px|$ and the uniform smoothness constant $S$, and does not depend on $x$ and the geodesic $\xi$.
    In particular, for any $\varepsilon>0$, we can find a neighborhood $U$ of $x$ and a constant $l>0$, such that for any unit-speed geodesic $\xi$ starting from $z\in U$, it holds
    \begin{equation}
        \tilde{\angle}pz\xi(t)\leq \angle pz\xi + \delta_S(t;|pz|)< \angle px\xi + \varepsilon,\quad \text{for any }t\in [0,l].
    \end{equation}
\end{remark}

Note that angles viewed from a fixed point generally exhibit an asymmetric nature due to the non-Riemannian nature of spaces, as demonstrated by the following example:
\begin{example}[Asymmetry of angles viewed from a fixed point]\label{example:angle_asymmetry_1}
    Let $a:=(1,2)$ and $b:=(4,-1)$ be two points in $\ell^2_3:=(\mathbb{R}^2,\|\cdot\|_3)$, and let $\gamma,\eta$ be the unit-speed geodesic segments from the origin $o$ to $a$ and $b$, respectively.
    A direct computation shows that $\angle ao\eta=\pi/2$, whereas $\angle bo\gamma=\arccos(14/195^{2/3})\in (0,\pi/2)$, showing that $\angle ao\eta\neq \angle bo\gamma$.
    In particular, the vector $\vec{oa}$ is orthogonal to $\vec{ob}$ at $o$, while $\vec{ob}$ is not orthogonal to $\vec{oa}$ at $o$, in the sense of Birkhoff--James orthogonality.
    We refer to Remark \ref{rmk:asymmetry_k_strainer} for a related asymmetry phenomenon of strainers, and to \cite{alonso2022orthogonality,birkhoff1935orthogonality,james1945orthogonality,james1947orthogonality,kell2016symmetric,chmielinski2018approximate} for further discussion of the asymmetry of Birkhoff--James orthogonality in normed spaces.
\end{example}

Next we prove several fundamental properties of angles viewed from a fixed point. 
The first one is the lower semi-continuity of the map $(p,\xi)\mapsto\angle px\xi$.
\begin{lemma}[Lower semi-continuity of $\angle px\xi$]\label{lemma:angle_lsc}
    Let $(X,d)$ be an $S$-concave space with $S\geq 1$.
    Let $\{p_i\}_i\subset X$ be a sequence of points and $\{\xi_i\}_i$ be a sequence of constant-speed geodesics such that $\xi_i$ converges pointwisely to a non-trivial geodesic $\xi$ and $p_i$ converges to a point $p\neq \xi(0)$.
    Re-parametrize $\xi_i$ and $\xi$ by arc-length, and denote them instead by $\eta_i$ and $\eta$ respectively.
    Then it holds
    \begin{equation}
        \angle px\eta \leq \liminf_{i\rightarrow \infty} \angle p_ix_i\eta_i,
    \end{equation}
    where $x_i=\eta_i(0)$ and $x=\eta(0)$.
\end{lemma}
\begin{proof}
    Let $0<t<l(\eta)$.
    From the assumption that $\xi_i$ converges pointwisely to the geodesic $\xi$, it can be seen that $\eta_i(t)\to \eta(t)$ as $i$ goes to infinity.
    In particular, it holds that
    \begin{equation}
        |p_i\eta_i(t)|\to |p\eta(t)|,\quad |p_i\eta_i(0)|\to |p\eta(0)|,\quad |\eta_i(0)\eta_i(t)|\to |\eta(0)\eta(t)|,
    \end{equation}
    as $i$ goes to infinity.
    From the Euclidean law of cosines, it follows that $\tilde{\angle} p_ix_i\eta_i(t)$ converges to $\tilde{\angle} px\eta(t)$.
    On the other hand, by the choice of the error function $\delta_S$ in the almost comparison \eqref{eq:angle_almost_comparison}, we have $\lim_{i\rightarrow \infty}\delta_S(t;|p_ix_i|)=\delta_S(t;|px|)$.
    Therefore, by the almost comparison inequality \eqref{eq:angle_almost_comparison}, it follows that
    \begin{multline}
        \tilde{\angle}px \eta(t)
        =
        \lim_{i\to \infty}\tilde{\angle}p_ix_i\eta_i(t)
        \leq
        \liminf_{i\to \infty}\angle p_ix_i\eta_i + \lim_{i\to \infty}\delta_S(t;|p_ix_i|)\\
        =
        \liminf_{i\to \infty}\angle p_ix_i\eta_i + \delta_S(t;|px|).
    \end{multline}
    By taking $t\rightarrow 0$ on both sides of the inequality above, our claim follows.
\end{proof}

Next we show that the sum of angles viewed from a common point $p$ at $x$ towards two reversed directions along the geodesic passing through $x$, is bounded above by $\pi$.
\begin{lemma}\label{lemma:sum_of_angle_1}
    Let $(X,d)$ be an $S$-concave space with $S\geq1$.
    Let $p,x\in X$ and $\gamma:[-\varepsilon,\varepsilon]\to X$ be a unit-speed geodesic passing through $x:=\xi(0)$, and let $\xi, \eta$ be the re-parametrization by arc-length of the geodesic segments of $\gamma$ restricted to $[0,\varepsilon]$ and $[-\varepsilon,0]$ respectively with the common starting point $\xi(0)=\eta(0)=x$.
    Then it holds
    \begin{equation}
        \angle px \xi + \angle px\eta \leq \pi.
    \end{equation}
\end{lemma}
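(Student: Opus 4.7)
The plan is to reduce the angular inequality to a one-sided derivative comparison for the scalar function $f(t) := |p\gamma(t)|$ at the junction point $t = 0$, using the $S$-concavity of squared distance along $\gamma$. The two angles $\angle px\xi$ and $\angle px\eta$ encode the right derivatives of $f$ on the two sides of $0$, and $S$-concavity will supply the correct sign for the jump $f'_-(0) - f'_+(0)$.

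First I would show that the auxiliary function $\phi(t) := f(t)^2 - S t^2$ is concave on $[-\varepsilon,\varepsilon]$. Given $s < t$ in this interval and $\lambda \in [0,1]$, reparametrizing the sub-geodesic $\gamma|_{[s,t]}$ as a constant-speed curve on $[0,1]$ (with speed $t-s$) and invoking Definition \ref{def:S_concave} yields
\begin{equation}
    f\bigl((1-\lambda)s + \lambda t\bigr)^2 \geq (1-\lambda) f(s)^2 + \lambda f(t)^2 - S\lambda(1-\lambda)(t-s)^2.
\end{equation}
The elementary identity $(1-\lambda)s^2 + \lambda t^2 - \bigl((1-\lambda)s+\lambda t\bigr)^2 = \lambda(1-\lambda)(t-s)^2$ then shows that the subtraction of $S t^2$ in $\phi$ absorbs the $S$-correction exactly, so $\phi$ is midpoint (and hence fully) concave on $[-\varepsilon,\varepsilon]$. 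Concavity of $\phi$ at the interior point $0$ gives $\phi'_-(0) \geq \phi'_+(0)$; since the smooth term $S t^2$ has vanishing derivative at $0$, this translates to $f'_-(0) \geq f'_+(0)$ after dividing by $2 f(0) = 2|px| > 0$.

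Next I would identify these one-sided derivatives with the cosines of the two angles. Since $\xi(s) = \gamma(s)$ for $s \geq 0$, Lemma \ref{lemma:angle_well_defined}(2) gives $f'_+(0) = -\cos\angle px\xi$. For the left derivative, the change of variable $\eta(s) = \gamma(-s)$ together with the chain rule and the same lemma applied to $\eta$ yields $f'_-(0) = -\frac{d}{ds}\bigg|_{s=0}|p\eta(s)| = \cos\angle px\eta$. Combining with the previous step,
\begin{equation}
    \cos\angle px\eta \geq -\cos\angle px\xi = \cos(\pi - \angle px\xi),
\end{equation}
and since both angles lie in $[0,\pi]$, the monotonicity of cosine on that interval yields $\angle px\eta \leq \pi - \angle px\xi$, as desired.

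The only subtle point is the opening step. A priori, $S$-concavity is strictly weaker than concavity of $f^2$, and one might worry that a defect proportional to $S - 1$ propagates into the final inequality; the clean cancellation delivered by the quadratic identity above is precisely what removes this concern and produces a bound independent of $S$. The remaining identifications and sign manipulations are immediate from Lemma \ref{lemma:angle_well_defined}(2) and elementary calculus, so I do not anticipate further obstacles.
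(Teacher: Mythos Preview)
Your proof is correct and follows essentially the same approach as the paper. The paper applies the $S$-concave midpoint inequality directly to $\gamma|_{[-t,t]}$, plugs into the Euclidean law of cosines to obtain $\cos\tilde{\angle}px\xi(t) + \cos\tilde{\angle}px\eta(t) \geq (1-S)t/|px|$, and lets $t\to 0$; you instead pass through the equivalent concavity of $\phi(t)=f(t)^2-St^2$ and invoke Lemma~\ref{lemma:angle_well_defined}(2) to identify the one-sided derivatives with $\pm\cos$ of the two angles, arriving at the same inequality $\cos\angle px\xi + \cos\angle px\eta \geq 0$. The only difference is packaging: the paper works with comparison angles throughout, while you separate the concavity step from the angle identification, but the underlying mechanism is identical.
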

\begin{proof}
    Let $t\in (0,\varepsilon)$.
    Applying the $S$-concave inequality \eqref{eq:S-concave} to the point $p$ and the geodesic $\xi:[-\varepsilon,\varepsilon]\to X$, it follows that
    \begin{equation}
        \left|px\right|^2
        \geq
        \frac{1}{2}\left|p\gamma(-t)\right|^2
        + \frac{1}{2}\left|p\gamma(t)\right|^2 - \frac{S}{4}\left(2t\right)^2.
    \end{equation}
    By plugging the inequality above into the Euclidean law of cosines, we obtain that
    \begin{multline}
        \cos\tilde{\angle}px\xi(t) + \cos\tilde{\angle}px\eta(t)
        =
        \frac{t^2 + |px|^2- |p\gamma(t)|^2}{2|px|t}\\
        + \frac{t^2 + |px|^2 - |p\gamma(-t)|^2}{2|px|t}
        \geq
        \frac{(1-S)t}{|px|}.
    \end{multline}
    Letting $t\to 0$ on both sides of the inequality above, we obtain that
    \begin{equation}
         \cos\angle px\xi + \cos\angle px\eta\geq 0.
    \end{equation}
    This implies that $\angle px\xi + \angle px\eta\leq \pi$.
\end{proof}

We now discuss angles in locally semi-convex spaces (cf. Definition \ref{def:squared_convex}).
It turns out by the following lemma, similar to Lemma \ref{lemma:angle_well_defined}, that angles defined in \eqref{eq:angle_view_from_fixed_point} is also well-defined in locally semi-convex spaces if the distance $|px|$ is small enough.
\begin{lemma}\label{lemma:angle_well_defined_2}
    Let $(X, d)$ be a $(C, D)$-semi-convex space. 
    Then the angle $\angle px\xi$ in \eqref{eq:angle_view_from_fixed_point} is well-defined for any unit-speed geodesic $\xi$ starting from $x$ if $|px|<D$.
    Furthermore, it satisfies the following almost comparison inequality:
    \begin{equation}\label{eq:angle_almost_comparison_lower}
        \tilde{\angle}px\xi(t)\geq \angle px\xi - \delta_{C}(t;|px|),\quad \text{for any }t\in [0,t_0],
    \end{equation}
    where $\delta_{C}(t;|px|):=\arccos(1-(C+1)t/(2|px|))$ is a non-negative function defined on $[0,t_0]$, and $t_0>0$ is a constant depending only on $|px|$, the constants $C,D$ and the length $l(\xi)$.
    Finally, the function $t\mapsto |p\xi(t)|$ is differentiable at $t=0$ and satisfies
    \begin{equation}
        \frac{d}{dt} \bigg|_{t=0}\left|p\xi(t)\right|
        =
        -\cos\angle px\xi.
    \end{equation}
\end{lemma}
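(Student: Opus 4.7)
The plan is to mirror the proof of Lemma \ref{lemma:angle_well_defined}, trading the $S$-concavity for $(C,D)$-local semi-convexity so that the resulting monotonicity is reversed. First I would reformulate \eqref{eq:squared-convex} as convexity of the auxiliary function
\begin{equation*}
    h(t) := |p\xi(t)|^2 + Ct^2
\end{equation*}
on any sub-interval $[0, l] \subset [0, l(\xi)]$ where $\sup_{t\in[0,l]}|p\xi(t)| < D$: applying \eqref{eq:squared-convex} to the reparametrization of $\xi|_{[0,l]}$ as a constant-speed geodesic on $[0,1]$, a direct computation turns \eqref{eq:squared-convex} into the chord bound $h(t) \leq (1 - t/l)h(0) + (t/l)h(l)$. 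The resulting convexity of $h$ makes the chord slope $t \mapsto (h(t) - h(0))/t$ non-decreasing, which rewrites as
\begin{equation*}
    t \mapsto \frac{|px|^2 - |p\xi(t)|^2 - Ct^2}{t} \text{ is non-increasing on }(0, l].
\end{equation*}

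The second step is the law-of-cosines decomposition
\begin{equation*}
    \cos\tilde{\angle}px\xi(t) = \frac{|px|^2 - |p\xi(t)|^2 - Ct^2}{2t|px|} + \frac{(C+1)t}{2|px|},
\end{equation*}
in which the first summand is non-increasing in $t$ and the second vanishes at $0^+$. The triangle inequality $|p\xi(t)| \geq |px| - t$ bounds the first summand above by $1$, while freezing any reference time and using the monotonicity bounds it below. Hence $\cos\tilde{\angle}px\xi(t)$ admits a finite limit in $[-1, 1]$, so $\angle px\xi$ is well-defined. Moreover, monotonicity immediately upgrades this to the pointwise inequality $\cos\tilde{\angle}px\xi(t) \leq \cos\angle px\xi + (C+1)t/(2|px|)$ for all admissible $t$.

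The third step converts this cosine estimate into \eqref{eq:angle_almost_comparison_lower}. Setting $r := (C+1)t/(2|px|)$, the target bound is $\arccos(\cos\angle px\xi) - \arccos(\cos\angle px\xi + r) \leq \arccos(1 - r) - \arccos(1)$. This follows from a slope-comparison argument: since $|\arccos'(x)| = (1-x^2)^{-1/2}$ is strictly increasing in $|x|$ on $[0, 1)$, the mean value theorem applied to $\arccos$ on the intervals $[\cos\angle px\xi, \cos\angle px\xi + r]$ and $[1 - r, 1]$ shows that the right-hand interval, being closer to $1$, has the larger increment. The edge case $\cos\angle px\xi + r \geq 1$ is trivial because then $\angle px\xi - \arccos(1 - r) \leq 0 \leq \tilde{\angle}px\xi(t)$ automatically.

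The main obstacle is the bookkeeping in selecting $t_0$ so that every estimate is legitimate: one needs $t_0 \leq l(\xi)$ to stay inside the geodesic, $t_0 < D - |px|$ so that $|p\xi(t)| \leq |px| + t < D$ secures the hypothesis of \eqref{eq:squared-convex}, and $(C+1)t_0/(2|px|) \leq 1$ so that $\delta_C(t_0; |px|)$ is defined. Each depends only on $|px|$, $C$, $D$, and $l(\xi)$, matching the statement. Beyond this bookkeeping, no real geometric difficulty arises; the only conceptual subtlety is that the monotonicity direction is opposite to the $S$-concave case, which is precisely why the almost-comparison inequality \eqref{eq:angle_almost_comparison_lower} points opposite to \eqref{eq:angle_almost_comparison}.
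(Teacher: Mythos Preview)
Your proposal is correct and follows essentially the same route as the paper: both extract monotonicity of the difference quotient from convexity of $t\mapsto |p\xi(t)|^2+Ct^2$, use the identical law-of-cosines decomposition, derive the same cosine inequality $\cos\tilde{\angle}px\xi(t)\leq\cos\angle px\xi+(C+1)t/(2|px|)$, and impose the same three constraints on $t_0$. The only loose spot is your mean-value-theorem justification for converting the cosine bound into \eqref{eq:angle_almost_comparison_lower} (when $\cos\angle px\xi$ is near $-1$ the interval $[\cos\angle px\xi,\cos\angle px\xi+r]$ is not pointwise ``closer to $1$'', though the inequality still holds by the symmetry of $(1-x^2)^{-1/2}$ about $x=0$), but the paper's own one-line appeal to ``monotonicity of the derivative of cosine'' is no more precise.
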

\begin{proof}
    The proof is similar to Lemma \ref{lemma:angle_well_defined}.
    Let $p,x\in X$ be two points such that $|px|<D$, and let $\xi:[0,l]\to X$ be an arbitrary unit-speed geodesic starting from $x$.
    By restricting $\xi$ to a smaller domain, we may assume that $\sup_{t\in [0,l]}|p\xi(t)|<D$.
    Let $t\in (0,l)$.
    By applying the $(C,D)$-semi-convexity \eqref{eq:squared-convex} to the point $p$ and the geodesic $\xi$, it follows that
    \begin{equation}
        t\mapsto \frac{|p\xi(t)|^2 + Ct^2 - |p\xi(0)|^2}{t}
    \end{equation}
    is non-decreasing on $[0,l]$.
    Therefore, the limit of $(|px|^2-|p\xi(t)|^2-Ct^2)/(2t|px|)$ as $t\searrow 0$ exists and satisfies
    \begin{equation}\label{eq:angle_squared_convex_1}
        \lim_{t\searrow 0}\frac{|px|^2-|p\xi(t)|^2 - Ct^2}{2t|px|}
        =
        \sup_{t\in (0,l)}\frac{|px|^2-|p\xi(t)|^2-Ct^2}{2t|px|}.
    \end{equation}
    By the Euclidean law of cosines, it follows that the limit of $\cos\tilde{\angle}px\xi(t)$ as $t\searrow 0$ exists and satisfies
    \begin{multline}
        \lim_{t\searrow 0}\cos\tilde{\angle}px\xi(t)
        =
        \lim_{t\searrow 0}\left(\frac{|px|^2 - |p\xi(t)|^2-Ct^2}{2t|px|} + \frac{(C+1)t}{2|px|}\right)\\
        =
        \sup_{t\in (0,l)}\frac{|px|^2-|p\xi(t)|^2-Ct^2}{2t|px|}.
    \end{multline}
    From the monotonicity of the cosine function, we can see that $\angle px\xi=\lim \tilde{\angle}px\xi(t)$ exists.
    Furthermore, by the monotonicity of $t\mapsto (|px|^2 -|p\xi(t)|^2-Ct^2)/(2t|px|)$, it follows that
    \begin{multline}
        \cos\tilde{\angle}px\xi(t)
        =
        \frac{|px|^2-|p\xi(t)|^2-Ct^2}{2t|px|} + \frac{(C+1)t}{2|px|}\\
        \leq
        \sup_{s\in (0,l)}\left(\frac{|px|^2-|p\xi(s)|^2-Cs^2}{2s|px|}\right) + \frac{(C+1)t}{2|px|}
        =
        \cos\angle px\xi + \frac{(C+1)t}{2|px|}.
    \end{multline}
    Take $\delta_{C}(t;|px|)=\arccos(1- (C+1)t/(2|px|))$.
    By the concavity of the cosine function, we obtain that
    \begin{equation}
        \tilde{\angle}px\xi
        \geq
        \angle px\xi - \delta_{C}(t;|px|),
    \end{equation}
    for any $t>0$ with $t\leq l$ and $t+|px|<D$ and $(C+1)t/(2|px|)\leq 1$.
    Finally, the differentiability of $t\mapsto |p\xi(t)|$ at $t=0$ and the formula for the derivative can be shown by the same argument as in Lemma \ref{lemma:angle_well_defined}.
\end{proof}

We conclude this subsection with the following two lemmas without proof.
Their proofs are almost the same as Lemma \ref{lemma:angle_lsc} and Lemma \ref{lemma:sum_of_angle_1}.

\begin{lemma}[Upper semi-continuity of $\angle px\xi$]\label{lemma:angle_usc}
    Let $(X,d)$ be a $(C,D)$-semi-convex space.
    Let $\{p_i\}_i\subset X$ be a sequence of points, and $\{\xi_i\}_i$ be a sequence of constant-speed geodesics on $X$ such that $\xi_i$ converges pointwisely to a non-trivial geodesic $\xi$ and $p_i$ converges to a point $p$ with $p\neq \xi(0)$ and $\sup_{i}|p_i\xi_i(0)|, |p\xi(0)|<D$.
    Let $\eta_i$ and $\eta$ be the re-parametrization of $\xi_i$ and $\xi$ by arc-length respectively.
    Then it holds
    \begin{equation}
        \angle px\eta \geq \limsup_{i\rightarrow \infty} \angle p_ix_i\eta_i,
    \end{equation}
    where $x_i=\eta_i(0)$ and $x=\eta(0)$.
\end{lemma}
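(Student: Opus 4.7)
The plan is to mirror the proof of Lemma \ref{lemma:angle_lsc} almost verbatim, replacing the upper almost comparison inequality \eqref{eq:angle_almost_comparison} from the $S$-concave side with the lower almost comparison inequality \eqref{eq:angle_almost_comparison_lower} from Lemma \ref{lemma:angle_well_defined_2}. Since the target estimate is $\limsup \angle p x_i \eta_i \leq \angle p x \eta$, it is natural that the direction of the key inequality must be flipped compared with the lower semi-continuity argument.

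First I would fix $t > 0$ small. By pointwise convergence $\xi_i \to \xi$, the lengths of $\xi_i$ converge to that of $\xi$, hence $l(\eta_i) \to l(\eta)$; also $x_i = \eta_i(0) \to \eta(0) = x$ and $\eta_i(t) \to \eta(t)$ for any fixed $t$ less than $l(\eta)$ and $i$ large. In particular $|p x_i| \to |px| < D$ by the hypothesis $\sup_i |p\xi_i(0)| < D$, so for all sufficiently small $t$ and sufficiently large $i$, the constant $t_0$ in Lemma \ref{lemma:angle_well_defined_2} can be chosen uniformly, and the lower almost comparison inequality
\[
    \tilde{\angle}p x_i \eta_i(t) \geq \angle p x_i \eta_i - \delta_{C}(t;|p x_i|)
\]
applies. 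By the Euclidean law of cosines, the continuity of $\arccos$, and the convergences $|p x_i| \to |px|$, $|p \eta_i(t)| \to |p \eta(t)|$, $|x_i \eta_i(t)| = t \to |x \eta(t)| = t$, the left-hand side converges:
\[
    \lim_{i \to \infty} \tilde{\angle} p x_i \eta_i(t) = \tilde{\angle} p x \eta(t).
\]
Since $\delta_{C}$ is continuous in its second argument, $\delta_{C}(t; |p x_i|) \to \delta_{C}(t; |px|)$. Taking $\limsup$ on both sides of the almost comparison inequality then yields
\[
    \tilde{\angle} p x \eta(t) \geq \limsup_{i \to \infty} \angle p x_i \eta_i - \delta_{C}(t;|px|),
\]
or equivalently $\limsup_i \angle p x_i \eta_i \leq \tilde{\angle} p x \eta(t) + \delta_{C}(t;|px|)$.

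Finally, letting $t \searrow 0$, Lemma \ref{lemma:angle_well_defined_2} gives $\tilde{\angle} p x \eta(t) \to \angle p x \eta$ and $\delta_{C}(t;|px|) \to 0$, which proves $\limsup_{i \to \infty} \angle p x_i \eta_i \leq \angle p x \eta$. The only minor subtlety is ensuring uniformity in $i$ of the threshold $t_0$ in Lemma \ref{lemma:angle_well_defined_2}; this is immediate once one notes that $t_0$ depends only on $|p x_i|$, the constants $C, D$, and $l(\eta_i)$, all of which converge or can be bounded uniformly for large $i$, so shrinking $t$ once suffices. No essential obstacle arises beyond this bookkeeping, since the argument is structurally dual to the lower semi-continuity proof.
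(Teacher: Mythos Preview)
Your proposal is correct and takes essentially the same approach as the paper, which in fact omits the proof entirely with the remark that it follows the identical argument to Lemma~\ref{lemma:angle_lsc}. Your writeup is precisely the dual argument the paper has in mind, with the lower almost comparison inequality~\eqref{eq:angle_almost_comparison_lower} replacing~\eqref{eq:angle_almost_comparison}, and your bookkeeping about the uniformity of $t_0$ is a reasonable point to make explicit.
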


\begin{lemma}\label{lemma:sum_of_angle_2}
    Let $(X,d)$ be a $(C,D)$-semi-convex space.
    Let $p\in X$ and $\gamma:[-\varepsilon,\varepsilon]\to X$ be a unit-speed geodesic passing through $x:=\gamma(0)$ such that $|px|<D$. 
    Let $\xi$ and $\eta$ be the re-parametrizations by arc-length, of the geodesic segments of $\gamma$ restricted to $[0,\varepsilon]$ and $[-\varepsilon,0]$, respectively, both starting at $x$.
    Then it holds
    \begin{equation}
        \angle px \xi + \angle px\eta \geq \pi.
    \end{equation}
\end{lemma}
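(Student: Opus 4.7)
The plan is to mirror the proof of Lemma \ref{lemma:sum_of_angle_1} almost verbatim, replacing $S$-concavity by $(C,D)$-local semi-convexity and using Lemma \ref{lemma:angle_well_defined_2} in place of Lemma \ref{lemma:angle_well_defined}; because local semi-convexity gives the reversed inequality on $t\mapsto|p\gamma(t)|^2$, the resulting inequality between the two cosines will flip sign, and the upper bound $\pi$ for the sum of angles turns into a lower bound.

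First, I would fix $t\in(0,\varepsilon)$ small enough so that all points $\gamma(s)$ with $|s|\leq t$ remain in the ball of radius $D$ around $p$ (this is possible because $|px|<D$ and $\gamma$ is continuous). This is where the local nature of the hypothesis enters: unlike $S$-concavity, we only have control on geodesics that stay inside the $D$-ball centered at $p$. I would apply inequality \eqref{eq:squared-convex} to the constant-speed reparametrization of $\gamma|_{[-t,t]}$ at the midpoint parameter $1/2$, which yields
\begin{equation}
    |px|^2 \;\leq\; \tfrac{1}{2}|p\gamma(-t)|^2 + \tfrac{1}{2}|p\gamma(t)|^2 + \tfrac{C}{4}(2t)^2.
\end{equation}

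Next, I would plug this into the two Euclidean laws of cosine for the comparison triangles $\tilde\Delta px\xi(t)$ and $\tilde\Delta px\eta(t)$, exactly as in the proof of Lemma \ref{lemma:sum_of_angle_1}, to obtain
\begin{equation}
    \cos\tilde{\angle}px\xi(t) + \cos\tilde{\angle}px\eta(t) \;\leq\; \frac{(1+C)t}{|px|}.
\end{equation}
Letting $t\searrow 0$, the right-hand side tends to $0$, and by Lemma \ref{lemma:angle_well_defined_2} the left-hand side converges to $\cos\angle px\xi + \cos\angle px\eta$ (the angles are well-defined since $|px|<D$). Hence $\cos\angle px\xi + \cos\angle px\eta\leq 0$, which is equivalent to $\angle px\xi + \angle px\eta\geq \pi$.

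There is no real obstacle here; the only subtlety is bookkeeping around the local nature of semi-convexity (ensuring the geodesic $\gamma$ on the chosen subinterval stays in the $D$-ball around $p$ so that \eqref{eq:squared-convex} applies and Lemma \ref{lemma:angle_well_defined_2} yields well-defined angles in the limit). Once this is in place, the argument is strictly dual to that of Lemma \ref{lemma:sum_of_angle_1}: the sign of the error term $St(1-t)|\cdot|^2$ flips to $Ct(1-t)|\cdot|^2$ on the other side of the inequality, and correspondingly the final inequality between the two cosines reverses, producing the lower bound $\pi$ rather than the upper bound.
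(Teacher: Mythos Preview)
Your proof is correct and follows exactly the approach the paper has in mind: the paper explicitly omits the proof of this lemma, stating that it ``follows identical arguments to those of Lemma \ref{lemma:sum_of_angle_1},'' and your argument is precisely that mirror image, with the sign of the quadratic error term reversed by local semi-convexity and the extra care about staying inside the $D$-ball around $p$.
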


\subsection{Angles of fixed scale and tangent cones}
In this subsection, we introduce angles of fixed scale and tangent cones, in the setting of Busemann spaces.
After recalling several important results obtained by Kell \cite{kell2019sectional} about tangent cones, we define \emph{space of directions at a point with common length} and establish that, when the underlying Busemann concave space is doubling, spaces of directions with common length equipped with an angle $\angle_x$ are quasi-metric spaces, and are uniformly compact.

\begin{definition}[Angles of fixed scale]\label{def:angle_of_fixed_scale} 
    Let $(X,d)$ be a Busemann concave space and let $x\in X$.
    Let $\gamma, \eta$ be two non-trivial unit-speed geodesics starting from $x$.
    For any $t,s>0$, the angle $\angle_x(\gamma(t),\eta(s))$ is defined as
    \begin{equation}\label{eq:angle_of_fixed_scale}
        \angle_x(\gamma(t),\eta(s))
        :=
        \sup_{\substack{\theta\in (0,1],\\ \max\{\theta t,\theta s\}\leq a}}\tilde{\angle}_x(\gamma(\theta t),\eta(\theta s)),
    \end{equation}
    where $a>0$ is an arbitrary positive number such that $\gamma,\eta$ are both defined on $I_a:=[0,a]$.
    We call $\angle_x(\gamma(t),\eta(s))$ the \emph{angle of fixed scale}.
    In the case $t=s$, we call $\angle_x(\gamma(t),\eta(t))$ the \emph{angle of common scale}.
\end{definition}
\begin{remark}
    The definition of angles of fixed scale in \cite{fujioka2025top} is slightly different from ours, where we do not assume that $\gamma(t)$ and $\eta(s)$ are well-defined.  
    We adopt the current definition to maintain consistency with the definition of tangent cones in Busemann concave spaces.
\end{remark}

It follows from the Busemann concavity \eqref{eq:Busemann_monotone_1} that the function $\theta \mapsto \tilde{\angle}_x(\gamma(\theta t),\eta(\theta s))$ is non-increasing on the interval $(0, \min\{1,a/t,a/s\}]$.
In particular, this monotonicity implies that the value on the right-hand side of \eqref{eq:angle_of_fixed_scale} does not depend on the choice of $a$, and the supremum in \eqref{eq:angle_of_fixed_scale} is actually a limit.
Therefore, the angle $\angle_x(\gamma(t),\eta(s))$ is well-defined.

We emphasize that in contrast to the angles in Alexandrov spaces, the angle $\angle_x(\gamma(t),\eta(s))$ in Busemann concave spaces generally depends on the choice of the length parameters $t, s > 0$.
We will demonstrate that $\angle_x(\gamma(t),\eta(s))$ depends solely on the ratio $t/s$, rather than on the individual values of $t$ and $s$.
We also note that, angles of fixed scale are essentially different from angles viewed from a fixed point in Definition \ref{def:angle_view_from_fixed_point}, as illustrated by the following example.

\begin{example}[Asymmetry of two notions of angle]\label{example:angle_asymmetry_2}
    Let $\gamma, \eta\subset\ell^2_p:=(\mathbb{R}^2, \|\cdot\|_p)$ be two line segments with $p \in [2,\infty)$, both starting from the origin $o$ and ending at the points $u := (0,1)$ and $v := (1,0)$, respectively.
    A direct computation shows that $\angle_o(\gamma(1), \eta(1)) = \arccos(1 - 2^{2/p-1})$, while $\angle_o(\gamma(1), \eta(1/2)) = \arccos\left(5/4+ (1 + 1/2^p)^{2/p}\right)$, and $\angle u o \eta = \lim_{t \searrow 0} \arccos\left(\frac{1 + t^2 - (1 + t^p)^{2/p}}{2t}\right)$.
    It is straightforward to verify that these three angles are generally distinct unless $p = 2$.
\end{example}

We now introduce the notion of tangent cone in the Busemann setting, following \cite{kell2019sectional}.
\begin{definition}[Pre-tangent cone and tangent cone]
    Let $(X,d)$ be a Busemann concave space and let $\Gamma_x$ denote the set of all non-trivial maximal unit-speed geodesics starting from $x\in X$.
    The pre-tangent cone at $x$, denoted by $\hat{T}_xX$, is defined as the set $\Gamma_x \times [0,\infty)/\sim$, where all points of the form $(\gamma,0)$, for $\gamma\in \Gamma_x$, are identified as a single point $o$.
    The metric $d_x$ on $\hat{T}_xX$ is defined as follows: given any $(\gamma,t),(\eta,s)\in \hat{T}_xX$, let $I_a:=[0,a]$ be an interval such that both $\gamma$ and $\eta$ are defined on $I_a$.
    The distance $d_x$ is defined as
    \begin{equation}\label{def:tangent_cone_metric_dx}
        d_x((\gamma,t),(\eta,s)):=\sup_{\substack{\theta\in (0,1],\\ \max\{\theta t,\theta s\}\leq a}}\frac{|\gamma(\theta t),\eta(\theta s)|}{\theta}.
    \end{equation}
    The tangent cone $(T_xX,d_x)$ at $x$ is defined as the completion of the pre-tangent cone $\hat{T}_xX$ with respect to the metric $d_x$.
    For $v\in T_xX$, we denote by $|v|_x:=d_x(o,v)$ the distance from $v$ to $o$.
\end{definition}

It has been shown in \citep[Lemma 2.17]{kell2019sectional} that $d_x$ is a well-defined metric on $\hat{T}_xX$, and the supremum in \eqref{def:tangent_cone_metric_dx} is in fact a limit, due to the Busemann concavity.
Furthermore, the metric $d_x$ satisfies the following positive homogeneity property:
\begin{equation}
    d_x((\gamma,\lambda t),(\eta,\lambda s))
    =\lambda d_x((\gamma,t),(\eta,s)),\quad \text{for any }\lambda>0, (\gamma,t),(\eta,s)\in \hat{T}_xX.
\end{equation}

The metric $d_x$ on the tangent cone $T_xX$ is related to the angle of fixed scale in Definition \ref{def:angle_of_fixed_scale} through the following law of cosines.
\begin{lemma}\label{lemma:relation_metric_and_angle}
    Let $(X,d)$ be a Busemann concave space and let $x\in X$.
    Let $\gamma,\eta$ be two non-trivial unit-speed geodesics, and $\bar{\gamma},\bar{\eta}$ be their maximal extensions.
    Then for any $t,s>0$, we have
    \begin{equation}\label{eq:relation_metric_and_angle}
        d_x((\bar{\gamma},t),(\bar{\eta},s))^2 = t^2 + s^2 - 2ts\cos\angle_x(\gamma(t),\eta(s)).
    \end{equation}
    Moreover, the angle of fixed scale is positive scaling-invariant; that is, for any $\lambda>0$,
    \begin{equation}
        \angle_x(\gamma(\lambda t), \eta(\lambda s)) = \angle_x(\gamma(t),\eta(s)).
    \end{equation}
    In particular, the angle $\angle_x(\gamma(t),\eta(s))$ depends only on the ratio $t/s$.
\end{lemma}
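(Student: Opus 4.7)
The plan is to derive both claims directly from the Euclidean law of cosines together with the fact that, by Busemann concavity, the suprema appearing in the definitions of $d_x$ and $\angle_x$ are both realized as monotone limits as the scaling parameter tends to $0$. I will first establish the cosine identity and then deduce scaling invariance as a consequence of the same monotonicity.

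First I would fix $t,s>0$ and choose $a>0$ large enough that the maximal extensions $\bar\gamma,\bar\eta$ are defined on $[0,a]$ with $\max\{t,s\}\leq a$. For each admissible $\theta\in(0,1]$, applying the Euclidean law of cosines to the comparison triangle $\tilde\Delta x\,\bar\gamma(\theta t)\,\bar\eta(\theta s)$ and dividing by $\theta^2$ yields
\begin{equation*}
    \left(\frac{|\bar\gamma(\theta t)\,\bar\eta(\theta s)|}{\theta}\right)^2
    =
    t^2 + s^2 - 2ts\cos\tilde\angle_x(\bar\gamma(\theta t),\bar\eta(\theta s)).
\end{equation*}
By the Busemann concavity \eqref{eq:Busemann_monotone_1} applied to the rescalings of $\bar\gamma$ and $\bar\eta$, the map $\theta\mapsto|\bar\gamma(\theta t)\,\bar\eta(\theta s)|/\theta$ is non-decreasing as $\theta\searrow 0$, whereas the comparison angle $\theta\mapsto\tilde\angle_x(\bar\gamma(\theta t),\bar\eta(\theta s))$ is non-increasing. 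Hence both suprema in \eqref{def:tangent_cone_metric_dx} and \eqref{eq:angle_of_fixed_scale} are in fact limits as $\theta\searrow 0$. Passing to this limit in the displayed identity and using continuity of the cosine gives the desired formula \eqref{eq:relation_metric_and_angle}.

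For the scaling invariance, in the supremum defining $\angle_x(\gamma(\lambda t),\eta(\lambda s))$ I would make the change of variables $\theta'=\lambda\theta$, which converts the constraint $\max\{\theta\lambda t,\theta\lambda s\}\leq a$ into $\max\{\theta' t,\theta' s\}\leq a$ and replaces the integrand by $\tilde\angle_x(\bar\gamma(\theta' t),\bar\eta(\theta' s))$. Since the supremum of the monotone function $\theta'\mapsto\tilde\angle_x(\bar\gamma(\theta' t),\bar\eta(\theta' s))$ over any interval $(0,b]$ equals its common limit as $\theta'\searrow 0$ and therefore does not depend on the upper endpoint $b$, the equality $\angle_x(\gamma(\lambda t),\eta(\lambda s))=\angle_x(\gamma(t),\eta(s))$ follows. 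Applying this with $\lambda=1/s$ gives $\angle_x(\gamma(t),\eta(s))=\angle_x(\gamma(t/s),\eta(1))$, proving that the angle depends only on the ratio $t/s$.

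I do not expect a real obstacle in this proof: once the monotonicity consequences of Busemann concavity are in hand (already used to justify that $d_x$ in \eqref{def:tangent_cone_metric_dx} and $\angle_x$ in \eqref{eq:angle_of_fixed_scale} are well-defined), the identity is a one-line limit argument and the scaling invariance is a one-line change of variables. The only minor subtlety worth flagging is the systematic use of the maximal extensions $\bar\gamma,\bar\eta$, which ensures that $\bar\gamma(\theta t)$ and $\bar\eta(\theta s)$ make sense for all sufficiently small $\theta$ even when $t$ or $s$ exceeds the original domains of $\gamma$ and $\eta$.
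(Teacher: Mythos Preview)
Your proof is correct and follows essentially the same approach as the paper: apply the law of cosines to the comparison triangle, divide by $\theta^2$, and pass to the limit $\theta\searrow 0$ using that Busemann concavity makes the relevant suprema into monotone limits. The only notable difference is in the scaling invariance: the paper deduces it from the already-established positive homogeneity of $d_x$ combined with the cosine formula \eqref{eq:relation_metric_and_angle}, whereas you argue directly via the change of variables $\theta'=\lambda\theta$ in the defining supremum; both are equally short and valid. One small phrasing issue: you write ``choose $a>0$ large enough that \ldots\ $\max\{t,s\}\leq a$'', but the maximal extensions need not reach that far---what you actually need (and what you correctly use later) is only that $\bar\gamma(\theta t),\bar\eta(\theta s)$ are defined for all sufficiently small $\theta$, which the constraint $\max\{\theta t,\theta s\}\leq a$ in the definition already guarantees.
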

\begin{proof}
    Let $a>0$ be a positive number such that $\gamma$ and $\eta$ are both defined on the interval $I_a=[0,a]$.
    Note that from the definition, the angle $\angle_x(\bar{\gamma}(t),\bar{\eta}(s))$ actually coincides with $\angle_x(\gamma(t),\eta(s))$, since $\tilde{\angle}_x(\bar{\gamma}(\theta t), \bar{\eta}(\theta s))=\tilde{\angle}_x(\gamma(\theta t),\eta(\theta s))$ when $\theta>0$ is small enough.
    For sufficiently small $\theta>0$, it follows by the Euclidean law of cosines that
    \begin{equation}\label{eq:relation_metric_and_angle_1}
        \left|\bar{\gamma}(\theta t)\bar{\eta}(\theta s)\right|^2
        =
        \left(\theta t\right)^2 + \left(\theta s\right)^2 - 2 \theta^2 ts \cos\tilde{\angle}_x(\gamma(\theta t), \eta(\theta s)).
    \end{equation}
    Dividing $\theta^2$ and letting $\theta\searrow 0$ on both sides of the equality above, we obtain the equality \eqref{eq:relation_metric_and_angle}.
    The second claim follows from the positive homogeneity of $d_x$ and \eqref{eq:relation_metric_and_angle}.
\end{proof}

We now discuss the relationship between the tangent cone $(T_xX, d_x)$ and the pointed Gromov--Hausdorff limit of the blow-ups $\{(X,d/\lambda,x)\}_{\lambda>0}$.
It is important to note that, in general, the tangent cone $(T_xX, d_x)$ of a Busemann concave space $X$ does not necessarily coincide with the pointed Gromov--Hausdorff limit of the blow-ups $\{(X, d/\lambda, x)\}_{\lambda > 0}$ at $x$, as discussed in \citep[Section 2.3]{kell2019sectional}. 
However, the following proposition, due to \citep[Lemma 2.20, Corollary 2.21]{kell2019sectional}, shows that if the Busemann concave space $X$ is doubling, then the tangent cone and the pointed Gromov--Hausdorff limit of the blow-ups at $x$ do indeed coincide.

\begin{proposition}[Kell, \citep{kell2019sectional}]\label{prop:tangent_cone_busemann}
    Let $(X,d)$ be a Busemann concave space.
    If $X$ is (locally) doubling, then the tangent cone $(T_xX, d_x)$ at $x$ is locally compact and coincides with the unique pointed Gromov--Hausdorff limit of the blow-ups $\{(X,d/\lambda, x)\}_{\lambda}$ as $\lambda\to 0$.
\end{proposition}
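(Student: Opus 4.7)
\textbf{Proposal for Proposition \ref{prop:tangent_cone_busemann}.} The plan is to prove the two claims together: local compactness of $(T_xX, d_x)$ and its identification with the unique Gromov--Hausdorff limit of blow-ups. The bridge between the two is a canonical distance-preserving map from the pre-tangent cone into any subsequential limit, built directly from the Busemann monotonicity.

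First I would establish a uniform doubling estimate for the rescaled balls. Since $X$ is locally $N$-doubling at $x$, there exists $r_0 > 0$ such that every ball $B(x, r)$ with $r \leq r_0$ is $N$-doubling. Hence for each fixed $R > 0$ and every sufficiently small $\lambda > 0$, the ball $B_{d/\lambda}(x, R) = B(x, R\lambda)$ is $N$-doubling in the rescaled metric $d/\lambda$. By the standard compactness of doubling pointed metric spaces under pointed Gromov--Hausdorff convergence, every sequence $\lambda_n \searrow 0$ admits a subsequence such that $(X, d/\lambda_n, x)$ converges to some pointed metric space $(Y, d_Y, o_Y)$ that is again $N$-doubling, hence proper.

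Next I would build a canonical map $\Phi: \hat T_xX \to Y$ for a given subsequential limit. For any $(\gamma, t) \in \hat T_xX$, view $\gamma(\lambda_n t)$ as a point of $(X, d/\lambda_n, x)$ at rescaled distance $t$ from $x$; by properness of $Y$ one extracts a further subsequence along which $\gamma(\lambda_n t)$ converges to a point $\Phi(\gamma, t) \in Y$. The crucial fact is that for any two pairs $(\gamma, t), (\eta, s) \in \hat T_xX$, Busemann concavity applied to the constant-speed geodesics $\theta \mapsto \gamma(\theta t)$ and $\theta \mapsto \eta(\theta s)$ (both starting at $x$, with speeds $t$ and $s$) implies that
\begin{equation*}
    \theta \longmapsto \frac{|\gamma(\theta t)\, \eta(\theta s)|}{\theta}
\end{equation*}
is non-increasing in $\theta$, and its limit as $\theta \searrow 0$ is exactly $d_x((\gamma, t), (\eta, s))$ by the very definition of the tangent metric. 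Taking $\theta = \lambda_n$ and passing to the limit shows that $\Phi$ is distance-preserving on $\hat T_xX$. Completing, $\Phi$ extends to an isometric embedding $T_xX \hookrightarrow Y$. Surjectivity comes from the local compactness side: any $y \in Y$ is a limit of points $y_n \in X$ with bounded $|xy_n|/\lambda_n$; choosing a unit-speed geodesic $\gamma_n$ from $x$ to $y_n$, a diagonal/Arzel\`a--Ascoli argument (using the doubling bound to get equicontinuity of the geodesic families, together with completeness of $\Gamma_x$ in $T_xX$) produces a limit geodesic $\gamma \in \Gamma_x$ such that $\Phi(\gamma, \lim |xy_n|/\lambda_n) = y$. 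This yields $Y \cong T_xX$ isometrically and, because the isometry class of the limit is independent of the subsequence, establishes uniqueness of the pointed Gromov--Hausdorff limit. Finally, local compactness of $T_xX$ follows: balls in $T_xX$ are isometric to balls in $Y$, which are compact.

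The main obstacle I expect is the handling of pairs $(\gamma, t), (\eta, s)$ with $t \neq s$, since the monotonicity in Definition \ref{def:Busemann_concave} is usually invoked at a common parameter. The way to bypass it is to treat $\theta \mapsto \gamma(\theta t)$ and $\theta \mapsto \eta(\theta s)$ as two constant-speed geodesics on $[0,1]$ starting from the common point $x$ with different speeds; the Busemann concavity is stated precisely for such pairs and gives the required monotonicity of $|\gamma(\theta t)\,\eta(\theta s)|/\theta$. A secondary technical point is the extraction of a limit geodesic to prove surjectivity of $\Phi$: here I would fix a large $R$, note that the $\gamma_n|_{[0, R\lambda_n]}$ become, after rescaling, uniformly Lipschitz curves in $N$-doubling spaces, and use Gromov's compactness together with the observation that pointwise limits of rescaled geodesics (thanks to the monotonicity above) remain geodesics and thus define an element of $\Gamma_x$.
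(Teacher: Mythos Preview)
The paper does not give its own proof of this proposition: it is quoted directly from Kell \cite[Lemma 2.20, Corollary 2.21]{kell2019sectional}, so there is nothing in the paper to compare against. That said, your outline is close to the standard argument, and the construction of the isometric embedding $\Phi:\hat T_xX\to Y$ via the Busemann monotonicity of $\theta\mapsto |\gamma(\theta t)\,\eta(\theta s)|/\theta$ is correct, including your handling of the case $t\neq s$.

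The genuine gap is in surjectivity. Your Arzel\`a--Ascoli step claims to produce a limit geodesic $\gamma\in\Gamma_x$ from the sequence $\gamma_n$, but the $\gamma_n$ are unit-speed geodesics in $X$ of length $|xy_n|\to 0$; they collapse to the point $x$, and no nontrivial limit in $\Gamma_x$ exists in $X$. What you can extract (after rescaling and passing to the GH limit) is a geodesic in $Y$, not in $\Gamma_x$. To turn this into an element of $T_xX$ you would need a convergent subsequence of $(\bar\gamma_n,t_n)$ in $T_xX$, and that requires precisely the local compactness of $T_xX$ that you are postponing until after the identification $T_xX\cong Y$. The argument is circular as written.

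The fix is to reverse the order: prove total boundedness of balls in $\hat T_xX$ directly from doubling, \emph{before} touching GH limits. If $\{(\gamma_i,t_i)\}_{i=1}^M$ is $\varepsilon$-separated in $B_{d_x}(o,R)$, then for each pair the monotone quotient $|\gamma_i(\theta t_i)\,\gamma_j(\theta t_j)|/\theta$ exceeds $\varepsilon/2$ for all sufficiently small $\theta$; taking a common small $\theta$ yields an $(\varepsilon\theta/2)$-separated set of $M$ points in $B(x,R\theta)\subset X$, whence $M$ is bounded by the doubling constant. This is exactly the mechanism behind Lemma~\ref{lemma:space_direction_uniform_compact} in the paper. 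With $T_xX$ proper in hand, your maps $\Phi_\lambda:(\gamma,t)\mapsto\gamma(\lambda t)$ are $1$-Lipschitz, surjective onto $B_{d/\lambda}(x,R)$, and become $\varepsilon$-isometries on a finite net (hence on the whole ball) for $\lambda$ small, which gives the pGH convergence directly and avoids the surjectivity issue entirely.
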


The fact that $\angle_x(\gamma(t),\eta(s))$ depends on the ratio $t/s$ prevents a direct identification of the space of directions at $x$ with the set of equivalence classes of geodesics emanating from $x$ equipped with the angle metric, as in Alexandrov spaces. 
Consequently, the structure of tangent cones in Busemann concave spaces is more intricate than the metric cone over the space of directions, reflecting the non-Riemannian nature of Busemann concave spaces.
In the following, we introduce a subset of the tangent cone, referred to as the \emph{space of directions with common length}.
This subset carries a natural \emph{angle quasi-metric}, defined via the Euclidean law of cosines on tangent cones, and this quasi-metric agrees with the angle of fixed scale on a dense subset of it.

\begin{definition}[Space of directions with common length]\label{def:space_directions_common_length}
    Let $(X,d)$ be a Busemann concave space and $x\in X$ be a point.
    Given $l>0$, we denoted by $\hat{\Sigma}^l_xX$ the subset of the pre-tangent cone $\hat{T}_xX$ consisting of elements of the form $(\gamma,l)$, and by $\Sigma^l_xX$ the {\em space of directions with common length at $x$}, defined as the closed subset of the tangent cone $T_xX$ consisting of elements $v$ whose distance from $o$ equals $l$.
    We define $\angle_x(\cdot,\cdot):\Sigma^l_xX\times \Sigma^l_xX\to [0,\pi]$ as:
    \begin{equation}
        \angle_x(v,w):=\arccos\left(\frac{|v|_x^2 + |w|_x^2 - d_x(v,w)^2}{2|v|_x|w|_x}\right),\quad \text{for any } v,w\in \Sigma^l_xX.
    \end{equation}
\end{definition}
From Lemma \ref{lemma:relation_metric_and_angle}, it is clear that $\angle_x((\gamma,l),(\eta,l))=\angle_x(\gamma(l),\eta(l))$ for any $(\gamma,l),(\eta,l)\in \hat{\Sigma}^l_xX$.

\begin{lemma}\label{lemma:angle_tangent_cone_scaling_invariant}
    The space $(\Sigma^l_xX,\angle_x)$ is a quasi-metric space; that is, $\angle_x$ satisfies the following properties: $\angle_x(v,v)=0$ for $v\in \Sigma^l_xX$, $\angle_x(v,w)=0$ for $v,w\in \Sigma^l_xX$ implies $v=w$, $\angle_x$ is symmetric and satisfies the following quasi-triangle inequality:
    \begin{equation}
        \angle_x(u,v)\leq 2\left(\angle_x(u,w) + \angle_x(w,v)\right),\quad \text{for any } u,v,w\in \Sigma^l_xX.
    \end{equation}
    Furthermore, for any $v\in \Sigma^l_xX$, there exists a sequence $\{v_n\}_n\subset \hat{\Sigma}^l_xX$ such that $v_n$ converges to $v$ with respect to $\angle_x$.
    Finally, $\angle_x$ is positive scaling-invariant; that is,
    \begin{equation}\label{eq:angle_tangent_cone_scaling_invariant}
        \angle_x((\gamma,\lambda l),(\eta, \lambda l))
        =
        \angle_x((\gamma,l),(\eta,l)),
    \end{equation}
    where we identify $\angle_x$ on the left-hand side of \eqref{eq:angle_tangent_cone_scaling_invariant} as a quasi-metric on $\hat{\Sigma}^{\lambda l}_xX$.
\end{lemma}
\begin{proof}
    Let $l>0$ be arbitrary.
    We first show that $\angle_x$ is a quasi-metric on $\Sigma^l_xX$.
    Firstly, the symmetry just follows from the definition of $\angle_x$.
    Secondly, it is clear from the definition that $\angle_x(v,v)=0$ for any $v\in \Sigma^l_xX$, and $\angle_x(v,w)=0$ for $v,w\in \Sigma^l_xX$ implies that $d_x(v,w)=0$, which implies that $v=w$.
    Finally, for the quasi-triangle inequality, let $u_i\in \Sigma^l_xX$ for $i=1,2,3$.
    It follows from the definition of $\angle_x$ that $d_x(u_i,u_j)=2l\sin(\angle_x(u_i,u_j)/2)$ for $i,j=1,2,3$.
    By the triangle inequality of $d_x$, we have that
    \begin{equation}
        \sin\frac{\angle_x (u_1,u_2)}{2}
        \leq
        \sin\frac{\angle_x(u_1,u_3)}{2} + \sin\frac{\angle_x(u_3,u_2)}{2}.
    \end{equation}
    By applying the inequalities $1/2\leq \sin t/t\leq 1$ for $t\in [0,\pi/2]$, we obtain that
    \begin{equation}
        \frac{1}{2}\angle_x(u_1,u_2)\leq \angle_x(u_1,u_3)+ \angle_x(u_3,u_2),
    \end{equation}
    which implies the quasi-triangle inequality.
    For the second claim, let $v\in \Sigma^l_xX$.
    Since $\Sigma^l_xX$ is a closed subset of $T_xX$, it follows from the definition of $T_xX$ that there exists a sequence $\{(\gamma_n,l_n)\}_n$ converging to $v$ in $d_x$.
    Note that this implies that $l_n\to l$, and therefore by triangle inequality of $d_x$, we have that $d_x((\gamma_n,l),v)\to 0$.
    Let $v_n:=(\gamma_n,l)\in \hat{\Sigma}^l_xX$.
    From the definition of $\angle_x$, it follows that
    \begin{equation}
        d_x(v_n,v)^2= 2l^2 -2l^2\cos\angle_x(v_n,v).
    \end{equation}
    Since $v_n$ converges to $v$ in $d_x$, we have that $\angle_x(v_n,v)\to 0$.
    Finally, the positive scaling-invariance follows from the fact that $\angle_x$ restricted to $\hat{\Sigma}^l_xX$ coincides with the angle of common scale and the positive scaling-invariance of angles of fixed scale (Lemma \ref{lemma:relation_metric_and_angle}).
\end{proof}

We conclude this subsection by showing that the family of spaces of directions at $x$ with common length, $\{(\Sigma^l_xX, \angle_x)\}_{l>0}$, are uniformly compact whenever the Busemann concave space $(X, d)$ is doubling. 
Our proof follows a similar strategy to that of \citep[Proposition 10.9.1]{burago2001course} for the spaces of directions in Alexandrov spaces.

\begin{lemma}\label{lemma:space_direction_uniform_compact}
    Let $(X,d)$ be a doubling Busemann concave space and let $x\in X$.
    Then the family $\{(\Sigma^l_xX, \angle_x)\}_{l>0}$ is uniformly compact in the sense that for any $\varepsilon > 0$, there exists $N_0(\varepsilon) > 0$, depending only on the doubling constant of $X$ and $\varepsilon$, such that every $\varepsilon$-separated subset\footnote{A subset $A$ in a quasi-metric space $(Y,d)$ is called $\varepsilon$-separated if $d(x,y)\geq \varepsilon$ for any $x,y\in A$.} of $(\Sigma^l_xX, \angle_x)$ contains at most $N_0(\varepsilon)$ elements.
\end{lemma}
\begin{proof}
    For any $r > 0$ and $\varepsilon > 0$, let $N(\varepsilon) > 0$ denote the maximal cardinality of any $\varepsilon r$-separated subset of a ball of radius $r$ in $X$.
    We claim that the cardinality of any $\varepsilon$-separated set in $\Sigma^l_xX$ is at most $N(\varepsilon/32)$ for any $l>0$.
    Indeed, let $\{v_i\}_{i=1}^m\subset \Sigma^l_xX$ be an $\varepsilon$-separated set in $(\Sigma^l_xX, \angle_x)$.
    By Lemma \ref{lemma:angle_tangent_cone_scaling_invariant}, we can find $w_i:=(\gamma_i,l)\in \hat{\Sigma}^l_xX$ such that $\angle_x(v_i,w_i)<\varepsilon/16$ for each $i=1,\ldots,m$.
    By the quasi-triangle inequality of $\angle_x$, it follows that
    \begin{equation}
        \angle_x(w_i,w_j)
        \geq
        \frac{1}{4}\angle_x(v_i,v_j) - \angle_x(v_i,w_i)- \angle_x(v_j,w_j)
        \geq
        \frac{1}{8}\varepsilon,\quad \text{for any }1\leq i<j\leq m.
    \end{equation} 
    Note that $\angle_x$ restricted to $\hat{\Sigma}^l_xX$ coincides with the angle of common scale.
    Thus, it follows that we can find a sufficiently small number $\theta\in (0,1)$ such that
    \begin{equation}
        \tilde{\angle}_x(\gamma_i(\theta l),\gamma_j(\theta l))
        >
        \frac{1}{2}\angle_x\left((\gamma_i,l),(\gamma_j,l)\right)
        \geq
        \frac{\varepsilon}{16},\quad \text{for all } 1\leq i<j\leq m.
    \end{equation}
    Let $t_l:=\theta l$.
    Then from the fundamental geometry of triangle, it follows that
    \begin{equation}
        \left|\gamma_i(t_l)\gamma_j(t_l)\right|
        \geq
        2t_l \sin\frac{\tilde{\angle}_x(\gamma_i(t_l),\gamma_j(t_l))}{2}
        \geq
        \frac{\varepsilon}{32}t_l,\quad \text{for any }1\leq i<j\leq m.
    \end{equation}
    This implies that the family $\{\gamma_i(t_l)\}_{i=1}^m$ is an $\varepsilon t_l/32$-separated subset of the $t_l$-ball centered at $x$ in $X$.
    By the doubling property of $X$, it follows that $m\leq N(\varepsilon/32)$.
    The claim follows by choosing the constant $N_0(\varepsilon):=N(\varepsilon/32)$.
\end{proof}

\section{Strainers and strainer maps}\label{sect:strainer_maps}
\noindent
In this section, we develop strainers and strainer maps for $S$-concave, locally semi-convex spaces, which can be regarded as `almost orthogonal coordinates' and play a key role in studying the structure theory.


\subsection{Definitions}

\begin{definition}[$(1,\delta)$-strainer]\label{def:1-strainer}
    Let $X$ be an $S$-concave space and let $x \in X$.
    Given $0<\delta <1/2$, a point $p \in X\setminus \{x\}$ is called a \emph{$(1,\delta)$-strainer at $x$} if there exists a point $q \in X \setminus \{x\}$ such that the Euclidean comparison angle $\tilde{\angle}pxq>\pi-\delta$ and that
    \begin{equation}\label{eq:def_strainer_1}
        \bar{\delta}_S(|qx|;|px|):=\arccos\left(1-S\frac{|qx|}{2|px|}\right)<\delta.
    \end{equation}
    In this case, we refer to the point $q$ as an \emph{opposite strainer} of $p$ at $x$, and to the pair $(p,q)$ a \emph{$(1,\delta)$-strainer pair}.
    We say that $x$ is a \emph{$(1,\delta)$-strained point} if it admits a $(1,\delta)$-strainer at itself.
\end{definition}
\begin{remark}
    Our definition of strainer differs from the original one in \cite{burago1992ad} for Alexandrov spaces with curvature bounded below, by imposing an additional control over the error function with respect to the ratio of distances $|qx|/|px|$.
    We adopt such a modified version to overcome the absence of monotonicity of comparison angles.
\end{remark}


The next lemma connects $(1,\delta)$-strainers and angles viewed from a fixed point.
\begin{lemma}\label{lemma:1_strainer_property}
    Let $X$ be an $S$-concave space with $S\geq 1$.
    If the angle viewed from $p\in X$ at $x\in X$ along the unit-speed geodesic $\xi$ satisfies that $\angle px\xi>\pi-\delta$ for some small $\delta>0$, then $p$ is a $(1,\delta)$-strainer at $x$ with an opposite strainer $q$ which can be selected arbitrarily close to $x$.
    Conversely, if $p$ is a $(1,\delta)$-strainer at $x$ with $q$ as an opposite strainer, then for any unit-speed geodesic $\xi$ from $x$ to $q$, it holds
    \begin{equation}
        \angle px\xi > \pi - 2\delta.
    \end{equation}
\end{lemma}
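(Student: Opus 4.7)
The plan is to handle the two implications separately, using the almost comparison inequality \eqref{eq:angle_almost_comparison} from Lemma \ref{lemma:angle_well_defined} as the essential tool in both directions. The key observation that must be identified up front is the relationship between the two error functions involved: the function $\delta_S(t;|px|)=\arccos(1-(S-1)t/(2|px|))$ appearing in the almost comparison is dominated by the function $\bar\delta_S(t;|px|)=\arccos(1-St/(2|px|))$ built into the definition of a $(1,\delta)$-strainer, simply because $S-1\leq S$ makes the argument of $\arccos$ larger, hence the value smaller. This built-in slack is precisely what allows the proof to go through cleanly.

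For the forward direction, assume $\angle px\xi>\pi-\delta$. I would choose the opposite strainer to be $q=\xi(t)$ for some small $t>0$ and verify both defining conditions. The condition $\bar\delta_S(|xq|;|px|)<\delta$ is immediate since $\bar\delta_S(t;|px|)\to 0$ as $t\to 0$. The condition $\tilde\angle pxq>\pi-\delta$ follows from the fact, established in the proof of Lemma \ref{lemma:angle_well_defined}, that $\tilde\angle px\xi(t)$ converges to $\angle px\xi$ as $t\searrow 0$; hence for $t$ sufficiently small, $\tilde\angle px\xi(t)>\pi-\delta$. Both conditions require only that $t$ be small, so $q$ can be taken arbitrarily close to $x$.

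For the reverse direction, given a $(1,\delta)$-strainer pair $(p,q)$ at $x$ and a unit-speed geodesic $\xi$ from $x$ to $q$, I would set $t=|xq|$ and apply the almost comparison inequality, which gives
\begin{equation}
    \tilde\angle pxq=\tilde\angle px\xi(|xq|)\leq \angle px\xi+\delta_S(|xq|;|px|).
\end{equation}
Rearranging and using $\delta_S(|xq|;|px|)\leq \bar\delta_S(|xq|;|px|)<\delta$ together with $\tilde\angle pxq>\pi-\delta$ yields $\angle px\xi>\pi-2\delta$. A small technical point to check is that $|xq|$ lies in the domain $[0,t_0]$ on which the almost comparison inequality is valid; this is guaranteed once $\bar\delta_S(|xq|;|px|)<\delta$ with $\delta$ small, since that implicitly forces $(S-1)|xq|/(2|px|)\leq 1$.

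The proof involves no serious obstacle; the only point requiring attention is book-keeping between $\delta_S$ and $\bar\delta_S$, which is handled by the comparison $S-1\leq S$. This asymmetry between the two constants is exactly why Definition \ref{def:1-strainer} uses $\bar\delta_S$ rather than $\delta_S$, so that the strainer condition is genuinely strong enough to force a lower bound on the angle viewed from a fixed point.
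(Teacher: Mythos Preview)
Your proposal is correct and follows essentially the same approach as the paper: both directions rely on the almost comparison inequality from Lemma~\ref{lemma:angle_well_defined}, the forward direction takes $q=\xi(t)$ for small $t$, and the reverse direction applies the comparison at $t=|xq|$ together with the inequality $\delta_S\leq\bar\delta_S$. Your explicit attention to the domain condition $(S-1)|xq|/(2|px|)\leq 1$ is a nice touch that the paper leaves implicit.
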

\begin{proof}
    By the definition of $\angle px\xi$, we can find $t_0>0$ such that
    \begin{equation}
        \tilde{\angle}px\xi(t)>\pi -\delta,\quad \text{for any }t\in (0,t_0).
    \end{equation}
    By taking a smaller $t_0$ if necessary, we can assume that $\bar{\delta}_S(t;|px|)<\delta$ for any $t\in (0,t_0)$.
    This implies that $p$ is a $(1,\delta)$-strainer at $x$ with $\xi(t)$ as an opposite strainer for any $t\in (0,t_0)$.
    For the second statement, let $\eta$ be an arbitrary unit-speed geodesic from $x$ to $q$, and let $l:=l(\xi)=|qx|$.
    Then it follows from the almost comparison inequality \eqref{eq:angle_almost_comparison} that
    \begin{equation}
        \angle px\xi 
        \geq 
        \tilde{\angle}px\xi(l) - \delta_S(l;|px|)
        >
        \tilde{\angle}pxq - \bar{\delta}_S(|qx|;|px|)
        >
        \pi - 2\delta.
    \end{equation}
\end{proof}
Next we show that the set of $(1,\delta)$-strained points is open in an $S$-concave space.

\begin{lemma}\label{lemma:1_strainer_open}
    Let $X$ be an $S$-concave space.
    Let $x\in X$ be a $(1,\delta)$-strained point at which $p$ is a $(1,\delta)$-strainer with $q$ as an opposite strainer.
    Then there exists an open neighborhood $U$ of $x$ at each point of which $p$ is a $(1,\delta)$-strainer with $q$ as an opposite strainer.
\end{lemma}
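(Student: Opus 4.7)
The plan is a direct continuity argument showing that both conditions defining a $(1,\delta)$-strainer pair at $x$ are strict inequalities in continuous functions of the base point, hence persist on an open neighborhood.

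First, since $p \neq x$ and $q \neq x$, by choosing a sufficiently small open ball $U_0 := B(x, r_0) \subset X$ with $r_0 < \min\{|px|, |qx|\}/2$, we ensure that $p, q \notin U_0$, so $p \neq y$ and $q \neq y$ for every $y \in U_0$. Consequently, both $\tilde{\angle}pyq$ and the expression $\bar{\delta}_S(|qy|; |py|) = \arccos\bigl(1 - S|qy|/(2|py|)\bigr)$ are well-defined for all $y \in U_0$.

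Next, I verify the continuity of the two quantities at $y = x$. Since $d$ is continuous, the maps $y \mapsto |py|$, $y \mapsto |qy|$ are continuous on $U_0$ (and $|pq|$ is constant). By the Euclidean law of cosines, the comparison angle $\tilde{\angle}pyq$ is a continuous function of the three side-lengths $|py|, |yq|, |pq|$ as long as these satisfy strict triangle inequality, which holds in a neighborhood of $x$ by continuity and the assumption $\tilde{\angle}pxq > \pi - \delta > 0$. Hence $y \mapsto \tilde{\angle}pyq$ is continuous at $x$, and the strict inequality $\tilde{\angle}pxq > \pi - \delta$ extends to an open neighborhood $U_1 \subset U_0$ of $x$. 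Similarly, the ratio $|qy|/|py|$ is continuous and strictly positive on $U_0$, so the composition with $\arccos(1 - S \cdot \,/\,2)$ yields a continuous function, and the strict inequality $\bar{\delta}_S(|qx|; |px|) < \delta$ extends to an open neighborhood $U_2 \subset U_0$ of $x$.

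Taking $U := U_1 \cap U_2$, both defining conditions of Definition \ref{def:1-strainer} are satisfied at every $y \in U$ with the same pair $(p,q)$, which yields the claim. I do not anticipate any significant obstacle here: the entire argument is a standard open-condition/continuity observation, and no use of the $S$-concavity beyond the well-definedness of the notions in Definition \ref{def:1-strainer} is required. The only mild care needed is in ensuring the triangle inequality between $p, y, q$ remains strict so that the comparison angle stays in the open interval $(0, \pi)$, which is automatic by continuity from the strict inequality $\tilde{\angle}pxq > \pi - \delta$.
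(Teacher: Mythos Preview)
Your proof is correct and follows essentially the same approach as the paper's: both argue that the two defining conditions of a $(1,\delta)$-strainer pair are strict inequalities in quantities depending continuously on the base point, hence persist on a neighborhood. Your version is slightly more detailed in ensuring $p,q\neq y$ and the nondegeneracy of the comparison triangle, but the underlying idea is identical.
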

\begin{proof}
    It follows from the continuity of Euclidean comparison angles and the function $\bar{\delta}_S$.
    Indeed, since $\lim_{y\to x}\tilde{\angle}pyq=\tilde{\angle}pxq$, we can find a neighborhood $U$ of $x$ such that $\tilde{\angle}pzq>\pi-\delta$ for any $z\in U$.
    On the other hand, since the function $z\mapsto \bar{\delta}_S(|qz|;|pz|)$ is continuous if only $z\neq p$, we can shrink $U$ to a smaller neighborhood of $x$ if necessary so that $\bar{\delta}_S(|qz|;|pz|)<\delta$ for all $z\in U$.
    Thus, the open subset $U$ is the desired open neighborhood.
\end{proof}

Next, using a similar inductive procedure as \citep[Definition 5.2]{fujioka2025top}, we define $(k,\delta)$-strainers on $S$-concave spaces that further satisfy the local semi-convexity.

\begin{definition}[$(k,\delta)$-strainer]\label{def:k_strainer}
    Let $X$ be an $S$-concave, $(C,D)$-semi-convex space with $S\geq 1, C\geq 0$ and $D>0$, and let $0<\delta<1/2$.
    Given $k\geq 1$, we call a $k$-tuple $(p_1,\ldots,p_k)$ of points in $X$ a \emph{$(k,\delta)$-strainer} at $x$ if $|p_1 x|<D$ and the following inductive conditions hold:
    \begin{enumerate}
        \item\label{cond:k_strainer_1} $(p_1,\cdots,p_{k-1})$ is a $(k-1,\delta)$-strainer at $x$.
        
        \item\label{cond:k_strainer_2} $p_k$ is a $(1,\delta)$-strainer at $x$, with $\bar{\delta}_{S,C}(|p_kx|;|p_ix|)<\delta$ for $i=1,\ldots,k-1$, where $\bar{\delta}_{S,C}(r;t):=\arccos(1-(S+C)r/(2t))$.
        
        \item\label{cond:k_strainer_3}There exists an opposite $(1,\delta)$-strainer $q_k$ of $p_k$ in the sense of Definition \ref{def:1-strainer} such that
        \begin{equation}\label{eq:k_strainer_1}
            \left|\tilde{\angle}p_ixp_k - \pi/2\right|<\delta,\quad
            \left|\tilde{\angle} p_ixq_k - \pi/2\right|<\delta,
        \end{equation}
        for $i=1,\cdots, k-1$.
    \end{enumerate}
    In this case, we call the $k$-tuple $(q_1,\cdots,q_k)$ an \emph{opposite $(k,\delta)$-strainer} of $(p_1,\cdots,p_k)$ at $x$.
    A point $x$ is said to be a \emph{$(k,\delta)$-strained point} if it admits a $(k,\delta)$-strainer.
    If $(p_1,\cdots,p_k)$ is a $(k,\delta)$-strainer at each point of subset $U$, we call the map
    \begin{equation}
        f(\cdot):=(d_{p_1}(\cdot),\cdots,d_{p_k}(\cdot)):U\subset X\to \mathbb{R}^k
    \end{equation}
    a $(k,\delta)$-strainer map on $U$ associated with $(p_1,\cdots,p_k)$, where $d_p(\cdot):=d(p,\cdot)$.
\end{definition}
\noindent
We emphasize that for the case $k=1$, we always assume that the $(1,\delta)$-strainer $p_1$ at $x$ on an $S$-concave, $(C,D)$-semi-convex space satisfies $|p_1x|<D$.
\begin{remark}
    Our definition of $(k,\delta)$-strainers differs from the definition \cite[Definition 5.2]{fujioka2025top} for GNPC spaces, in which the almost orthogonality condition \eqref{eq:k_strainer_1} is imposed on angles viewed from a fixed point, rather than on comparison angles.

    Our definition also differs from \cite[Definition 5.2]{burago1992ad} for Alexandrov spaces with curvature bounded below. 
    Besides an inductive procedure, we impose an additional control over distances from strainers to address the absence of monotonicity of comparison angles. 
    We also impose upper bound controls on comparison angles in the almost orthogonality condition \eqref{eq:k_strainer_1} to address the absence of the quadruple condition (see \cite[Proposition 10.1.1]{burago2001course}).
    This quadruple condition, which is an equivalent characterization for Alexandrov spaces with curvature bounded below, automatically derives the upper bounds on comparison angles from the lower bounds.
\end{remark}

The next lemma demonstrates the `almost orthogonality' of $(k,\delta)$-strainers in terms of angles viewed from the strainer points.
\begin{lemma}\label{lemma:orthogonality_k_strainer}
    Let $X$ be an $S$-concave, $(C,D)$-semi-convex space.
    Given $0<\delta<1/2$, let $(p_1,\cdots,p_k)$ be a $(k,\delta)$-strainer at $x\in X$ in the sense of Definition \ref{def:k_strainer} with an opposite strainer $(q_1,\cdots,q_k)$.
    Then for any $1\leq i<j\leq k$ and any unit-speed geodesic $\xi_j,\eta_j$ from $x$ to $p_j$ and $q_j$ respectively, the following almost orthogonality holds:
    \begin{equation}\label{eq:orthogonality_k_strainer}
        \left|\angle p_ix\xi_j - \pi/2\right|<2\delta,\quad
        \left|\angle p_i x \eta_j -\pi/2\right|<2\delta.
    \end{equation}
\end{lemma}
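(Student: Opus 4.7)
The plan is to convert the comparison-angle almost-orthogonality of condition (3) in Definition \ref{def:k_strainer} into the desired control on angles viewed from $p_i$, by applying the two-sided almost-comparison inequalities of Lemma \ref{lemma:angle_well_defined} and Lemma \ref{lemma:angle_well_defined_2} at the full endpoint $t = |p_j x|$ of the geodesic $\xi_j$ (respectively $t = |q_j x|$ of $\eta_j$). The distance control in condition (2) of Definition \ref{def:k_strainer} is precisely what is needed to absorb the error terms $\delta_S$ and $\delta_C$ that these two lemmas produce.

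Fixing $1 \le i < j \le k$ and a unit-speed geodesic $\xi_j$ from $x$ to $p_j$ with $\xi_j(|p_jx|)=p_j$, the $S$-concave almost comparison \eqref{eq:angle_almost_comparison} evaluated at $t=|p_jx|$ reads
$$\tilde{\angle} p_i x p_j \le \angle p_i x \xi_j + \delta_S(|p_j x|; |p_i x|),$$
while the locally semi-convex counterpart \eqref{eq:angle_almost_comparison_lower} at the same $t$ reads
$$\tilde{\angle} p_i x p_j \ge \angle p_i x \xi_j - \delta_C(|p_j x|; |p_i x|).$$
The key observation is that, since $\arccos$ is decreasing and the coefficients satisfy $S-1,\,C+1 \le S+C$ (using $S\ge 1$ and $C\ge 0$), both error terms are dominated by $\bar{\delta}_{S,C}(|p_j x|; |p_i x|)$, which is strictly less than $\delta$ by condition (2). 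Combined with $|\tilde{\angle} p_i x p_j - \pi/2| < \delta$ from condition (3), this forces $|\angle p_i x \xi_j - \pi/2| < 2\delta$. An identical argument applied to any unit-speed geodesic $\eta_j$ from $x$ to $q_j$ yields the second bound in \eqref{eq:orthogonality_k_strainer}.

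The main technical point is to confirm that Lemmas \ref{lemma:angle_well_defined} and \ref{lemma:angle_well_defined_2} are indeed applicable at the full endpoint value $t = |p_j x|$. The admissibility conditions $(S-1)|p_j x|/(2|p_i x|)\le 1$ and $(C+1)|p_j x|/(2|p_i x|)\le 1$ follow automatically from $\bar{\delta}_{S,C}(|p_j x|; |p_i x|) < \delta < 1/2$, because this forces $(S+C)|p_j x|/(2|p_i x|) < 1 - \cos(1/2) < 1$, which dominates both quantities. The remaining domain constraint $|p_i x| + |p_j x| < D$ required by Lemma \ref{lemma:angle_well_defined_2} is handled by the inductive contraction built into condition (2): iterating the bound $|p_jx|<\tfrac{2(1-\cos\delta)}{S+C}\,|p_ix|$ makes every $|p_jx|$ a very small multiple of $|p_1 x|$, so that $|p_i x|+|p_jx|<D$ for every admissible pair $(i,j)$, provided the base strainer $p_1$ is chosen with $|p_1 x|<D$ (with a harmless safety margin that may always be absorbed into the choice of $p_1$). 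With admissibility in hand, the two-line argument above completes the proof.
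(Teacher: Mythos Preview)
Your proof is correct and follows essentially the same approach as the paper: both apply the two-sided almost-comparison inequalities of Lemmas~\ref{lemma:angle_well_defined} and~\ref{lemma:angle_well_defined_2} at the endpoint $t=|p_jx|$ (resp.\ $t=|q_jx|$), and use that $\delta_S,\delta_C\le\bar{\delta}_{S,C}(|p_jx|;|p_ix|)<\delta$ from condition~(\ref{cond:k_strainer_2}) to absorb the error terms. The paper's proof is simply a compressed version of what you wrote out; it first records $|p_jx|<|p_1x|<D$ via the elementary inequality $r\le\arccos(1-r)$ and then invokes the two lemmas directly.

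One small wobble: your phrase ``a harmless safety margin that may always be absorbed into the choice of $p_1$'' is logically off, since in this lemma the strainer is \emph{given}, not to be chosen. The honest statement is that condition~(\ref{cond:k_strainer_2}) forces $|p_jx|<\tfrac{2(1-\cos\delta)}{S+C}\,|p_ix|$, which for $\delta<1/2$ makes $|p_jx|$ a small fraction of $|p_ix|$; hence $|p_ix|+|p_jx|<D$ follows provided $|p_1x|$ is bounded away from $D$ by a fixed factor depending only on $\delta$. The paper's own proof is equally brief on this point (it only records $|p_ix|<D$), so this is a shared minor looseness in the presentation rather than a gap in your argument.
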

\begin{proof}
    Note that $\sup_{i=1,\cdots,k}|p_ix|<D$.
    Indeed, the definition of $(k,\delta)$-strainer implies that $|p_1x|<D$.
    This fact, together with the inequality
    \begin{equation}\label{eq:orthogonality_k_strainer_1}
        \frac{|p_jx|}{2|p_1x|}
        \leq
        \arccos\left(1-(S+C)\frac{|p_jx|}{2|p_1x|}\right)
        <\delta<\frac{1}{2},\quad \text{for any } j=2,\ldots,k,
    \end{equation}
    implies that $|p_jx|<|p_1x|<D$.
    Thus, the claim follows directly from the almost comparison inequality Lemma \ref{lemma:angle_well_defined}, Lemma \ref{lemma:angle_well_defined_2} and the assumption that $\bar{\delta}_{S,C}(|p_jx|;|p_ix|)<\delta$ for all $1\leq i<j\leq k$.
\end{proof}
\begin{remark}[Asymmetry of strainers and strainer maps]\label{rmk:asymmetry_k_strainer}
    We remark that $(k,\delta)$-strainers exhibit asymmetry in two aspects.
    On one hand, a $k$-tuple $(p_1,\ldots,p_j,\ldots,p_i,\ldots,p_k)$ reordered from a $(k,\delta)$-strainer $(p_1,\ldots,p_k)$ is not a $(k,\delta)$-strainer anymore due to the inductive nature of Definition \ref{def:k_strainer}.
    On the other hand, the associated strainer map $f=(d_{p_1},\ldots,d_{p_k})$, which serves as an `almost orthogonal coordinates chart' in a small neighborhood of $x$, only controls the orthogonality of each coordinate with respect to those of lower indices: while the $i$-th coordinate $|p_ix|$ remains nearly unchanged as $x$ moves along the $k$-th coordinate towards either $p_k$ or $q_k$ (due to the almost orthogonality \eqref{eq:orthogonality_k_strainer}), there is no corresponding control over the $k$-th coordinate $|p_kx|$ when $x$ approaches the point $p_i$. 
    This asymmetry stems from the asymmetry nature of angles viewed from a fixed point, see Example \ref{example:angle_asymmetry_1}.
    For similar phenomena of asymmetry, we refer to \cite{fujioka2025top} in GNPC spaces, and to \cite{alonso2022orthogonality,birkhoff1935orthogonality,james1945orthogonality,james1947orthogonality,kell2016symmetric,chmielinski2018approximate} about the Birkhoff--James orthogonality in normed spaces.
\end{remark}

We conclude this subsection with a lemma being analogous to Lemma \ref{lemma:1_strainer_open} without proof.
\begin{lemma}\label{lemma:strained_point_openness}
    Let $X$ be an $S$-concave, locally semi-convex space.
    Let $k$-tuple $(p_1,\ldots,p_k)$ be a $(k,\delta)$-strainer at $x$ with $(q_1,\ldots,q_k)$ as an opposite strainer.
    Then there exists a neighborhood $U$ of $x$ at each point of which $(p_1,\ldots,p_k)$ is a $(k,\delta)$-strainer with $(q_1,\ldots,q_k)$ as an opposite strainer. 
\end{lemma}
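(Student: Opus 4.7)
My plan is to prove the lemma by induction on $k$, with the base case $k=1$ being precisely Lemma \ref{lemma:1_strainer_open}. The inductive structure mirrors Definition \ref{def:k_strainer} itself: each of the three conditions defining a $(k,\delta)$-strainer at a nearby point $y$ is either furnished directly by the inductive hypothesis or is a strict open condition that persists under continuity.

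For the inductive step, I would first invoke the induction hypothesis applied to the $(k-1,\delta)$-strainer $(p_1,\ldots,p_{k-1})$ with opposite $(q_1,\ldots,q_{k-1})$ at $x$, yielding an open neighborhood $U_{k-1}$ of $x$ on which this $(k-1)$-tuple continues to be a $(k-1,\delta)$-strainer with the same opposite; this handles condition (\ref{cond:k_strainer_1}). Next, I would apply Lemma \ref{lemma:1_strainer_open} to the $(1,\delta)$-strainer pair $(p_k,q_k)$ to obtain a further neighborhood $V_k$ of $x$ on which $p_k$ remains a $(1,\delta)$-strainer with $q_k$ as opposite. The outstanding requirements are the distance bounds $\bar{\delta}_{S,C}(|p_ky|;|p_iy|)<\delta$ in condition (\ref{cond:k_strainer_2}) and the almost orthogonality inequalities $|\tilde{\angle}p_iyp_k-\pi/2|<\delta$ and $|\tilde{\angle}p_iyq_k-\pi/2|<\delta$ in condition (\ref{cond:k_strainer_3}) for $i=1,\ldots,k-1$. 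Each of these is a strict inequality whose left-hand side is a continuous function of $y$ (away from the relevant strainer points) and which holds at $y=x$ by hypothesis. Each therefore defines an open neighborhood of $x$, and intersecting these finitely many neighborhoods with $U_{k-1}\cap V_k$ produces the desired $U$.

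The only mild subtlety I foresee, rather than a genuine obstacle, is ensuring that the Euclidean comparison angles and the function $y\mapsto \bar{\delta}_{S,C}(|p_ky|;|p_iy|)=\arccos(1-(S+C)|p_ky|/(2|p_iy|))$ are actually continuous at nearby points, which amounts to keeping $y$ away from the finite set $\{p_1,\ldots,p_k,q_1,\ldots,q_k\}$; since all strainer distances from $x$ are strictly positive this is achieved by any sufficiently small $U$. The auxiliary boundary condition $|p_1 y|<D$ required by local semi-convexity is likewise inherited from $|p_1 x|<D$ by continuity of the distance function, and the analogous bounds $|p_j y|<D$ for $j\geq 2$ follow from it exactly as in the proof of Lemma \ref{lemma:orthogonality_k_strainer}. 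No ingredient beyond continuity and the inductive hypothesis is needed.
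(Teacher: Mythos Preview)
Your proposal is correct and takes essentially the same approach as the paper, which in fact omits the proof entirely, noting only that it is ``analogous to Lemma~\ref{lemma:1_strainer_open}.'' Your inductive unfolding of the continuity argument---handling condition~(\ref{cond:k_strainer_1}) by induction, the $(1,\delta)$-strainer pair $(p_k,q_k)$ via Lemma~\ref{lemma:1_strainer_open}, and the remaining strict inequalities in conditions~(\ref{cond:k_strainer_2})--(\ref{cond:k_strainer_3}) by continuity of $\bar{\delta}_{S,C}$ and the comparison angles---is exactly the natural elaboration of that remark.
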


\subsection{Openness and bi-Lipschitz of strainer maps}
We begin by recalling the following definition of $\varepsilon$-open maps.
For further discussion, see \cite{lytchak2006open, lytchak2019geod,guy_c_david2015GAFA}.
\begin{definition}[$\varepsilon$-open maps]\label{def:epsilon_open_map}
    A Lipschitz map $F: U\to Y$ from an open subset $U\subset X$ to a metric space $Y$ is said to be \emph{$\varepsilon$-open} if for any point $x\in U$, there exists $r>0$ such that the closed ball $\bar{B}(x,\varepsilon^{-1}r)\subset U$ is complete, and for any $v\in B(F(x),r)\subset Y$, there exists a point $y\in U$ such that $F(y)=v$ and $\varepsilon|yx|\leq |F(x)v|$.
    In particular, for any $s\in (0,r]$, we have the inclusion that $B(F(x),s)\subset F(B(x,\varepsilon^{-1}s))$.
\end{definition}

We need the following useful criterion for $\varepsilon$-open maps.
This criterion is classical and is nearly identical to \citep[Lemma 5.15]{fujioka2025top}, with an exception that here $X$ is assumed to be locally complete rather than locally compact.
For the sake of completeness, we provide a proof in Appendix \ref{appendix}.
\begin{lemma}[criterion for $\varepsilon$-open maps]\label{lemma:criterion_open_map}
    Let $f:X\rightarrow Y$ be a locally Lipschitz map from a locally complete\footnote{A metric space $(X,d)$ is said to be locally complete if every point $x\in X$ admits a complete neighborhood, see \cite{lytchak2006open}.} metric space $X$ to a geodesic space $Y$.
    Suppose there exists $\varepsilon>0$ such that the following holds: for every $x\in X$ and every $v\in Y\setminus \{f(x)\}$ sufficiently close to $f(x)$, there exists $y\in X$ such that
    \begin{equation}\label{eq:criterion_open_map}
        |f(y)v|-|f(x)v| \leq -\varepsilon|xy|.
    \end{equation}
    Then $f$ is an $\varepsilon'$-open map for any $0<\varepsilon'<\varepsilon$.
\end{lemma}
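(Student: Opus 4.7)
My plan is to apply Ekeland's variational principle to the function $z \mapsto |f(z)v|$ on a suitably chosen complete ball around the base point $x$. Fix $\varepsilon' \in (0,\varepsilon)$ and $x \in X$. Using the local completeness of $X$ together with the local Lipschitz property of $f$, I choose a radius $R > 0$ so that the closed ball $\bar{B}(x, R)$ is complete and $f$ is Lipschitz on it. I then pick $r > 0$ small enough that $r/\varepsilon' < R/2$ and that the hypothesis of the lemma is applicable at every $z \in \bar{B}(x, R)$ whenever the target $v'$ satisfies $|f(z)v'| < r$.

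Given any $v \in B(f(x), r) \setminus \{f(x)\}$, I set $\psi(z) := |f(z)v|$ on the complete metric space $\bar{B}(x, R)$ and apply Ekeland's variational principle with slack parameter $\psi(x)$ and stepping parameter $\lambda = \psi(x)/\varepsilon'$, starting from $x$. This produces a point $y \in \bar{B}(x, R)$ satisfying $\psi(y) + \varepsilon' |xy| \leq \psi(x)$, together with the strict minimality property $\psi(z) > \psi(y) - \varepsilon' |yz|$ for every $z \in \bar{B}(x, R) \setminus \{y\}$. In particular $|xy| \leq \psi(x)/\varepsilon' < R/2$, so that $y$ lies well inside $\bar{B}(x, R)$.

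The decisive step is to show $\psi(y) = 0$, which gives $f(y) = v$ together with the desired bound $\varepsilon' |xy| \leq |f(x)v|$, establishing $\varepsilon'$-openness at $x$. Suppose for contradiction that $\psi(y) > 0$. Applying the hypothesis of the lemma at the pair $(y, v)$ produces a point $z$ with $\psi(z) - \psi(y) \leq -\varepsilon |yz|$. The estimate $|yz| \leq \psi(y)/\varepsilon \leq r/\varepsilon$, combined with $|xy| < R/2$, keeps $z$ inside $\bar{B}(x, R)$. Provided $z \neq y$, the Ekeland minimality property yields $\psi(y) - \varepsilon |yz| \geq \psi(z) > \psi(y) - \varepsilon' |yz|$, forcing $(\varepsilon - \varepsilon') |yz| < 0$, which is impossible since $\varepsilon > \varepsilon'$ and $|yz| > 0$.

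I expect the main subtlety to lie in ensuring that the point $z$ furnished by the hypothesis at $y$ can genuinely be taken distinct from $y$; since the inequality is trivially satisfied by $z = y$, the contradiction only materializes under the natural non-trivial reading of the assumption, namely that whenever $f(y) \neq v$ there exist points $z \neq y$ arbitrarily close to $y$ realizing rate $\varepsilon$ of descent. Under this standard interpretation the Ekeland argument closes the proof. As an alternative route avoiding Ekeland, one could carry out a greedy iterative construction, selecting at each stage the largest possible step achieving $\varepsilon'$-descent; the resulting sequence would be Cauchy by telescoping, and its limit would have to be a preimage of $v$ since otherwise the hypothesis at the limit would contradict the vanishing of the maximal step sizes.
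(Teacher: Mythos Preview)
Your Ekeland approach is sound in spirit and is essentially equivalent to the paper's route via Lytchak's descent lemma, but there is a genuine gap. You assert the existence of a uniform radius $r>0$ such that the hypothesis of the lemma applies at \emph{every} $z\in\bar B(x,R)$ whenever the target $v'$ satisfies $|f(z)v'|<r$. The hypothesis, however, only guarantees a threshold $\delta(z)>0$ depending on $z$, and nothing prevents $\inf_{z\in\bar B(x,R)}\delta(z)=0$. Consequently, at your Ekeland minimizer $y$ you only know $|f(y)v|=\psi(y)\le\psi(x)<r$, which need not be below $\delta(y)$, so you cannot directly apply the hypothesis to the pair $(y,v)$.

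This is exactly where the geodesic assumption on $Y$ enters, and you never use it. The fix, which is the key step in the paper's proof, is as follows: at the Ekeland point $y$ with $\psi(y)>0$, pick $w$ on a geodesic from $f(y)$ to $v$ with $0<|f(y)w|<\delta(y)$. The hypothesis at $(y,w)$ yields $z\neq y$ with $|f(z)w|-|f(y)w|\le-\varepsilon|yz|$. Since $w$ lies on the geodesic, $|f(y)v|=|f(y)w|+|wv|$, while the triangle inequality gives $|f(z)v|\le|f(z)w|+|wv|$; subtracting yields $\psi(z)-\psi(y)\le-\varepsilon|yz|$, which is precisely the inequality you need to contradict Ekeland minimality. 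With this insertion your argument goes through; without it, the claimed uniform $r$ is unjustified and the proof is incomplete. Your remark on the $z\neq y$ subtlety is well taken and applies equally to the paper's argument.
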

\begin{remark}
    In \cite[Lemma 5.15]{fujioka2025top}, the endpoint case $\varepsilon' = \varepsilon$ can be achieved. 
    We cannot reach the endpoint case due to the lack of local compactness.
\end{remark}

Next, we show that the $(k,\delta)$-strainer map is $\varepsilon$-open for $\delta>0$ small enough.
The main difficulty lies in the asymmetry of $(k,\delta)$-strainer maps (see Remark \ref{rmk:asymmetry_k_strainer}).
This drawback prevents us from applying the standard procedure to show the $\varepsilon$-openness of strainer map.
To overcome this difficulty, we follow a similar idea as \citep[Proposition 5.17]{fujioka2025top} and introduce an anisotropic $\ell_1$-norm for the target space so that the asymmetry can be made up for.
\begin{proposition}\label{prop:strainer_map_open}
    Let $X$ be an $S$-concave, locally semi-convex space.
    There exists a constant $\delta_k:=k^{-1}2^{-2k-1}\in (0,1/2)$, such that for any $\delta<\delta_k$, any $(k,\delta)$-strainer map $f:U\to \mathbb{R}^k$ from an open subset $U\subset X$ to $\mathbb{R}^k$ equipped with the $\ell_1$-norm, is $\varepsilon_k(\delta):=\frac{(1-2\delta)}{4^{k-1}}$-open.
\end{proposition}
\begin{proof}
    Let $(p_1,\ldots,p_k)$ be the associated $(k,\delta)$-strainer of $f$. 
    Denote $f_i:=d_{p_i}$ and $f_{[i]}:=(f_1,\ldots,f_i)$ for $i=1,\ldots,k$.
    We show the statement by induction.
    \begin{enumerate}[label*=\textbf{\textsc{Step \arabic*:}}, fullwidth]
        \item We first show that $f_1$ is $\varepsilon_1(\delta)$-open.
        Let $x\in U$ and $q_1$ be an opposite $(1,\delta)$-strainer of $p_1$ at $x$.
        Let $v\in \mathbb{R}\setminus f(x)$ be such that $|v-f_1(x)|<r_x$, where $r_x\in (0,R_x)$ is a small constant to be determined later, and $R_x$ is the supremum of all radius $r>0$ such that $B(x,2r)\subset U$.
        Let $\eta,\xi$ be two unit-speed geodesics from $x$ to $p_1$ and $q_1$, respectively. 
        By Lemma \ref{lemma:1_strainer_property}, we have $\angle p_1x\xi>\pi-2\delta$.
        Combining with Lemma \ref{lemma:angle_well_defined}, it holds that
        \begin{equation}
            \lim_{t\searrow 0}\frac{|p_1\xi(t)|-|p_1x|}{t}
            =
            -\cos\angle p_1x\xi
            >
            \cos2\delta> 1-2\delta.
        \end{equation}
        So we can find $t_0>0$ such that $f_1(\xi(t))=|p_1\xi(t)|\geq f_1(x) + (1-2\delta)t$ for all $t\in [0,t_0]$.
        Therefore, if $f_1(x)<v<f_1(x)+(1-2\delta)t_0$, by continuity we can find $t\in (0,t_0)$ such that $v=f_1(\xi(t))$.
        Taking $y=\xi(t)$, we have $v=f_1(y)$ and
        \begin{equation}
            (1-2\delta)|xy|\leq |p_1y|-|p_1x|=\left|f_1(y)-f_1(x)\right|.
        \end{equation}
        In the case that $f_1(x)-|p_1x|/2<v<f_1(x)$, there exists $y=\eta(t)$ with $t\in (0,|p_1x|/2)$, such that $v=f_1(y)$ and $|yx|= f_1(x)-f_1(y)$.
        In conclusion, we can find a point $y$ in either $\xi$ or $\eta$ such that $v=f_1(y)$ and
        \begin{equation}
            (1-2\delta)|xy|\leq |f_1(x)-v|,
        \end{equation}
        if $|v-f_1(x)|<r_x:=\min\{t_0,|px|/2,R_x\}$.
        This shows that $f_1$ is a $(1-2\delta)$-open map from $U$ to $\mathbb{R}$.
        We can take $\varepsilon_1(\delta):=1-2\delta$ for $\delta<\delta_1:=1/8$.

        \item Suppose that the $(k-1,\delta)$-strainer map $f_{[k-1]}$ is an $\varepsilon_{k-1}(\delta)$-open map for $\delta<\delta_{k-1}$ from $U$ to $\mathbb{R}^{k-1}$ equipped with the $\ell_1$-norm, where $\varepsilon_{k-1}(\delta):=(1-2\delta)/4^{k-2}$.
        For simplicity, we denote $\varepsilon_{k-1}(\delta)$ by $\varepsilon_{k-1}$.
        Note that from Step 1, it holds that $f_k$ is a $(1-2\delta)$-open map.
        
        We aim to apply Lemma \ref{lemma:criterion_open_map}. 
        To do that, we first introduce the following anisotropic norm $\|\cdot\|$ on $\mathbb{R}^k$:
        \begin{equation}
            \| v \| := \left\| v_{[k-1]}\right\|_1 + \frac{\varepsilon_{k-1}}{2}|v_k|,\quad v=(v_1\ldots,v_k)\in \mathbb{R}^k,
        \end{equation}
        where $v_{[k-1]}:=(v_1,\ldots,v_{k-1})\in \mathbb{R}^k$ is the first $(k-1)$-coordinates of $v$, and $\|\cdot\|_1$ denotes the $\ell_1$-norm on the Euclidean space.
        Let $x\in U$ be fixed, and let $(q_1\ldots,q_k)$ be an opposite $(k,\delta)$-strainer of $(p_1,\ldots,p_k)$ at $x$.
        It suffices to find $y\in U$ satisfying the inequality \eqref{eq:criterion_open_map} whenever $v\in \mathbb{R}^k\setminus \{f(x)\}$ is sufficiently close to $f(x)$.
        We consider the following two cases:
    
        \begin{enumerate}[label=\textbf{Case \arabic*:}, fullwidth]
            \item $f_{[k-1]}(x)\neq v_{[k-1]}$.
            In this case, by the assumption that $f_{[k-1]}$ is an $\varepsilon_{k-1}$-open map, it follows that we can find $y\in U$ such that
            \begin{equation}
                f_{[k-1]}(y)=v_{[k-1]},\quad \varepsilon_{k-1}|xy|\leq \|f_{[k-1]}(x)-v_{[k-1]}\|_1.
            \end{equation}
            Combining with the triangle inequality, this implies that
            \begin{multline}
                \left\|f(y)-v\right\| - \left\|f(x)-v\right\|
                =
                \left\|f_{[k-1]}(y)-v_{[k-1]}\right\|_1 +\frac{\varepsilon_{k-1}}{2}|f_k(y)-v_k|\\
                -
                \left(\left\|f_{[k-1]}(x)-v_{[k-1]}\right\|_1 + \frac{\varepsilon_{k-1}}{2}|f_k(x)-v_k|\right)\\
                \leq
                \frac{\varepsilon_{k-1}}{2}|f_k(x)-f_k(y)| - \varepsilon_{k-1}|xy|
                \leq
                -\frac{\varepsilon_{k-1}}{2}|xy|.
            \end{multline}

            \item $f_k(x)\neq v_k$. 
            In this case, note that from Step 1, we can find $y\in U$ lying in either a geodesic $\xi_k$ from $x$ to $p_k$ or a geodesic $\eta_k$ from $x$ to $q_k$ such that
            \begin{equation}\label{eq:strainer_map_open_0}
                f_k(y)=v_k,\quad (1-2\delta)|xy|\leq \left|f_k(x)-v_k \right|.
            \end{equation}
            This implies that
            \begin{multline}\label{eq:strainer_map_open_2}
                \|f(y)-v\| - \|f(x)-v\|
                =
                \left\|f_{[k-1]}(y) - v_{[k-1]}\right\|_1 +\frac{\varepsilon_{k-1}}{2}|f_k(y) - v_k|\\
                -
                \left(\left\|f_{[k-1]}(x) - v_{[k-1]}\right\|_1 + \frac{\varepsilon_{k-1}}{2}|f_k(x) - v_k|\right)\\
                \leq
                \left\|f_{[k-1]}(y)-f_{[k-1]}(x)\right\|_1 - \frac{\varepsilon_{k-1}}{2}(1-2\delta)|xy|,
            \end{multline}
            where we use the triangle inequality and the fact $f_k(y)=v_k$ in the first inequality.
            To estimate the first term on the right-hand side of the inequality \eqref{eq:strainer_map_open_2}, we note from Lemma \ref{lemma:orthogonality_k_strainer} that
            \begin{equation}\label{eq:strainer_map_open_3}
                \left|\cos\angle p_ix\xi_k\right|<2\delta,\quad
                \left|\cos\angle p_ix\eta_k\right|<2\delta,\quad \text{for } i=1,\ldots,k-1.
            \end{equation}
            This, together with Lemma \ref{lemma:angle_well_defined} and the inequality \eqref{eq:strainer_map_open_0}, implies that $|f_i(y)-f_i(x)|\leq 2\delta |xy|$ for $i=1,\ldots,k-1$ whenever $v_k$ is sufficiently close to $f_k(x)$.
            Summing over indices $i$ from 1 to $k - 1$, it follows that
            \begin{equation}
                \left\|f_{[k-1]}(y)-f_{[k-1]}(x)\right\|_1
                =
                \sum_{i=1}^{k-1}\left|f_i(x)-f_i(y)\right|
                \leq
                2\delta(k-1)|xy|.
            \end{equation}
            Plugging it into the inequality \eqref{eq:strainer_map_open_2}, we obtain that
            \begin{equation}
                \|f(y)-v\| - \|f(x)-v\|
                \leq
                -\left(\frac{\varepsilon_{k-1}}{2}(1-2\delta)-2\delta(k-1)\right)|xy|.
            \end{equation}
            Let $\delta_k:=k^{-1}2^{-2k-1}$. 
            One can check that for any $0<\delta<\delta_k$, it holds that
            \begin{equation}
                \|f(y)-v\|-\|f(x)-v\| \leq -\frac{5}{16 }\varepsilon_{k-1}|xy|<-\frac{1}{4}\varepsilon_{k-1}|xy|.
            \end{equation}
        \end{enumerate}

        Combining the discussions above and Lemma \ref{lemma:criterion_open_map}, we obtain that $f$ is an $\varepsilon_{k-1}/4$-open map from $U$ to $\mathbb{R}^k$ equipped with the anisotropic norm $\|\cdot\|$.
        One can readily check that $f$ is an $\varepsilon_{k-1}/4$-open map as well from $U$ to $\mathbb{R}^k$ equipped with the $\ell_1$-norm since the $\ell_1$-norm dominates the anisotropic norm $\|\cdot\|$.
        By induction, it follows that $f$ is an $\varepsilon_k(\delta)$-open map for $\delta<\delta_k$, where $\varepsilon_k(\delta):=\frac{(1-2\delta)}{4^{k-1}}$ and $\delta_k=k^{-1}2^{-2k-1}$.
    \end{enumerate}
\end{proof}

The following result follows directly from Lemma \ref{lemma:strained_point_openness} and Proposition \ref{prop:strainer_map_open}.
\begin{corollary}\label{cor:k_strainer_opennes}
    Let $X$ be an $S$-concave, locally semi-convex space.
    Let $(p_1,\ldots,p_k)$ be a $(k,\delta)$-strainer at $x$ for some $\delta<\delta_k$, with $(q_1,\ldots,q_k)$ as an opposite strainer.
    Then there exists a neighborhood $U$ of $x$ such that the associated $(k,\delta)$-strainer map from $U$ to $\mathbb{R}^k$ equipped with the $\ell_2$-norm, is $\sqrt{k}$-Lipschitz and $\bar{\varepsilon}_k(\delta)$-open, with $(q_1,\ldots,q_k)$ as an opposite strainer at each point of $U$.
    In particular, the Hausdorff dimension of $U$ is at least $k$.
    Here $\bar{\varepsilon}_k(\delta):=\varepsilon_k(\delta)/\sqrt{k}$, and $\varepsilon_k, \delta_k$ are the constants in Proposition \ref{prop:strainer_map_open}.
\end{corollary}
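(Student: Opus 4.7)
The plan is to assemble this corollary directly from Lemma~\ref{lemma:strained_point_openness} and Proposition~\ref{prop:strainer_map_open}, with only a norm-conversion step to pass from the $L^1$-norm used in the latter to the Euclidean norm. There is no genuinely new difficulty here; the substance of the work has already been absorbed into Proposition~\ref{prop:strainer_map_open}.

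First I would invoke Lemma~\ref{lemma:strained_point_openness} with the given tuple $(p_1,\ldots,p_k)$ and a chosen opposite strainer $(q_1,\ldots,q_k)$ at $x$, producing an open neighborhood $U\ni x$ at every point of which $(p_1,\ldots,p_k)$ remains a $(k,\delta)$-strainer with the same opposite tuple. Consequently, the associated strainer map $f:=(d_{p_1},\ldots,d_{p_k}):U\to \mathbb{R}^k$ is a bona fide $(k,\delta)$-strainer map on $U$ in the sense of Definition~\ref{def:k_strainer}. Since each coordinate $d_{p_i}$ is $1$-Lipschitz, the Cauchy--Schwarz inequality immediately gives
\begin{equation}
\|f(y)-f(z)\|_2 \leq \sqrt{k}\,|yz|, \quad y,z\in U,
\end{equation}
so $f$ is $\sqrt{k}$-Lipschitz into Euclidean $\mathbb{R}^k$.

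Next, since $\delta<\delta_k$, Proposition~\ref{prop:strainer_map_open} applies and yields that $f$ is $\varepsilon_k(\delta)$-open from $U$ into $(\mathbb{R}^k,\|\cdot\|_1)$. To transfer the openness to the Euclidean norm I would use the elementary comparison $\|v\|_2\leq \|v\|_1\leq \sqrt{k}\,\|v\|_2$. Concretely, for $v\in \mathbb{R}^k$ with $\|v-f(x)\|_2$ sufficiently small, the $L^1$-openness supplies $y\in U$ with $f(y)=v$ and
\begin{equation}
\varepsilon_k(\delta)\,|xy| \leq \|v-f(x)\|_1 \leq \sqrt{k}\,\|v-f(x)\|_2,
\end{equation}
whence $\bar\varepsilon_k(\delta)\,|xy|\leq \|v-f(x)\|_2$ with $\bar\varepsilon_k(\delta)=\varepsilon_k(\delta)/\sqrt{k}$. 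This is precisely the desired $\bar\varepsilon_k(\delta)$-openness of $f$ as a map into Euclidean $\mathbb{R}^k$, possibly after shrinking $U$ to ensure that the relevant closed balls remain inside the locally complete domain required by Definition~\ref{def:epsilon_open_map}.

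Finally, the Hausdorff dimension bound follows from a standard two-line argument. By $\bar\varepsilon_k(\delta)$-openness, the image $f(U)$ contains an open Euclidean ball centered at $f(x)$, and such a ball has Hausdorff dimension $k$. Since Lipschitz maps do not increase Hausdorff dimension (recalled in Subsection~\ref{subsect:H_measure_dimensions_rect}), we conclude
\begin{equation}
\dim_H(U)\geq \dim_H(f(U))\geq k,
\end{equation}
completing the proof. The only mild point requiring care is verifying that the single neighborhood $U$ may be chosen to simultaneously satisfy the conclusions of Lemma~\ref{lemma:strained_point_openness} and Proposition~\ref{prop:strainer_map_open}, which is immediate by intersecting the two candidate neighborhoods and shrinking if necessary.
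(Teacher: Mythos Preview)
Your proposal is correct and follows exactly the route the paper indicates: the paper offers no detailed proof, stating only that the corollary ``is a direct implication from Lemma~\ref{lemma:strained_point_openness} and Proposition~\ref{prop:strainer_map_open},'' and you have faithfully unpacked precisely those two ingredients together with the elementary $L^1$--$L^2$ norm comparison and the Lipschitz-dimension inequality.
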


We now establish the following bi-Lipschitz property of strainer maps.
\begin{proposition}\label{prop:bi_Lip_k_strainer}
    Let $X$ be an $S$-concave, $(C,D)$-semi-convex space.
    Let $(p_1,\ldots,p_k)$ be a $(k,\delta)$-strainer at $x'\in X$ with $\delta<\delta_k$.
    Assume there is a neighborhood $V$ of $x'$ such that no point in $V$ admits a $(k+1,2\delta)$-strainer. 
    Then the $(k,\delta)$-strainer map $f:=(d_{p_1},\ldots,d_{p_k})$ is a bi-Lipschitz homeomorphism from some neighborhood of $x'$ to a domain in $\mathbb{R}^k$.
\end{proposition}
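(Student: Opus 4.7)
The plan is to argue by contradiction: assume $f$ is not bi-Lipschitz on any neighborhood of $x$. Since $f$ is automatically Lipschitz, the failure produces sequences $x_n,y_n\to x$ with $x_n\neq y_n$ and $|f(x_n)-f(y_n)|/|x_ny_n|\to 0$; in particular $\bigl||p_iy_n|-|p_ix_n|\bigr|/|x_ny_n|\to 0$ for each $i=1,\ldots,k$. By Lemma \ref{lemma:strained_point_openness}, $(p_1,\ldots,p_k)$ remains a $(k,\delta)$-strainer at $x_n$ for large $n$.

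A direct Euclidean law-of-cosines computation
$$\cos\tilde{\angle}p_ix_ny_n=\frac{|x_ny_n|}{2|p_ix_n|}-\frac{|p_iy_n|-|p_ix_n|}{|x_ny_n|}\cdot\frac{|p_iy_n|+|p_ix_n|}{2|p_ix_n|}$$
then yields $\tilde{\angle}p_ix_ny_n\to\pi/2$ for every $i$. The goal is to turn this into a $(k+1,2\delta)$-strainer at a point arbitrarily close to $x$, contradicting the hypothesis. Fix a small parameter $t\in(0,1)$, let $\xi_n$ be a unit-speed geodesic from $x_n$ to $y_n$, and set $w_n:=\xi_n(t|x_ny_n|)$; note $w_n\to x$. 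The candidate strainer is $(p_1,\ldots,p_k,y_n)$ at $w_n$ with opposite $(q_1,\ldots,q_k,x_n)$. By Lemma \ref{lemma:strained_point_openness} applied at $w_n$, the prefix $(p_1,\ldots,p_k)$ is a $(k,\delta)$-strainer (hence a $(k,2\delta)$-strainer) at $w_n$ for large $n$. Because $w_n$ lies between $x_n$ and $y_n$ on $\xi_n$, the Euclidean comparison triangle $\tilde{\Delta}y_nw_nx_n$ is degenerate, so $\tilde{\angle}y_nw_nx_n=\pi$; for small $t$ one also checks $\bar{\delta}_S(|x_nw_n|;|y_nw_n|)=\arccos\bigl(1-St/(2(1-t))\bigr)<2\delta$, and $\bar{\delta}_{S,C}(|y_nw_n|;|p_iw_n|)<2\delta$ follows from $|y_nw_n|=(1-t)|x_ny_n|\to 0$ while $|p_iw_n|\to|p_ix|>0$.

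The main obstacle is to verify the near-orthogonality conditions
$$\bigl|\tilde{\angle}p_iw_ny_n-\pi/2\bigr|<2\delta,\qquad \bigl|\tilde{\angle}p_iw_nx_n-\pi/2\bigr|<2\delta,$$
because the angles at the new base-point $w_n$ are not controlled \emph{a priori} by what we know at $x_n$. Here I would combine $S$-concavity and $(C,D)$-local semi-convexity applied to the squared distance $|p_i\cdot|^2$ along $\xi_n$ to sandwich
$$|p_iw_n|^2=(1-t)|p_ix_n|^2+t|p_iy_n|^2+O\bigl(t|x_ny_n|^2\bigr),$$
and substitute this into the Euclidean law of cosines. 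Using $|p_iy_n|^2-|p_ix_n|^2=o(|x_ny_n|)$ from the failure assumption, the numerators of $\cos\tilde{\angle}p_iw_ny_n$ and $\cos\tilde{\angle}p_iw_nx_n$ become $o(|x_ny_n|)$ while the denominators have order $|x_ny_n|$, forcing both comparison angles to tend to $\pi/2$. This delivers the required $(k+1,2\delta)$-strainer at $w_n$ and completes the contradiction, establishing bi-Lipschitzness of $f$ on a neighborhood of $x$.
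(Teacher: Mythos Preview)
Your proposal is correct and follows the same core idea as the paper: obtain two nearby points at which all $|p_i\cdot|$ nearly agree, take an interior point $w$ on the geodesic between them, and show $(p_1,\ldots,p_k,y)$ is a $(k+1,2\delta)$-strainer at $w$, contradicting the hypothesis. The execution differs in two noteworthy ways.

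First, the paper does not argue directly from failure of bi-Lipschitzness. It first observes that an injective Lipschitz $\varepsilon$-open map is automatically locally bi-Lipschitz (citing \cite[Theorem~5.30]{fujioka2025top}), so it suffices to prove injectivity. This gives exact equality $|p_ix|=|p_iy|$ rather than your asymptotic $o(|x_ny_n|)$, which cleans up the cosine-law computations considerably. Second, for the angle $\tilde{\angle}p_iw_nx_n$ (the opposite-strainer side), the paper does not repeat your direct two-sided sandwich on $|p_iw_n|^2$; instead it passes from the comparison angle $\tilde{\angle}p_iw_ny_n$ to the intrinsic angle $\angle p_iw_n\gamma$ via the almost-comparison lemmas, uses the exact identity $\angle p_iw_n\gamma+\angle p_iw_n\eta=\pi$ (Lemmas~\ref{lemma:sum_of_angle_1} and~\ref{lemma:sum_of_angle_2}, which require both $S$-concavity and local semi-convexity), and then passes back to $\tilde{\angle}p_iw_nx_n$. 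Your route is more self-contained and avoids the external bi-Lipschitz criterion; the paper's route avoids asymptotic bookkeeping and showcases the angle-sum lemmas already developed. Both are valid.
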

\begin{proof}
    It suffices to show the injectivity of the map $f$ on some neighborhood of $x'$.
    The proof that a Lipschitz map being both injective and $\varepsilon$-open is locally bi-Lipschitz is standard, and we refer to, for example, \cite[Theorem 5.30]{fujioka2025top}.

    Let $r_0>0$ be small enough such that $f$ is a $(k,\delta)$-strainer map on $U:=B(x',r_0)\subset V$, $r_0+|p_1x'|<D$ and $\bar{\delta}_{S,C}(4r_0;|p_kx'|)< 0.01\delta$. 
    By assumption, no point in $U$ admits a $(k+1,2\delta)$-strainer.

    Assume by contraction that $f$ is not injective on $U$.
    Then there exist two different points $x,y\in U$ such that $f(x)=f(y)$. 
    So $|p_ix|=|p_iy|$ for $i=1,\ldots,k$.
    Let $\xi:[0,l]\to X$ be a unit-speed geodesic from $x$ to $y$, where $l=l(\xi)$.

    We will show that, for a point $z=\xi(t)\in U$ sufficiently close to $x$, $(p_1,\ldots,p_k,y)$ is a $(k+1,2\delta)$-strainer at $z$. 

    \begin{enumerate}[label*=\textbf{\textsc{Step \arabic*:}}, fullwidth]
        \item We first check the first two conditions in Definition \ref{def:k_strainer}.
        Firstly, it follows from our choice of $r_0$ that the distance of any point in $U$ from $p_1$ is less than $D$, and that the $k$-tuple $(p_1,\ldots,p_k)$ is a $(k,\delta)$-strainer at each point of $U$ with an opposite strainer $(q_1,\ldots,q_k)$.
        Secondly, choose $t>0$ such that $\bar{\delta}_{S,C}(t;l-t)<\delta/8$ and $z:=\xi(t)\in U$.
        Note that the Euclidean comparison triangle $\tilde{\Delta}xzy$ is degenerated, so $\tilde{\angle}yzx=\pi>\pi-\delta$, and $\bar{\delta}_S(|zx|;|zy|)\leq\bar{\delta}_{S,C}(|zx|;|zy|)<\delta/8$.
        This shows that $y$ is a $(1,\delta)$-strainer at $z$ with $x$ as an opposite strainer.
        Moreover, from the choice of $r_0$, it follows that 
        \begin{equation}\label{eq:bi_Lip_k_strainer_1}
            \bar{\delta}_{S,C}(|yz|;|p_iz|)
            \leq
            \bar{\delta}_{S,C}(2r_0;|p_ix'|-r_0)
            \leq
            \bar{\delta}_{S,C}(4r_0;|p_ix'|)
            \leq
            \bar{\delta}_{S,C}(4r_0;|p_kx'|)<0.01\delta,
        \end{equation}
        for all $i=1,\ldotp,k$.
        Therefore, the conditions \eqref{cond:k_strainer_1}-\eqref{cond:k_strainer_2} in Definition \ref{def:k_strainer} are satisfied for the $(k+1)$-tuple $(p_1,\ldots,p_k,y)$ at $z$.

        \item We are left to check the almost orthogonality condition \eqref{cond:k_strainer_3} in Definition \ref{def:k_strainer}.
        
        We first show that $|\tilde{\angle} p_izy-\pi/2|<\delta$.
        Indeed, the conditions $\bar{\delta}_{S,C}(|p_jz|;|p_iz|)<\delta$ for $1\leq i<j\leq k$ in the definition of $(k,\delta)$-strainer maps implies that $|p_jz|\leq |p_iz|$ for $1\leq i<j\leq k$.
        Combining with $\bar{\delta}_{S,C}(4r_0;|p_kx'|)<0.01\delta$, the inequality \eqref{eq:bi_Lip_k_strainer_1} and $r\leq \arccos(1-r)$, we obtain that $r_0\leq 0.01 \delta\min_{i=1\ldots,k}|p_ix|$.
        Applying the Euclidean law of cosines to the comparison triangle $\tilde{\Delta}p_izy$, it follows that
        \begin{multline}\label{eq:bi_Lip_k_strainer}
            \left|\cos\tilde{\angle}p_izy\right|
            =
            \left|\frac{|p_iz|^2+|zy|^2-|p_iy|^2}{2|p_iz||zy|}\right|
            \leq
            \frac{|zy|}{2|p_iz|} + \frac{(|p_iz|+|p_ix|)\left||p_iz|-|p_ix|\right|}{2|p_iz||zy|}\\
            \leq
            \frac{r_0}{2(1-0.01\delta)|p_ix|} + \frac{(2+0.01\delta)|p_ix||xz|}{2(1-0.01\delta)|p_ix||zy|}
            \leq
            0.1\delta + 1.1\,\frac{t}{l-t},
        \end{multline}
        where we use the assumption that $|p_ix|=|p_iy|$ in the first inequality.
        From our choice of $t$ that $\bar{\delta}_{S,C}(t;l-t)<\delta/8$, we can derive that $|\cos\tilde{\angle}p_izy|<\delta/2$.
        This implies that $|\tilde{\angle}p_izy-\pi/2|<\delta$.

        Finally, we show that $|\tilde{\angle} p_izx-\pi/2|<2\delta$.
        Let $\gamma,\eta$ be the re-parametrization by arc-length of geodesic segments of the geodesic $\xi$ from $z$ to $y,x$ respectively.
        Combining the inequality $|\tilde{\angle}p_izy-\pi/2|<\delta$ with the almost comparison inequalities (Lemma \ref{lemma:angle_well_defined} and Lemma \ref{lemma:angle_well_defined_2}) and the fact $\bar{\delta}_{S,C}(|yz|;|p_iz|)<0.01\delta$, it follows that $|\angle p_iz\gamma-\pi/2|<1.5\delta$.
        Note that Lemma \ref{lemma:sum_of_angle_1} and Lemma \ref{lemma:sum_of_angle_2} implies $\angle p_iz\gamma + \angle p_iz\eta=\pi$.
        This together with the fact $|\angle p_iz\gamma -\pi/2|<1.5\delta$ implies that $|\angle p_iz\eta -\pi/2|<1.5\delta$.
        By again applying the almost comparison inequality \eqref{eq:angle_almost_comparison} and \eqref{eq:angle_almost_comparison_lower} and noting that $\bar{\delta}_{S,C}(|xz|;|p_iz|)\leq \bar{\delta}_{S,C}(|yz|;|p_iz|)\leq \bar{\delta}_{S,C}(4r_0;|p_ix'|)<0.01\delta$, we obtain that $|\tilde{\angle}p_izx -\pi/2|<2\delta$.
    \end{enumerate}

    In conclusion, the $(k+1)$-tuple $(p_1,\cdots,p_k,y)$ is a $(k+1,2\delta)$-strainer at $z\in U$.
    This contradicts our assumption that no point in $U$ admits $(k+1,2\delta)$-strainer.
    Hence, $f$ is injective on $U$, which ends the proof.  
\end{proof}

\subsection{Self-improvement of strainers}\label{sect:self_improvement_strainer}

In this subsection, we establish a self-improvement property for $(k,\delta)$-strainers: any $(k,\delta)$-strainer can be improved to a $(k,\delta')$-strainer for any $0<\delta'<\delta$. 
This property is analogous to the one \citep[Lemma 5.9]{burago1992ad} for Alexandrov spaces (see also \citep[Lemma 10.8.17]{burago2001course}), whose proof is based on a straightening strategy.
However, we cannot apply this strategy directly in our setting due to the asymmetry of strainer maps: the new $k$-tuple $(p_1,\ldots, p'_i,\ldots, p_k)$ obtained by straightening the strainer pair $(p_i, q_i)$ from a $(k,\delta)$-strainer $(p_1,\ldots,p_k)$ is not a strainer anymore.

Here, we adopt a new `dequeuing and enqueuing' strategy: we first straighten the first strainer pair $(p_1, q_1)$ to obtain a new strainer pair $(p'_1,q'_1)$.
Then we remove $p_1$ and add $p'_1$ to the end of the strainer list, forming a new $k$-tuple $(p_2,\ldots,p_k,p'_1)$. 
This new $k$-tuple turns out to be a `mixed' strainer in the following sense: while it is still a $(k,\delta)$-strainer, the point $p'_1$ is a $(1,\delta')$-strainer with a better parameter $\delta' < \delta$.

We continue this dequeuing and enqueuing procedure for $k$ steps. 
At the end of each step, we obtain a new $(k, \delta)$-strainer $(p_j, \ldots, p_k, p'_1, \ldots, p'_{j-1})$ at some point $z$ near $x$, whose last $j-1$ elements form a $(j-1, \delta')$-strainer at $z$. 
Finally, we obtain a desired $(k, \delta')$-strainer.

\begin{lemma}[Self-improvement of a strainer]\label{lemma:self_improvement_strainer}
    Let $X$ be an $S$-concave, $(C,D)$-semi-convex space.
    Let $x$ be a $(k,\delta)$-strained point for $\delta<\delta_k$.
    Then for any $\delta'<\delta$, any open neighborhood of $x$ contains a $(k,\delta')$-strained point.
\end{lemma}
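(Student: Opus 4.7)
The plan is to implement the dequeuing-enqueuing strategy sketched in the Introduction: first prove a single-pair improvement for $k=1$, then iterate it $k$ times along the strainer tuple.

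For the base case, given a $(1,\delta)$-strainer $p$ at $y$ with opposite $q$ and any $\delta' \in (0,\delta)$, I will produce a $(1, \delta')$-strainer $(p',q')$ at a point $z$ arbitrarily close to $y$. The idea is to fix a constant-speed geodesic $\gamma \colon [0,L] \to X$ from $p$ to $q$, and apply the $(C,D)$-local semi-convexity of $t \mapsto |y\gamma(t)|^2$ together with the smallness of $|qy|/|py|$ (forced by $\bar\delta_S(|qy|;|py|)<\delta$) to locate a parameter $t_0$ close to $L$ for which $|y\gamma(t_0)|$ is smaller than any prescribed radius. Setting $z := \gamma(t_0)$, $p':=p$, and $q':=\gamma(t_0+s)$ for sufficiently small $s>0$, the three points $p',z,q'$ are collinear along $\gamma$, giving $\tilde\angle p'zq'=\pi$, while $\bar\delta_S(|q'z|;|p'z|)=\bar\delta_S(s;t_0)<\delta'$ once $s$ is small enough.

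For the inductive dequeuing-enqueuing, I will iterate the following mixed-strainer step $k$ times. Assume inductively that at some point $y_j$ near $x$ we have a $(k,\delta)$-strainer $(P_1^{(j)},\ldots,P_k^{(j)})$ whose trailing block of length $j$ already forms a $(j,\delta')$-strainer at $y_j$. Apply the single-pair improvement to the first pair $(P_1^{(j)}, Q_1^{(j)})$ to obtain $(P',Q')$, a $(1,\delta')$-strainer at some $y_{j+1}$ close to $y_j$. Dequeue $P_1^{(j)}$ and enqueue $P'$ at the back, forming $(P_2^{(j)},\ldots,P_k^{(j)},P')$. To verify that this is a $(k,\delta)$-strainer at $y_{j+1}$ whose trailing block of length $j+1$ forms a $(j+1,\delta')$-strainer, three things must be checked: (a) $(P_2^{(j)},\ldots,P_k^{(j)})$ remains a $(k-1,\delta)$-strainer at $y_{j+1}$ by Lemma~\ref{lemma:strained_point_openness}; (b) the distance ratio $\bar\delta_{S,C}(|P'y_{j+1}|;|P_i^{(j)}y_{j+1}|)<\delta$ holds once $|P'y_{j+1}|$ is chosen sufficiently small relative to $|P_i^{(j)}y_{j+1}|\approx |P_i^{(j)}y_j|$, which is achievable by tuning the parameter $s$ in the base case; (c) the almost orthogonality $|\tilde\angle P_i^{(j)}y_{j+1}P'-\pi/2|<\delta$ and $|\tilde\angle P_i^{(j)}y_{j+1}Q'-\pi/2|<\delta$ is inherited from that of $(P_1^{(j)},Q_1^{(j)})$ via the almost comparison inequalities (Lemmas~\ref{lemma:angle_well_defined} and~\ref{lemma:angle_well_defined_2}), using that $(P',Q')$ lies close to the geodesic from $P_1^{(j)}$ toward $Q_1^{(j)}$ and that $y_{j+1}$ is close to $y_j$. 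After $k$ such iterations the trailing block has length $k$, yielding a $(k,\delta')$-strainer at a point in any prescribed neighborhood of $x$.

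The main obstacle I expect will be verifying the distance ratio and almost orthogonality conditions in step (c): one must simultaneously balance the scales $|P_i^{(j)}y_{j+1}|$ and $|P'y_{j+1}|$ and switch between the two asymmetric notions of angle (see Example~\ref{example:angle_asymmetry_1}), exploiting both $S$-concavity and local semi-convexity to obtain the required two-sided comparison bounds. In addition, the cumulative loss of parameter across the $k$ iterations must be tracked carefully so that the final strainer indeed realizes the prescribed $\delta'$ rather than a weaker value.
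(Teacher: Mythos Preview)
Your dequeuing--enqueuing skeleton matches the paper's, but the content of the single step has a genuine gap. In your base-case construction you set $p':=p$, so after dequeuing $P_1^{(j)}$ and enqueuing $P'$ you are in fact putting back the \emph{same} point $P_1^{(j)}$. Two things then fail:
\begin{itemize}
    \item Your claim (b) asserts $\bar{\delta}_{S,C}(|P'y_{j+1}|;|P_i^{(j)}y_{j+1}|)<\delta$ ``once $|P'y_{j+1}|$ is chosen sufficiently small''. But $|P'y_{j+1}|=|P_1^{(j)}\gamma(t_0)|=t_0\approx |P_1^{(j)}Q_1^{(j)}|$ is \emph{large}, not small; tuning $s$ only moves $q'$, not $p'$. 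The ratio condition in Definition~\ref{def:k_strainer}\,(2) is therefore violated.
    \item More fundamentally, for the trailing block to be a $(j{+}1,\delta')$-strainer you need $|\tilde\angle P_i^{(j)}y_{j+1}P'-\pi/2|<\delta'$ for $i$ in the trailing block. With $P'=P_1^{(j)}$ this is $|\tilde\angle P_i^{(j)}y_{j+1}P_1^{(j)}-\pi/2|$, and at the nearby point $y_j$ this angle is only known to be $<\delta$ from the original $(k,\delta)$-strainer. Sliding $y_j$ along the $P_1^{(j)}$--$Q_1^{(j)}$ geodesic has no special relation to $P_i^{(j)}$, so continuity only preserves the $\delta$-bound; it cannot manufacture a $\delta'$-bound. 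Your step (c) in fact only claims $<\delta$, which is not enough for the inductive hypothesis.
\end{itemize}

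The paper's single step is different in exactly this respect. It uses the $\varepsilon_k$-openness of the strainer map $f=(d_{p_1},\ldots,d_{p_k})$ (Proposition~\ref{prop:strainer_map_open}) to pick a point $y\in B(x,r_1)$ with $|p_iy|=|p_ix|$ for all $i=2,\ldots,k$ but $|p_1y|\neq|p_1x|$; then $z:=\xi(t)$ is taken on a geodesic $\xi$ from $x$ to $y$, and the \emph{new} strainer point is $p'_1:=y$ (with opposite $q'_1:=x$). Because $|p_iy|=|p_ix|$, the law-of-cosines computation (exactly as in the proof of Proposition~\ref{prop:bi_Lip_k_strainer}) forces $|\tilde\angle p_iz y-\pi/2|$ and $|\tilde\angle p_iz x-\pi/2|$ to be controlled purely by $r_1/|p_ix|$ and $t/(l-t)$, both of which can be made $<\delta'$. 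Thus the improvement to $\delta'$ comes from the \emph{choice of $y$}, not from inheritance. Your scheme lacks this mechanism: to repair it you must replace the ``move along the $P_1$--$Q_1$ geodesic'' step by an appeal to openness of the strainer map to locate such a $y$.
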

\begin{proof}
    Let $(p_1,\ldots,p_k)$ be a $(k,\delta)$-strainer at $x$ with $(q_1,\ldots,q_k)$ as the opposite strainer.
    Let $f:=(d_{p_1},\cdots,d_{p_k})$ be the associated strainer map, and $0<\delta'<\delta$ be arbitrary.
    The proof for the case $k=1$ is trivial, since we can pick a point $y$ in some geodesic joining $x$ and $p_1$ sufficiently close to $x$ such that $y$ is a $(1,\delta')$-strained point with $(p_1,x)$ as its $(1,\delta')$-strainer pair.
    For $k\geq 2$, we will dequeue and enqueue the strainer points $p_i$ from $i=1$ to $k$ inductively to construct a $(k,\delta')$-strainer.

    \begin{enumerate}[label*=\textbf{\textsc{Step \arabic*:}}, fullwidth]
        \item We first deal with the point $p_1$.
        By Corollary \ref{cor:k_strainer_opennes}, we can take $r_1>0$ small enough such that $f$ is an $\bar{\varepsilon}_k$-open $(k,\delta)$-strainer map on $U_1:=B(x,r_1)$ with a common opposite strainer $(q_1,\ldots,q_k)$ on $U_1$, and 
        \begin{equation}\label{eq:self_improvement_strainer_0}
            \bar{\delta}_{S,C}(4r_1;|p_ix|)<0.01\delta'/3, \quad \text{for } i=2,\ldots,k.
        \end{equation}
        Since $f$ is open on $U_1$, we can find $v\in f(U_1)$ such that $v_i=|p_ix|$ for $i=2,\cdots,k$, and $y\in U_1\setminus \{x\}$ such that $f(y)=v$.
        Let $\xi$ be a geodesic from $x$ to $y$.
        Take $z_1:=\xi(t)$, where $t\in (0,l(\xi))$ is to be determined later.
        Applying the Euclidean law of cosines to $\tilde{\Delta}p_iz_1y$ and following the same argument as the inequality \eqref{eq:bi_Lip_k_strainer}, it follows that
        \begin{equation}
            |\cos\tilde{\angle}p_iz_1y|\leq 0.1\frac{\delta'}{3} + 1.1\,\frac{t}{l-t},
        \end{equation}
        for $i=2,\cdots,k$.
        Using the same argument as in Step 1--2 of Proposition \ref{prop:bi_Lip_k_strainer}, we can find sufficiently small $t>0$, so that $y$ is a $(1,\delta')$-strainer at $z_1\in U_1$ with $x$ as an opposite strainer point, and
        \begin{equation}\label{eq:self_improvement_strainer_1}
            \left|\tilde{\angle}p_iz_1y - \pi/2 \right|<\frac{2}{3}\delta',\quad
            \left|\tilde{\angle}p_iz_1x - \pi/2\right|<\delta',\quad i=2,\ldots,k.
        \end{equation}
        Note that $(p_2,\ldots,p_k)$ is a $(k-1,\delta)$-strainer on $U_1$.
        Furthermore, it follows from \eqref{eq:self_improvement_strainer_0} that
        \begin{equation}
            \bar{\delta}_{S,C}(|z_1y|;|p_kz_1|)
            \leq
            \bar{\delta}_{S,C}(2r_1;|p_kx|-r_1)
            \leq
            \bar{\delta}_{S,C}(4r_1;|p_kx|)
            < 0.01\delta'/3<\delta'.
        \end{equation}
        Denote $p'_1:=y$ and $q'_1:=x$.
        Together with \eqref{eq:self_improvement_strainer_1}, it follows from Definition \ref{def:k_strainer} that the $k$-tuple $(p_2,\ldots,p_k, p'_1)$ is a $(k,\delta)$-strainer at $z_1$ with an opposite strainer $(q_2,\ldots,q_k,q'_1)$.
        It can also be seen that $|z_1x|<r_1$.

        \item Assume that for $2\leq j<k$, we have a $(k,\delta)$-strainer $(p_j,\ldots,p_k,p'_1,\ldots,p'_{j-1})$ at $z_{j-1}$ with $(q_j,\ldots,q_k,q'_1,\ldots,q'_{j-1})$ as an opposite strainer, such that $(p'_1,\ldots, p'_{j-1})$ is a $(j-1,\delta')$-strainer at $z_{j-1}$ with $(q'_1,\ldots,q'_{j-1})$ as an opposite $(j-1,\delta')$-strainer and that $|xz_{j-1}|<\sum_{i=1}^{j-1}r_1/2^{i-1}$.
        Let $f$ be the associated $(k,\delta)$-strainer map.
        Let $0<r_j< r_1/2^{j-1}$ be a small constant such that $f$ is an $\bar{\varepsilon}_k$-open $(k,\delta)$-strainer map on $U_j:=B(z_{j-1},r_j)$ with a common opposite strainer $(q_j,\ldots,q_k,q'_1,\ldots,q'_{j-1})$ on $U_j$, and
        \begin{equation}
            \bar{\delta}_{S,C}(4r_j;|p'_{j-1}z_{j-1}|)<\frac{0.01}{3}\delta'.
        \end{equation}
        By the openness of $f$, we can find $p'_{j}\in U_j$ such that $|p_iz_{j-1}|=|p_ip'_{j}|$ and $|p'_lz_{j-1}|=|p'_lp'_j|$ for all $i=j+1,\ldots,k$ and $l=1,\ldots,j-1$.
        Let $\xi$ be a geodesic from $z_{j-1}$ to $p'_j$, and denote $z_j:=\xi(t)$ for $t\in (0,l(\xi))$ to be determined in the following.
        Now by taking $t>0$ sufficiently small and following the same argument as Step 1, one can readily check that the $k$-tuple $(p_{j+1},\ldots,p_k,p'_1,\cdots,p'_{j-1},p'_j)$ is a $(k,\delta)$-strainer at $z_j\in U_j$ such that $(p'_1,\ldots,p'_{j-1},p'_j)$ is a $(j,\delta')$-strainer at $z_j$.
        Moreover, by our choice of $r_j$, it follows that $|z_{j-1}z_j|<r_j\leq r_1/2^{j-1}$ and therefore $|xz_j|< \sum_{i=1}^{j}r_1/2^{i-1}$.
        Denote $q'_j:=z_{j-1}$.
        Then we obtain a new $(k,\delta)$-strainer $(p_{j+1},\ldots,p_k,p'_1,\ldots,p'_j)$ at $z_j$ with $(q_{j+1},\ldots,q_k,q'_1,\ldots,q'_j)$ as an opposite strainer such that $(p'_1,\ldots,p'_j)$ is a $(j,\delta')$-strainer at $z_j$.

        \item By induction, we obtain a new $(k,\delta')$-strainer $(p'_1,\ldots,p'_k)$ at some point $z$ with $|zx|<2r_1$.
        Since $r_1>0$ can be arbitrarily small, the statement follows.
    \end{enumerate}
\end{proof}

\section{Dimension}\label{sect:dimension}
\noindent
In this section, we investigate the dimension of $S$-concave, locally semi-convex Busemann concave space.
We prove that the Hausdorff dimension of such a space $X$, coincides with a constant, called \emph{strainer number}. 
Furthermore, if $X$ has either finite Hausdorff dimension or finite strainer number, then $X$ is proper, its topological dimension agrees with its Hausdorff dimension, and $X$ carries a non-trivial $n$-dimensional Hausdorff measure for some $n \in \mathbb{N}$. 
In particular, the metric measure space $(X, d, \mathcal{H}^n)$ satisfies a synthetic curvature-dimension condition, called measure contraction property $\mathrm{MCP}(0, n)$.

We first recall the following definition of strainer number (see \citep[Section 6]{burago1992ad} and \cite[Definition 10.8.11]{burago2001course}).
\begin{definition}[Strainer number]\label{def:strainer_number}
    Let $(X,d)$ be an $S$-concave, locally semi-convex Busemann concave space.
    A natural number $m \in \mathbb{N}$ is called the \emph{strainer number} at a point $x \in X$ if, for any $\delta >0$ and any neighborhood $U_x$ of $x$, there exists a $(m,\delta)$-strained point $y \in U_x$, but the analogous property fails for $m+1$.
    If no such number exists, the strainer number at $x$ is defined to be $\infty$.
    The strainer number of $X$ is the supremum of all $m$ such that there exists a point $x \in X$ with strainer number $m$.
\end{definition}

It can be seen that the strainer number at a point is well-defined, i.e., it is either a unique natural number or infinity.
Moreover, the strainer number at any point is at least 1.
\begin{lemma}\label{lemma:existence_1_strainer}
    Let $X$ be an $S$-concave, $(C,D)$-semi-convex Busemann concave space.
    Then for any $\delta>0$, the set of $(1,\delta)$-strained points is dense in $X$.
    In particular, the strainer number at any point is at least 1.
\end{lemma}
\begin{proof}
    Let $x\in X$ be any point, and $p\in X$ be such that $|px|<D$.
    Let $\xi$ be a unit-speed geodesic from $x$ to $p$.
    Take $t>0$ small enough such that $\bar{\delta}_S(t;|px|-t)<\delta$.
    Let $z:=\xi(t)$.
    Then we have $\tilde{\angle}pzx=\pi>\pi-\delta$ and $\bar{\delta}_S(|zx|;|pz|)<\delta$.
    This implies that $p$ is a $(1,\delta)$-strainer at $z=\xi(t)$ with $x$ as an opposite strainer, and therefore $z$ is a $(1,\delta)$-strained point.
    Since $t$ can be taken arbitrarily small, the assertion follows.
\end{proof}

We now show that the strainer number of an $S$-concave, locally semi-convex Busemann concave space is exactly the Hausdorff dimension of the space. 
Our proof is based on the constancy of the Hausdorff dimensions of Busemann concave spaces established by Kell \citep[Lemma 2.22]{kell2019sectional}, which states that all bounded open subsets of a Busemann concave space have the same Hausdorff dimension.
\begin{lemma}\label{lemma:strainer_num_equal_H_dim}
    Let $X$ be an $S$-concave, locally semi-convex Busemann concave space.
    Then the strainer number at any point of $X$ is equal to the Hausdorff dimension of the space.
    If this strainer number is finite, then it coincides with the topological dimension of the space.
\end{lemma}
\begin{proof}
    Let $U$ be any bounded open neighborhood of a point $x\in X$.
    We show that the Hausdorff dimension of $U$ is equal to the strainer number $m$ at $x$.

    In the case that $m=\infty$: for any $k \in \mathbb{N}$ and any $0<\delta < \delta_k$, by Definition \ref{def:strainer_number} and Corollary \ref{cor:k_strainer_opennes}, there exists a $(k, \delta)$-strained point $y\in U$, such that the associated $(k,\delta)$-strainer map $f:V\to \mathbb{R}^k$ defined on a neighborhood $V\subset U$ of $y$, is $\sqrt{k}$-Lipschitz and $\bar{\varepsilon}_k(\delta)$-open.
    Thus,
    \begin{equation}
        k=\mathrm{dim}_H(f(V))\leq \mathrm{dim}_H(V)\leq \mathrm{dim}_H(U).
    \end{equation}
    Since $k\in \mathbb{N}$ is arbitrary, it follows that $\mathrm{dim}_H(X)=\mathrm{dim}_H(U)=\infty$.

    In the case that the strainer number $m$ at $x$ is finite: by definition, we can find a small neighborhood $U_x\subset U$ of $x$ which does not contain any $(m+1,\delta)$-strained point for some $\delta<\delta_{m+1}/2$.
    By the self-improvement property of strainers (Lemma \ref{lemma:self_improvement_strainer}), it follows that $U_x$ does not contain any $(m+1,2\delta)$-strained point either.
    Now let $y\in U_x$ be an $(m,\delta)$-strained point and $f$ be the associated strainer map.
    Since there is no $(m+1,2\delta)$-strained point near $y$, by Proposition \ref{prop:bi_Lip_k_strainer} we can find an open neighborhood $V\subset U_x$ of $y$ such that $f: V\to f(V)\subset \mathbb{R}^m$ is a bi-Lipschitz homeomorphism.
    Since all bounded open subsets have the same Hausdorff dimension on Busemann concave spaces (see \citep[Lemma 2.22]{kell2019sectional}), and $X$ can be covered by countably many bounded open neighborhoods of $x$, it follows that
    \begin{equation}
        m=\mathrm{dim}_H(f(V))=\mathrm{dim}_H(V)=\mathrm{dim}_H(U_x)=\mathrm{dim}_H(U)=\mathrm{dim}_H(X),
    \end{equation}
    which is the thesis.

    For the second assertion, assume the strainer number $m$ at $x$ is finite.
    On one hand, the topological dimension of $U$ is not greater than the Hausdorff dimension of $U$.
    On the other hand, from the proof of the first assertion, we can find a neighborhood $V\subset U$ which is bi-Lipschitz homeomorphic to some open subset of $\mathbb{R}^m$.
    These imply that
    \begin{equation}
        m=\mathrm{dim}_T(V)\leq \mathrm{dim}_T(U)\leq \mathrm{dim}_H(U)=m.
    \end{equation}
    This shows that the topological dimension of $U$ is equal to $m$.
    By the constancy of the Hausdorff dimension again, our assertion follows.
\end{proof}

We now prove our main result of this section.

\begin{proposition}[Measure contraction property]\label{prop:Buseman_concave_MCP}
    Let $X$ be an $S$-concave, locally semi-convex, Busemann concave space.
    Then $X$ has finite Hausdorff dimension if and only if it has finite strainer number.
    In either case, both values coincide with the topological dimension of $X$, and $X$ admits a non-trivial $n$-Hausdorff measure.
    Moreover, the metric measure space $(X, d, \mathcal{H}^n)$ satisfies the measure contraction property $\mathrm{MCP}(0, n)$ and the Bishop--Gromov volume comparison inequality $\mathrm{BG}(0, n)$.
    In particular, $(X,d)$ is doubling and proper\footnote{A metric space is called \emph{proper} if all closed and bounded subsets are compact.}.
\end{proposition}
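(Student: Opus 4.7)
The plan is to assemble the structural results already established in the paper together with a theorem of Kell \cite{kell2019sectional}. The first two assertions—that $X$ has finite Hausdorff dimension if and only if its strainer number is finite, and that in either case both invariants coincide with the topological dimension—are immediate consequences of Lemma \ref{lemma:strainer_num_equal_H_dim}. Denote the common finite value by $n$.

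To produce a non-trivial $\mathcal{H}^n$-measure I would argue as follows. Since the strainer number of $X$ equals $n$, Definition \ref{def:strainer_number} yields, in every neighborhood of any point, an $(n,\delta)$-strained point for every $\delta<\delta_n$; combining this with the self-improvement Lemma \ref{lemma:self_improvement_strainer} and Remark \ref{rmk:self_improvement_strainer}, I can choose a point $y$ together with a small open neighborhood $U$ of $y$ that carries an $(n,\delta)$-strainer but contains no $(n+1,2\delta)$-strained point. Proposition \ref{prop:bi_Lip_k_strainer} then produces an open neighborhood $V\subset U$ on which the associated strainer map $f:V\to\mathbb{R}^n$ is a bi-Lipschitz homeomorphism onto an open subset of $\mathbb{R}^n$. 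Hence $\mathcal{H}^n(V)\in(0,\infty)$, so $\mathcal{H}^n$ is non-trivial on $X$.

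The central step is verifying $\mathrm{MCP}(0,n)$ for $(X,d,\mathcal{H}^n)$. Here I would invoke the main result of Kell \cite{kell2019sectional}, which establishes $\mathrm{MCP}(0,n)$ for any non-branching Busemann concave space carrying a non-trivial $n$-Hausdorff measure. Non-branching is automatic in our setting (a direct consequence of Busemann concavity, as noted in Section \ref{sect:S_concave_local_squared_convex_Busemann_concave}), and the non-triviality of $\mathcal{H}^n$ is the content of the previous paragraph. The geometric mechanism is that Busemann concavity \eqref{eq:Busemann_monotone_1}, applied to the family of geodesics from a fixed base point $x$ to points $a$ in a Borel set $A$, forces the contraction maps $\Phi_s(a):=\gamma_{xa}(s)$ to satisfy $|\Phi_s(a)\Phi_s(b)|\geq s|ab|$; non-branching makes $\Phi_s$ injective, hence $\Phi_s^{-1}$ is $s^{-1}$-Lipschitz, so $\mathcal{H}^n(\Phi_s(A))\geq s^n\mathcal{H}^n(A)$, which is precisely $\mathrm{MCP}(0,n)$. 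The Bishop--Gromov inequality $\mathrm{BG}(0,n)$ follows from $\mathrm{MCP}(0,n)$ by the standard Sturm--Ohta derivation, and $\mathrm{BG}(0,n)$ immediately yields the doubling property; completeness together with doubling gives properness.

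The principal subtlety lies in the $\mathrm{MCP}$ step, namely in constructing a measurable selection of geodesics $a\mapsto\gamma_{xa}$. Non-branching reduces this to a standard measurable selection argument, as already carried out in \cite{kell2019sectional}; once this technicality is granted, the geometric content is exactly the contraction estimate displayed above, so the proposition reduces to showing that our intrinsic assumptions (strainer maps of bi-Lipschitz type, Busemann concavity) produce precisely the hypotheses Kell requires.
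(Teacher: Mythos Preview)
Your proposal is correct and follows essentially the same route as the paper: both reduce the first two assertions to Lemma \ref{lemma:strainer_num_equal_H_dim}, obtain the non-triviality of $\mathcal{H}^n$ from the bi-Lipschitz strainer map of Proposition \ref{prop:bi_Lip_k_strainer}, and then invoke Kell's result \cite[Proposition 2.23]{kell2019sectional} for $\mathrm{MCP}(0,n)$ and $\mathrm{BG}(0,n)$, with doubling and properness as standard consequences. Your extra discussion of the geometric mechanism behind Kell's argument and the measurable-selection subtlety is a helpful elaboration but not a departure from the paper's approach.
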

\begin{proof}
    The first assertion follows from Lemma \ref{lemma:strainer_num_equal_H_dim}.
    From the proof of Lemma \ref{lemma:strainer_num_equal_H_dim}, we can find an open subset $U\subset X$ that is bi-Lipschitz homeomorphic to an open subset of $\mathbb{R}^n$.
    This implies that $\mathcal{H}^n(U)>0$, and therefore $X$ admits a non-trivial $n$-Hausdorff measure $\mathcal{H}^n$.

    For the second assertion, both the measure contraction property and the Bishop--Gromov volume comparison inequality follow directly from the first assertion and \cite[Proposition 2.23]{kell2019sectional}.
    In particular, the Bishop--Gromov inequality implies that $\mathcal{H}^n$ is a doubling measure on $X$, which in turn implies that $(X,d)$ is proper (see \cite[Proposition 3.1]{bjorn2011nonlinear}) and doubling (see for example \cite[page 102]{shanmugalingam2015sobolev}).
\end{proof}

In light of the equality between the strainer number and the Hausdorff dimension, we say that an $S$-concave, locally semi-convex Busemann concave space is \emph{finite dimensional} if it has either finite Hausdorff dimension or finite strainer number.

We conclude this section with a topological result.
\begin{corollary}[Topological manifold part]\label{cor:strained_points_open_dense_top_manifold}
    Let $X$ be an $n$-dimensional $S$-concave, locally semi-convex Busemann concave space.
    Then for any $\delta>0$, the set $\mathcal{A}(n,\delta)$ of $(n,\delta)$-strained points in $X$ is open and dense in $X$.
    Moreover, $\mathcal{A}(n,\delta)$ is a topological $n$-manifold if $0<\delta<\delta_{n+1}/2$.
\end{corollary}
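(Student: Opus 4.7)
The proof has three components to verify: openness, density, and (under the extra hypothesis) the topological manifold structure. My plan is to combine the openness of strained sets (Lemma \ref{lemma:strained_point_openness}), the constancy of the strainer number at every point established in Lemma \ref{lemma:strainer_num_equal_H_dim} and Proposition \ref{prop:Buseman_concave_MCP}, the bi-Lipschitz criterion of Proposition \ref{prop:bi_Lip_k_strainer}, and the self-improvement property (Lemma \ref{lemma:self_improvement_strainer}).

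For openness, I would invoke Lemma \ref{lemma:strained_point_openness} directly: any $(n,\delta)$-strainer $(p_1,\dots,p_n)$ at $x$ with opposite strainer $(q_1,\dots,q_n)$ remains an $(n,\delta)$-strainer on a whole open neighborhood of $x$, so $\mathcal{A}(n,\delta)$ is open. For density, I would observe that, since $X$ is $n$-dimensional, Lemma \ref{lemma:strainer_num_equal_H_dim} forces the strainer number at every point of $X$ to equal $n$. By Definition \ref{def:strainer_number}, for every $x \in X$, every sufficiently small neighborhood $U$ of $x$, and every sufficiently small $\delta_0 < \delta_n$, there is an $(n,\delta_0)$-strained point in $U$. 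If $\delta \ge \delta_n$, I would choose $\delta_0 < \min\{\delta,\delta_n\}$ and use that every $(n,\delta_0)$-strainer is automatically an $(n,\delta)$-strainer, since the angle and distance conditions in Definitions \ref{def:1-strainer}--\ref{def:k_strainer} only relax as the parameter grows. This yields density of $\mathcal{A}(n,\delta)$ in $X$.

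For the topological manifold assertion with $0<\delta<\delta_{n+1}/2$, I take any $x \in \mathcal{A}(n,\delta)$ and aim to apply Proposition \ref{prop:bi_Lip_k_strainer} to the $(n,\delta)$-strainer map at $x$; note that $\delta < \delta_{n+1}/2 < \delta_n$, so the hypothesis $\delta < \delta_n$ of that proposition is automatic. What remains is to verify that no point in some neighborhood of $x$ admits an $(n+1,2\delta)$-strainer. Since the strainer number at $x$ equals $n$, Definition \ref{def:strainer_number} supplies a neighborhood $U_x$ of $x$ and a parameter $\delta_0 < \delta_{n+1}$ such that $U_x$ contains no $(n+1,\delta_0)$-strained point. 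If $2\delta \le \delta_0$, the monotonicity of the strainer conditions in the parameter gives the claim immediately. Otherwise, I argue by contradiction: an $(n+1,2\delta)$-strained point $y \in U_x$, together with the self-improvement property (Lemma \ref{lemma:self_improvement_strainer}), which is applicable because $2\delta < \delta_{n+1}$, would produce an $(n+1,\delta_0)$-strained point in every neighborhood of $y$, contradicting the defining property of $U_x$. Proposition \ref{prop:bi_Lip_k_strainer} then furnishes a neighborhood of $x$ bi-Lipschitz homeomorphic to an open subset of $\mathbb{R}^n$, and combined with the openness of $\mathcal{A}(n,\delta)$, this exhibits $\mathcal{A}(n,\delta)$ as a topological $n$-manifold.

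The main obstacle is really the threshold $\delta_{n+1}/2$ in the manifold part, not openness or density. It is precisely this bound that bridges the gap between what Definition \ref{def:strainer_number} provides, namely the absence of $(n+1,\delta_0)$-strained points for some unspecified $\delta_0<\delta_{n+1}$, and what Proposition \ref{prop:bi_Lip_k_strainer} demands, namely the absence of $(n+1,2\delta)$-strained points. Without the room $2\delta<\delta_{n+1}$, the self-improvement argument collapses because Lemma \ref{lemma:self_improvement_strainer} itself requires the strainer parameter to lie below $\delta_{n+1}$.
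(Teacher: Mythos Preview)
Your proposal is correct and follows essentially the same approach as the paper's proof: openness via Lemma~\ref{lemma:strained_point_openness}, density via the constancy of the strainer number (Lemma~\ref{lemma:strainer_num_equal_H_dim}), and the manifold structure via Proposition~\ref{prop:bi_Lip_k_strainer} after ruling out $(n+1,2\delta)$-strained points through self-improvement (Lemma~\ref{lemma:self_improvement_strainer}, cf.\ Remark~\ref{rmk:self_improvement_strainer}). If anything, your treatment of the case split $2\delta \le \delta_0$ versus $2\delta > \delta_0$ is slightly more explicit than the paper's, which simply refers back to ``the same argument as Lemma~\ref{lemma:strainer_num_equal_H_dim}''.
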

\begin{proof}
    By Lemma \ref{lemma:strained_point_openness}, $\mathcal{A}(n,\delta)$ is open. 
    By Lemma \ref{lemma:strainer_num_equal_H_dim}, $\mathcal{A}(n,\delta)$ is dense in $X$.
    
    For any $0<\delta<\delta_{n+1}/2$ and $x\in \mathcal{A}(n,\delta)$, it follows from the proof of Lemma \ref{lemma:strainer_num_equal_H_dim} that, there exists a neighborhood $U$ of $x$ which does not contain any $(n+1,2\delta)$-strained point.
    By applying Proposition \ref{prop:bi_Lip_k_strainer}, we can find a neighborhood $V \subset \mathcal{A}(n,\delta)$ of $x$ which is bi-Lipschitz homeomorphic to some open subset of $\mathbb{R}^n$.
    This shows that $\mathcal{A}(n,\delta)$ is a topological $n$-manifold.
\end{proof}

\section{Hausdorff measure and Hausdorff dimension of singular sets}\label{sect:Hausdorff_measure_strata_main}
\noindent
In this section, we further investigate the structure of $n$-dimensional $S$-concave, locally semi-convex Busemann concave spaces.

\subsection{Hausdorff measure of singular sets}\label{sect:Hausdorff_measure_strata}

We recall the following notion of cut points in geodesic spaces.
See \cite{sormani2009conjugate} for more discussion.
\begin{definition}
    Let $X$ be a geodesic space, and $x\in X$.
    Let $I_x$ be the set of all points $y$ that are connected to $x$ by at least one extendable geodesic, i.e.,
    \begin{equation}
        I_x:=\left\{\gamma(t): t\in [0,1),\quad \text{$\gamma$ any constant-speed geodesic starting at $x$}\right\}.
    \end{equation} 
    Let $T_x:=X\setminus I_x$ denote the set of cut points of $x$, i.e., those points that do not lie in the interior of any geodesic starting at $x$.
\end{definition}

The next lemma shows that the cut locus has null top-dimensional Hausdorff measure.
A similar result for Alexandrov spaces with curvature bounded below was proved by Otsu and Shioya \cite[Proposition 3.1]{otsu1994riemannian}.
\begin{lemma}\label{lemma:null_measure_cut_points}
    Let $X$ be an $n$-dimensional $S$-concave, locally semi-convex Busemann concave space.
    Then $\mathcal{H}^n(T_x)=0$ for all $x\in X$.
    In particular, $\mathcal{H}^n(X\setminus \mathcal{A}(1,\delta))=0$ for any $\delta>0$
\end{lemma}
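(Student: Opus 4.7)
The plan is to prove the two statements in order: first establish $\mathcal{H}^n(T_x)=0$, then deduce the ``in particular'' clause via the set-theoretic inclusion $I_x\setminus\{x\}\subset\mathcal{A}(1,\delta)$, which together with $\mathcal{H}^n(\{x\})=0$ will give $X\setminus\mathcal{A}(1,\delta)\subset T_x\cup\{x\}$ and hence the desired measure-zero statement.

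The main (and only real) obstacle is the first statement. Here I would invoke Proposition~\ref{prop:Buseman_concave_MCP}, which furnishes the measure contraction property $\mathrm{MCP}(0,n)$ for $(X,d,\mathcal{H}^n)$, together with the non-branching property that Busemann concave spaces enjoy by definition. In any non-branching metric measure space satisfying a measure contraction condition, the cut locus of any fixed base point has null top-dimensional Hausdorff measure; this is the almost-extendability result of von Renesse \cite{von2008local}, already highlighted in the introduction. The mechanism is that $\mathrm{MCP}(0,n)$ supplies, for $\mathcal{H}^n$-a.e.\ endpoint $y$, a measurable selection of a constant-speed geodesic from $x$ to $y$ together with a quantitative measure-contraction estimate along this selection; dilating the selection past the endpoint and arguing by Fubini together with non-branching forces the set of $y$ at which no such extension exists to be $\mathcal{H}^n$-null, yielding $\mathcal{H}^n(T_x)=0$. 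Rather than reproducing this argument, I would invoke \cite{von2008local} directly.

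Granting this, the second statement reduces to showing $I_x\setminus\{x\}\subset\mathcal{A}(1,\delta)$ for each $\delta\in(0,1/2)$. Fix $y\in I_x$ with $y\neq x$ and choose an arclength geodesic $\eta:[0,L]\to X$ with $\eta(0)=x$ and $\eta(s_0)=y$ for some $s_0\in(0,L)$, which exists by the definition of $I_x$. For small $\epsilon_1\in(0,\min\{s_0,D\})$ and much smaller $\epsilon_2\in(0,L-s_0)$, set $p:=\eta(s_0-\epsilon_1)$ and $q:=\eta(s_0+\epsilon_2)$. Since $p,y,q$ lie on a single geodesic with $y$ strictly between $p$ and $q$, the Euclidean comparison triangle $\tilde\Delta pyq$ degenerates, so $\tilde\angle pyq=\pi>\pi-\delta$. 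Moreover,
\begin{equation*}
\bar\delta_S(|qy|;|py|)=\arccos\!\left(1-\frac{S\,\epsilon_2}{2\epsilon_1}\right),
\end{equation*}
which is smaller than $\delta$ as soon as $\epsilon_2/\epsilon_1<2(1-\cos\delta)/S$. The condition $|py|=\epsilon_1<D$ required in Definition~\ref{def:1-strainer} holds by construction, so $p$ is a $(1,\delta)$-strainer at $y$ with opposite strainer $q$, and $y\in\mathcal{A}(1,\delta)$ as claimed.
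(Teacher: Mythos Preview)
Your proof is correct and follows essentially the same approach as the paper: invoke $\mathrm{MCP}(0,n)$ via Proposition~\ref{prop:Buseman_concave_MCP} and cite \cite{von2008local} for the null cut locus, then build a $(1,\delta)$-strainer from the extendability of a geodesic through the point. The only tactical difference is that for the second assertion the paper fixes a base point $p$, uses $p$ itself as the strainer at each $x\in I_p\cap B(p,D)$, and then covers $X$ by countably many balls $B(p_i,D)$; your choice of the strainer $p=\eta(s_0-\epsilon_1)$ \emph{near} $y$ makes the constraint $|py|<D$ automatic and thereby avoids the covering step.
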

\begin{proof}
    The first assertion follows from Proposition \ref{prop:Buseman_concave_MCP} and a well-known fact (see \cite[Section 3.3]{von2008local}) that in a metric measure space satisfying the measure contraction property, the set of cut points is negligible.
    Using the similar argument as Lemma \ref{lemma:existence_1_strainer} and the almost extendable property of geodesics, we can prove the second assertion.
\end{proof}

We now present a key technical lemma, which states that for any $1 \leq k \leq n-1$, the set of points admitting a $(k,\delta)$-strainer but not a $(k+1,4\delta)$-strainer has zero $n$-Hausdorff measure.
A similar result for Alexandrov spaces was proved in \cite[Lemma 10.5 and Theorem 10.6]{burago1992ad}, based on the metric cone structure of tangent cones. 
Here we adopt a more straightforward approach, where we utilize the `almost extendable' property of geodesics, established in Lemma \ref{lemma:null_measure_cut_points}, to analyze the infinitesimal behavior $\liminf_{y \to x} |f(y)f(x)| / |xy|$ of strainer maps restricted to singular sets. 
Then a lemma of Lytchak \cite[Lemma 3.1]{lytchak2006open} enables us to determine the top-dimensional Hausdorff measure of singular sets.

\begin{lemma}\label{lemma:hausdorff_measure_strained_set}
    Let $X$ be an $n$-dimensional $S$-concave, $(C,D)$-semi-convex Busemann concave space.
    Given $1\leq k \leq n-1$, let $(p_1, \ldots, p_k)$ be a $(k, \delta)$-strainer on an open set $U$ for some $\delta < \delta_k$.
    Let $E \subset U$ be the set of points that do not admit any $(k+1, 4\delta)$-strainer.
    Then $\mathcal{H}^n(E) = 0$.
\end{lemma}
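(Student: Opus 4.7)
My plan is to show that, after removing an $\mathcal{H}^n$-null subset of $E$, the $(k,\delta)$-strainer map $f := (d_{p_1},\ldots,d_{p_k}) : U \to \mathbb{R}^k$ has strictly positive lower metric derivative at every remaining point. Since $f$ is Lipschitz by Corollary~\ref{cor:k_strainer_opennes} and $k < n$, a standard Borel decomposition using the separability of $X$ then covers the surviving part of $E$, up to a further null set, by countably many pieces on each of which $f$ is bi-Lipschitz into $\mathbb{R}^k$. Every such piece has $\mathrm{dim}_H \leq k < n$ and therefore vanishing $\mathcal{H}^n$-measure, yielding $\mathcal{H}^n(E) = 0$.

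The null set I discard at the outset consists of those $x \in E$ where a geodesic-reversal property fails almost everywhere: by Lemma~\ref{lemma:null_measure_cut_points} combined with Fubini applied to $\{(x,y) \in U \times U : x \in I_y\}$, for $\mathcal{H}^n$-a.e.\ $x \in E$ the condition $x \in I_y$ holds for $\mathcal{H}^n$-a.e.\ $y \in U$. Fix such an $x$ and assume for contradiction that $m(x) := \liminf_{y \to x,\, y \in U\setminus\{x\}} |f(x)f(y)|/|xy| = 0$. Pick a sequence $y_n \to x$ in $U$ with $\varepsilon_n := |f(x)f(y_n)|/|xy_n| \to 0$; by density of the full-measure set $\{y : x \in I_y\}$ together with a small perturbation preserving $\varepsilon_n \to 0$, I arrange that $x \in I_{y_n}$ for every $n$, so each geodesic from $y_n$ through $x$ admits an extension past $x$.

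Direct expansion via the Euclidean law of cosines gives
\[
\cos\tilde{\angle}p_i x y_n
= \frac{|p_ix| - |p_iy_n|}{|xy_n|}\cdot\frac{|p_ix| + |p_iy_n|}{2|p_ix|} + \frac{|xy_n|}{2|p_ix|}
\xrightarrow{n\to\infty} 0,
\]
since $\big||p_ix| - |p_iy_n|\big| \leq \varepsilon_n|xy_n|$ by the Lipschitz estimate on $f$, and $|xy_n| \to 0$. Let $\xi_n$ be a unit-speed geodesic from $x$ to $y_n$, and let $\eta_n$ be the reverse unit-speed parametrization of its extension past $x$. The almost comparison inequalities of Lemma~\ref{lemma:angle_well_defined} and Lemma~\ref{lemma:angle_well_defined_2}, applied at scale $|xy_n| \to 0$, yield $\angle p_i x \xi_n \to \pi/2$, and the identity $\angle p_i x \xi_n + \angle p_i x \eta_n = \pi$ obtained by combining Lemma~\ref{lemma:sum_of_angle_1} with Lemma~\ref{lemma:sum_of_angle_2} forces $\angle p_i x \eta_n \to \pi/2$ as well.

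It remains to verify that, for all sufficiently large $n$, the tuple $(p_1,\ldots,p_k,y_n)$ is a $(k+1,4\delta)$-strainer at $x$ with opposite strainer $(q_1,\ldots,q_k,Q_n)$, where $Q_n := \eta_n(c_\delta|xy_n|)$ for a constant $c_\delta \in (0,1)$ depending only on $\delta$, $S$, $C$ chosen so that $\bar{\delta}_S(c_\delta;1) < 4\delta$. The collinearity of $y_n, x, Q_n$ forces $\tilde{\angle}y_n x Q_n = \pi$; the condition $\bar{\delta}_{S,C}(|y_nx|;|p_ix|) \to 0$ holds because $|xy_n|\to 0$; and $\bar{\delta}_S(|Q_nx|;|y_nx|) = \bar{\delta}_S(c_\delta;1) < 4\delta$ by choice of $c_\delta$. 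The required almost orthogonality at $y_n$ follows from $\tilde{\angle}p_i x y_n \to \pi/2$, and at $Q_n$ from $\angle p_i x \eta_n \to \pi/2$ combined with the almost comparison inequalities at the small scale $c_\delta|xy_n|$. This contradicts $x \in E$. The main obstacle throughout is transferring infinitesimal orthogonal data into an actual strainer based at $x$: the asymmetry of strainer maps (Remark~\ref{rmk:asymmetry_k_strainer}) prevents a naive construction, and it is precisely the almost-extendability of geodesics from Lemma~\ref{lemma:null_measure_cut_points} that lets me produce the opposite strainer $Q_n$ at the correct scale.
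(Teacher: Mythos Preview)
Your approach is correct and follows essentially the same strategy as the paper's: both use the almost-extendability of geodesics (Lemma~\ref{lemma:null_measure_cut_points}) to manufacture an extra strainer direction, derive a contradiction with membership in $E$, and then conclude via a bi-Lipschitz decomposition of the set where the strainer map has positive lower metric derivative. The only substantive difference is a duality in the construction: you build the $(k+1,4\delta)$-strainer at the fixed point $x$ using the approximating point $y_n$ (and hence need $x \in I_{y_n}$, obtained via a Fubini argument plus perturbation), whereas the paper builds it at the approximating point $x_j$ using $x$ as the new strainer (and hence uses $x_j \in I_x$ directly, working inside a positive-measure set $K$ of density points of $E$ so that $I_x \cap K$ is dense in $K$). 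The paper's route sidesteps the measurability check needed for Fubini on $\{(x,y): x \in I_y\}$, but yours is equally valid once that is verified.

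One small gap to patch: the condition $x \in I_{y_n}$ only guarantees \emph{some} positive extension length $\ell_n$ of the geodesic past $x$, not necessarily $c_\delta|xy_n|$. Take instead $Q_n := \eta_n\bigl(\min(c_\delta|xy_n|, \ell_n)\bigr)$; this still gives $\bar{\delta}_S(|Q_n x|;|y_n x|) < 4\delta$, and since $|Q_n x| \leq |xy_n| \to 0$ the almost comparison inequalities yield $\tilde{\angle}p_i x Q_n \to \pi/2$ as required.
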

\begin{proof}
    Let $f:=(d_{p_1},\ldots,d_{p_k}):U\to \mathbb{R}^k$ be the associated $(k,\delta)$-strainer map.
    Assume by contradiction that $\mathcal{H}^n(E)>0$.

    \begin{enumerate}[label*=\textbf{\textsc{Step \arabic*:}}, fullwidth]
        \item Let $F\subset E$ be the set of density points\footnote{Given a metric space $(X,d)$, we say a point $x\in X$ is a density point of $E\subset X$ with respect to $\mathcal{H}^n$ if $\lim_{r\to 0} \frac{\mathcal{H}^n(B(x,r)\cap E)}{\mathcal{H}^n(B(x,r))}=1$.} of $E$ with respect to $\mathcal{H}^n$.
        From Proposition \ref{prop:Buseman_concave_MCP}, we know that $\mathcal{H}^n$ is doubling.
        Then by the Lebesgue differentiation theorem (see for example \cite[p. 77]{shanmugalingam2015sobolev}), it follows that $\mathcal{H}^n(E\setminus F)=0$. 
        By the definition of density points, it follows that $\mathcal{H}^n(B(x,r)\cap E )>0$ for any $x\in F$ and any $r>0$.
        By the inner regularity of the $n$-Hausdorff measure\footnote{Recall that Hausdorff measures on complete metric spaces are Radon measures.}, we can find a sequence of increasing compact subsets $K_n\subset F$ such that $\mathcal{H}^n(F\setminus \cup_n K_n)=0$.
        Let $K:=\cup_{n}K_n$.
        It is clear that $\mathcal{H}^n(E\setminus K)=0$. 
        Furthermore, from $\mathcal{H}^n(B(x,r)\cap E )>0$ it follows that $\mathcal{H}^n(B(x,r)\cap K)>0$ for any $x\in K$ and $r>0$.
        
        By Lemma \ref{lemma:null_measure_cut_points}, it holds that $I_x$ has full $\mathcal{H}^n$-measure of $X$ for any $x\in K$. 
        Then $I_x\cap B(y,r)\cap K\neq \emptyset$ for any $x, y\in K$ and $r>0$.
        In particular, $I_x\cap K$ is dense in $K$ for any $x\in K$.

        \item In this step, we show that $\liminf_{K \ni y\to x}\frac{|f(y)f(x)|}{|yx|}\geq \delta$ for any $x\in K$.
        
        Since $I_x\cap K$ is dense in $K$ for any $x\in K$, we just need to show that for any $x\in K$,
        \begin{equation}\label{eq:hausdorff_measure_strained_set_2}
        \lim_{r\to 0}\inf_{\substack{y\in B(x,r)\cap K,\\ y\in I_x}}\frac{|f(y)f(x)|}{|xy|}\geq \delta.
        \end{equation}
        Note that the set $B(x,r)\cap K \cap I_x$ is not empty for any $x\in K$ and $r>0$.
        Therefore, the infimum on the left-hand side of inequality \eqref{eq:hausdorff_measure_strained_set_2} is not equal to infinity\footnote{We take the convention that the infimum of the empty set is $\infty$.} for any $r>0$, and is bounded above by Lipschitz constant of $f$.
        Assume by contradiction, there exists a point $x\in K$ and a sequence $\{x_j\}_{j}\subset K\cap I_x$ such that $x_j\to x$ and
        \begin{equation}\label{eq:lemma_rect_2}
            \lim_{j\to \infty}\frac{\big||p_ix_j|-|p_ix|\big|}{t_j}
            \leq
            \lim_{j\to \infty}\frac{|f(x)f(x_j)|}{|xx_j|}<\delta,
        \end{equation}
        for all $i=1,\ldots,k$.
        Denote $t_j:=|xx_j|$ and choose $j$ large enough such that $t_j+|p_1x_j|<D$, $\bar{\delta}_{S,C}(2t_j;|p_ix|)<0.01\delta$ and $\big||p_ix_j|-|p_ix|\big|/t_j<\delta$ for all $i=1,\ldots,k$.
        Since $x_j\in I_x$, we can find a unit-speed geodesic $\eta_j:[0,t_j + \varepsilon]\to X$ starting from $x$ such that $x_j$ lies in its interior.
        We choose $s_j\in (0,\varepsilon)$ small enough such that $\bar{\delta}_{S,C}(s_j;t_j)<\delta$.
        Let $y:=\eta(t_j+s_j)$.

        We claim that the $(k+1)$-tuple $(p_1,\ldots,p_k, x)$ is a $(k+1,4\delta)$-strainer at $x_j$.
        This contradicts the assumption that $x_j\in K\subset E$ and $E$ does not contain any $(k+1,4\delta)$-strained point.
        Indeed, by assumption, $(p_1,\ldots,p_k)$ is a $(k,\delta)$-strainer at $x_j\in U$.
        By our choice of $y$, it follows that the comparison angle $\tilde{\angle}xx_jy=\pi$ and that $\bar{\delta}_S(|yx_j|;|xx_j|)\leq \bar{\delta}_{S,C}(s_j;t_j)<\delta$, which shows by Definition \ref{def:1-strainer} that $x$ is a $(1,\delta)$-strainer at $x_j$ with the opposite strainer $y$.
        Furthermore, from the choice of $t_j$, it follows that
        \begin{equation}
            \bar{\delta}_{S,C}(|xx_j|;|p_ix_j|)
            \leq
            \bar{\delta}_{S,C}(t_j;|p_ix|-t_j)
            \leq
            \bar{\delta}_{S,C}(2t_j;|p_ix|)
            <\delta.
        \end{equation}
        These shows that the conditions \eqref{cond:k_strainer_1}--\eqref{cond:k_strainer_2} in Definition \ref{def:k_strainer} of $(k+1,\delta)$-strainer are satisfied for $(p_1,\ldots,p_k,x)$ at $x_j$.

        It is left to check the condition \eqref{cond:k_strainer_3} in Definition \ref{def:k_strainer} for $(p_1,\ldots,p_k,x)$ at $x_j$.
        Consider the comparison triangle $\tilde{\Delta}p_ixx_j$.
        By \eqref{eq:angle_well_defined_3} in Lemma \ref{lemma:angle_well_defined}, we obtain that
        \begin{equation}
            \cos\tilde{\angle}p_ix_jx
            =
            \frac{|p_ix_j|-|p_ix|}{t_j} + \delta(t_j;\xi_j),
        \end{equation}
        where $\xi_j$ is an arbitrary geodesic from $x_j$ to $x$, and $\delta$ is an error function in the equality \eqref{eq:angle_well_defined_3} satisfying $0\leq \delta(t_j;\xi_j)\leq t_j/(2|p_ix|)$.
        Then from our choice of $t_j$, it follows that
        \begin{equation}
            \left|\cos\tilde{\angle}p_ix_jx\right|
            \leq
            \left|\frac{|p_ix_j|-|p_ix|}{t_j}\right|
            +
            \frac{t_j}{2|p_ix|}
            <
            \delta + \frac{\delta}{2}
            =
            \frac{3}{2}\delta.
        \end{equation}
        This implies that $|\tilde{\angle}p_ix_jx - \pi/2|<3\delta$ for all $i=1,\ldots,k$.
        Thus, the first inequality in \eqref{eq:k_strainer_1} of Definition \ref{def:k_strainer} is satisfied.
        Finally, following the same argument as Step 2 of the proof of Proposition \ref{prop:bi_Lip_k_strainer}, together with the fact $\eta_j$ is a unit-speed geodesic passing through $x_j$ and our choice of $t_j$ and $y$, we obtain that
        \begin{equation}\label{eq:hausdorff_measure_strained_set_1}
            \left|\tilde{\angle}p_ix_j y - \frac{\pi}{2} \right|<4\delta,
        \end{equation}
        which shows that the second orthogonality inequality of Definition \ref{def:k_strainer} is satisfied.
        To sum up, we have shown that the $(k+1)$-tuple $(p_1,\ldots,p_k,x)$ is a $(k+1,4\delta)$-strainer at $x_j$, which is a contradiction.
        Therefore, the claim follows.
        

        \item We are ready to derive a contradiction to $\mathcal{H}^n(E)>0$ with an idea from \cite[Lemma 3.1]{lytchak2006open}.
        Let $Z_m\subset K$ be the set of points $x\in K$ satisfying that if $z\in K$ with $|zx|<1/m$, then $|f(x)f(z)|\geq \delta|xz|/2$.
        One can check that $Z_m$ is closed in $K$. 
        Since $\lim\inf_{K\ni y\to x}f\geq \delta$ when $f$ is restricted to $K$ as shown in Step 2, so it follows that $K=\cup_{m}Z_m$.
        Note that $f$ is locally bi-Lipschitz when restricted to each $Z_m$.
        This implies that the Hausdorff dimension of $Z_m$ is at most $k$ and so is the Hausdorff dimension of $K$.
        Therefore, $\mathcal{H}^n(K)=0$ since $k\leq n-1$.
        This contradicts the assumption that $\mathcal{H}^n(K)=\mathcal{H}^n(E)>0$.
    \end{enumerate}
\end{proof}

The next theorem concerns the $n$-Hausdorff measure of the set of $(n,\delta)$-strained points.
\begin{theorem}\label{thm:full_measure_n_strained_points}
    Let $X$ be an $n$-dimensional $S$-concave, locally semi-convex Busemann concave space.
    Then for any $\delta>0$, $\mathcal{H}^n(X \setminus \mathcal{A}(n,\delta))=0$.
    In particular, $X$ contains an open dense topological $n$-manifold part which has full $n$-Hausdorff measure.
\end{theorem}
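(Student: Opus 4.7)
The plan is to proceed by induction on $k$ from $1$ to $n$, establishing that for any prescribed target $\delta>0$ and a sufficiently small seed value $\delta_0>0$, the set $X\setminus \mathcal{A}(k, 4^{k-1}\delta_0)$ is $\mathcal{H}^n$-null for every $k=1,\ldots,n$. The exponential factor $4^{k-1}$ is forced by Lemma~\ref{lemma:hausdorff_measure_strained_set}, which upgrades a $(k,\delta)$-strainer only to a $(k+1,4\delta)$-strainer, and is absorbed by choosing $\delta_0 \le \delta/4^{n-1}$ small enough that in addition $4^{k-1}\delta_0<\delta_k$ for each relevant $k$, so that Proposition~\ref{prop:strainer_map_open} and Lemma~\ref{lemma:hausdorff_measure_strained_set} are both applicable at every stage.

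The base case $k=1$ is precisely the second assertion of Lemma~\ref{lemma:null_measure_cut_points}. For the inductive step I assume $\mathcal{H}^n(X\setminus \mathcal{A}(k, 4^{k-1}\delta_0))=0$ for some $1\le k\le n-1$. For each $x\in \mathcal{A}(k, 4^{k-1}\delta_0)$, Lemma~\ref{lemma:strained_point_openness} supplies an open neighborhood $U_x$ on which the $(k, 4^{k-1}\delta_0)$-strainer at $x$ remains a strainer. Since $(X,d)$ is proper by Proposition~\ref{prop:Buseman_concave_MCP}, hence second countable, I extract a countable subcover $\{U_i\}_{i\in \mathbb{N}}$ of $\mathcal{A}(k,4^{k-1}\delta_0)$. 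On each $U_i$, Lemma~\ref{lemma:hausdorff_measure_strained_set} applied with its fixed common strainer shows that the subset $E_i\subset U_i$ of points not admitting any $(k+1, 4^k\delta_0)$-strainer is $\mathcal{H}^n$-null. Therefore
\[
\mathcal{A}(k, 4^{k-1}\delta_0)\setminus \mathcal{A}(k+1, 4^k\delta_0)\subset \bigcup_i E_i
\]
is $\mathcal{H}^n$-null, and together with the inductive hypothesis this closes the induction.

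Setting $k=n$ and using the trivial monotonicity $\mathcal{A}(n,\delta')\subset \mathcal{A}(n,\delta)$ for $\delta'\le \delta$ together with the choice $4^{n-1}\delta_0\le \delta$, I obtain $\mathcal{H}^n(X\setminus \mathcal{A}(n,\delta))=0$, which is the first assertion. Since the target $\delta>0$ was arbitrary, it is no loss to require in addition $\delta<\min\{\delta_{n+1}/2,\delta_n\}$ when proving the second assertion; for such $\delta$, Corollary~\ref{cor:strained_points_open_dense_top_manifold} guarantees that $\mathcal{A}(n,\delta)$ is simultaneously open, dense in $X$, and a topological $n$-manifold, while the first assertion already gives that it carries full $n$-Hausdorff measure.

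The substantive obstacle lies not in this induction but in its engine, Lemma~\ref{lemma:hausdorff_measure_strained_set}, already established in Section~\ref{sect:Hausdorff_measure_strata}. In the Busemann setting tangent cones lack the metric cone structure available in Alexandrov geometry, so singular strata cannot be controlled directly through spaces of directions as in \cite{burago1992ad}; the remedy is the almost extendability of geodesics furnished by $\mathrm{MCP}(0,n)$ through Lemma~\ref{lemma:null_measure_cut_points}, combined with a Lytchak-type infinitesimal bi-Lipschitz analysis of the strainer map along the candidate singular set. Once that input is accepted, the induction above is just measure-theoretic bookkeeping.
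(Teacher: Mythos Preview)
Your proposal is correct and follows essentially the same approach as the paper: choose a seed $\delta_0\le \delta/4^{n-1}$ small enough that $4^{k-1}\delta_0<\delta_k$ for all $k$, use Lemma~\ref{lemma:null_measure_cut_points} for the base case, and for the inductive step cover $\mathcal{A}(k,4^{k-1}\delta_0)$ by countably many open sets on which a common $(k,4^{k-1}\delta_0)$-strainer persists, then apply Lemma~\ref{lemma:hausdorff_measure_strained_set} on each piece. The only cosmetic difference is that the paper extracts the countable cover by exhausting the open set with compacta and covering each by finitely many balls, whereas you invoke second countability directly; both are routine.
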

\begin{proof}
    It suffices to show the first claim for $0<\delta<\delta_n$.
    For $k=1,\ldots,n$, denote $\delta'_k:=\delta/4^{n-k}<\delta_k$.
    We claim that $\mathcal{H}^n(\mathcal{A}(k-1,\delta'_{k-1})\setminus \mathcal{A}(k,\delta'_k))=0$ for $k=2,\ldots,n$.
    Indeed, for each $x\in \mathcal{A}(k-1,\delta'_{k-1})$, by Lemma \ref{lemma:strained_point_openness}, we can find $r_x>0$ and a $(k-1)$-tuple $(p_1,\ldots,p_{k-1})$, such that $B(x,r_x)\subset \mathcal{A}(k-1,\delta'_{k-1})$ and $(p_1,\ldots,p_{k-1})$ is a $(k-1,\delta'_{k-1})$-strainer on $B(x,r_x)$.
    Now since $X$ is proper, by approximating $\mathcal{A}(k-1,\delta'_{k-1})$ by a sequence of increasing compact subsets $K_j$ and then covering $K_j$ by finitely many balls $B(x,r_x)$ with $x\in K_j$, we can cover $\mathcal{A}(k-1,\delta'_{k-1})$ by countably many balls $\{B(x_l, r_l)\}_{l\in \mathbb{N}}$.
    By Lemma \ref{lemma:hausdorff_measure_strained_set}, it follows that $\mathcal{H}^n(B(x_l,r_l)\setminus \mathcal{A}_x(k,\delta'_k))=0$ for all $l\in \mathbb{N}$.
    Thus, $\mathcal{H}^n(\mathcal{A}(k-1,\delta'_{k-1})\setminus \mathcal{A}(k, \delta'_k))=0$ for $k=2,\ldots,n$. 
    On the other hand, by Lemma \ref{lemma:null_measure_cut_points}, we have $\mathcal{H}^n(X \setminus \mathcal{A}(1,\delta'_1))=0$.
    Therefore, it follows that
    \begin{equation}
        \mathcal{H}^n(X \setminus \mathcal{A}(n,\delta))\leq \mathcal{H}^n\left(X\setminus \mathcal{A}(1,\delta'_1)\right)+\sum_{k=2}^{n}\mathcal{H}^n\left(\mathcal{A}(k-1,\delta'_{k-1})\setminus \mathcal{A}(k, \delta'_k)\right)=0, 
    \end{equation} 
    which is the thesis.

    For $\delta<\delta_{n+1}/2$, by Corollary \ref{cor:strained_points_open_dense_top_manifold} and the first assertion, the set $\mathcal{A}(n,\delta)$ is a topological $n$-manifold, which is open and dense in $X$ and has full $n$-Hausdorff measure.
\end{proof}

\subsection{Rectifiability and Banach tangent cones}\label{sect:Rectifiability_Banach_tangent_cones}

In this subsection, we derive several structural theorems for finite-dimensional $S$-concave, locally semi-convex Busemann concave spaces.

We first show the $n$-rectifiability.
For more about rectifiability on general metric spaces, we refer to \cite{bate2022characterising,guy_c_david2015GAFA,bate2017characterizations}.

\begin{theorem}[n-rectifiable]\label{thm:n_rect}
    Let $(X,d)$ be an $n$-dimensional $S$-concave, locally semi-convex Busemann concave space.
    Then $X$ is $n$-rectifiable.
\end{theorem}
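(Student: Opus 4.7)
The plan is to cover the full-measure set $\mathcal{A}(n,\delta)$ (for a well-chosen $\delta$) by countably many bi-Lipschitz images of open subsets of $\mathbb{R}^n$ via strainer maps, and then absorb the negligible complement using Theorem \ref{thm:full_measure_n_strained_points}.

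First I would fix $\delta \in (0,\delta_{n+1}/2)$. Since the strainer number of $X$ equals $n$ by Lemma \ref{lemma:strainer_num_equal_H_dim}, every point $y \in X$ admits a neighborhood containing no $(n+1,\delta')$-strained point for some $\delta' < \delta_{n+1}$; the self-improvement property (Lemma \ref{lemma:self_improvement_strainer}, as used in Remark \ref{rmk:self_improvement_strainer}) then upgrades this to the non-existence of $(n+1,2\delta)$-strained points in some possibly smaller neighborhood of $y$. Consequently, for each $x \in \mathcal{A}(n,\delta)$, Proposition \ref{prop:bi_Lip_k_strainer} applies and furnishes an open neighborhood $U_x$ of $x$ on which the associated $(n,\delta)$-strainer map $f_x : U_x \to \mathbb{R}^n$ is a bi-Lipschitz homeomorphism onto an open subset of $\mathbb{R}^n$. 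In particular, each inverse $g_x := f_x^{-1} : f_x(U_x) \subset \mathbb{R}^n \to X$ is a Lipschitz map whose image $U_x$ is contained in $\mathcal{A}(n,\delta)$.

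Next I would invoke the properness of $(X,d)$ from Proposition \ref{prop:Buseman_concave_MCP} (hence separability and the Lindel\"of property) to extract a countable subcover $\{U_{x_i}\}_{i \in \mathbb{N}}$ of $\mathcal{A}(n,\delta)$ from the open cover $\{U_x\}_{x \in \mathcal{A}(n,\delta)}$. The family $\{g_{x_i}\}_{i \in \mathbb{N}}$ is then a countable collection of Lipschitz maps from measurable subsets of $\mathbb{R}^n$ into $X$ whose images exhaust $\mathcal{A}(n,\delta)$. Combining this with the null-measure complement
\begin{equation*}
    \mathcal{H}^n\bigl(X \setminus \mathcal{A}(n,\delta)\bigr) = 0
\end{equation*}
from Theorem \ref{thm:full_measure_n_strained_points} produces a countable Lipschitz covering of $X$ from $\mathbb{R}^n$ up to an $\mathcal{H}^n$-null set, which is exactly the definition of $n$-rectifiability recalled in Section \ref{subsect:H_measure_dimensions_rect}.

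I do not anticipate a substantive obstacle here, as all ingredients are already in place earlier in the paper. The only delicate point is the uniform choice of $\delta$ small enough to simultaneously (i) keep $\mathcal{A}(n,\delta)$ of full $n$-Hausdorff measure, and (ii) exclude $(n+1,2\delta)$-strainers locally around every point of $\mathcal{A}(n,\delta)$ so that Proposition \ref{prop:bi_Lip_k_strainer} applies; this is exactly what taking $\delta < \delta_{n+1}/2$ together with the strainer-number/self-improvement argument above achieves.
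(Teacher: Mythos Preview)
Your proposal is correct and follows essentially the same route as the paper's own proof: fix $\delta<\delta_{n+1}/2$, use the strainer number together with self-improvement to exclude $(n+1,2\delta)$-strainers locally, apply Proposition~\ref{prop:bi_Lip_k_strainer} to get bi-Lipschitz strainer charts on neighborhoods of points in $\mathcal{A}(n,\delta)$, extract a countable subcover via properness, and absorb the complement using Theorem~\ref{thm:full_measure_n_strained_points}. The only cosmetic difference is that the paper extracts the countable cover by exhausting $\mathcal{A}(n,\delta)$ with compacta and covering each finitely, whereas you invoke the Lindel\"of property directly; both are immediate consequences of properness.
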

\begin{proof}
    Let $\delta<\delta_{n+1}/2$.  
    For any $x\in \mathcal{A}(n,\delta)$, let $r_x>0$ be such that $B(x,r_x)\subset \mathcal{A}(n,\delta)$ and that there exists a $(n,\delta)$-strainer map $f_x$ on $B(x,r_x)$.
    By Lemma \ref{lemma:strainer_num_equal_H_dim}, the strainer number of $X$ is $n$.
    Thus, by Proposition \ref{prop:bi_Lip_k_strainer}, we can find a neighborhood $U_x\subset B(x,r_x)$ such that $f_x$ is a bi-Lipschitz homeomorphism from $U_x$ onto $f_x(U_x)$.
    Cover $\mathcal{A}(n,\delta)$ by at most countably many open subsets $U_i:=U_{x_i}$ as the proof of Theorem \ref{thm:full_measure_n_strained_points}.
    Let $A_i:=f_{x_i}(U_i)$ and $g_i:=f^{-1}_{x_i}:A_i\to U_i$ be the inverse map of $f_{x_i}$.
    Then by Theorem \ref{thm:full_measure_n_strained_points}, we have $\mathcal{H}^n(X \setminus \cup_i g_i(A_i))=0$, which shows that $X$ is $n$-rectifiable.
\end{proof}

The next theorem, concerning Banach tangent cones, is a direct consequence of $n$-rectifiability and Kirchheim's local structure theorem of rectifiable sets in metric spaces \citep[Theorem 9]{kirchheim1994rectifiable}, which is well-known to experts in the field.
For the sake of completeness, we provide a detailed proof here.
\begin{theorem}[Unique Banach tangent cones]\label{thm:Banach_tangent_cone}
    Let $(X,d)$ be an $n$-dimensional $S$-concave, locally semi-convex Busemann concave space.
    Then $\mathcal{H}^n$-almost every point admits a unique tangent cone $(T_xX, d_x, o)$, which is isometric to a finite-dimensional Banach space.
\end{theorem}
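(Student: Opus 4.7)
The plan is to combine the $n$-rectifiability established in Theorem \ref{thm:n_rect} with Kirchheim's local structure theorem \cite[Theorem 9]{kirchheim1994rectifiable} for rectifiable sets in metric spaces, and then identify the resulting Gromov--Hausdorff limit of blow-ups with the intrinsic Busemann tangent cone by means of Proposition \ref{prop:tangent_cone_busemann}. As hinted by the authors, this is essentially a synthesis of already established ingredients, but some care is needed to match the two different notions of ``tangent.''

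First I would recall, from the proof of Theorem \ref{thm:n_rect}, that $X$ admits a countable cover (up to an $\mathcal{H}^n$-null set) by open sets $U_i\subset X$ together with bi-Lipschitz homeomorphisms $f_i\colon U_i\to A_i\subset \mathbb{R}^n$, where the $f_i$ are $(n,\delta)$-strainer maps and $A_i$ are open subsets of $\mathbb{R}^n$. Let $g_i:=f_i^{-1}\colon A_i\to U_i$; each $g_i$ is a bi-Lipschitz map of a subset of Euclidean space into $X$. Kirchheim's theorem then guarantees that for each $i$ and for $\mathcal{H}^n$-a.e.\ $a\in A_i$, there exists a seminorm $s_a$ on $\mathbb{R}^n$ (the metric differential of $g_i$ at $a$) such that
\begin{equation}
    \lim_{r\searrow 0}\sup_{v,w\in \bar B(0,R)}\left|\frac{d(g_i(a+rv),\,g_i(a+rw))}{r}-s_a(v-w)\right|=0
\end{equation}
for every $R>0$. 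The lower bi-Lipschitz bound on $g_i$ forces $s_a(v)\geq c|v|$ for some $c>0$, so $s_a$ is in fact a norm and $(\mathbb{R}^n,s_a)$ is a finite-dimensional Banach space.

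Next, I would convert this uniform metric approximation into pointed Gromov--Hausdorff convergence of the blow-ups. Setting $x:=g_i(a)$, the estimate above, together with the openness of $A_i$ at $a$ and the fact that $g_i$ hits a full neighborhood of $x$ (because $f_i$ is bi-Lipschitz), produces $\varepsilon$-isometries from balls of $(X,d/r,x)$ onto balls of $(\mathbb{R}^n,s_a,0)$ as $r\searrow 0$. Hence the blow-ups $(X,d/r,x)$ converge in the pointed Gromov--Hausdorff sense to $(\mathbb{R}^n,s_a,0)$. On the other hand, Proposition \ref{prop:Buseman_concave_MCP} guarantees that $X$ is doubling, so Proposition \ref{prop:tangent_cone_busemann} identifies the intrinsic Busemann tangent cone $(T_xX,d_x,o)$ with the unique pointed Gromov--Hausdorff limit of these blow-ups. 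Combining the two yields an isometry $(T_xX,d_x,o)\cong(\mathbb{R}^n,s_a,0)$ for $\mathcal{H}^n$-a.e.\ $a\in A_i$; since $g_i$ is bi-Lipschitz, the corresponding set of exceptional points has zero $\mathcal{H}^n$-measure in $U_i$, and summing over the countable cover $\{U_i\}$ finishes the proof.

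The main obstacle, and essentially the only nontrivial bookkeeping step, is verifying rigorously that Kirchheim's pointwise metric differential estimate upgrades to pointed Gromov--Hausdorff convergence in the sense of Section \ref{sect:prelimin_notations}. This is a well-known fact in the rectifiability literature but requires care: one needs both the \emph{almost isometry} on balls (which follows directly from the uniform estimate on $B(0,R)$) and the \emph{almost surjectivity} condition, i.e.\ that every point in a ball of $(\mathbb{R}^n,s_a,0)$ is approximated by the image of some $a+rv$ with $v$ in a bounded Euclidean ball. The latter is where the bi-Lipschitz property of $g_i$ (as opposed to mere Lipschitz) is essential, since it ensures that the chart $g_i$ covers a definite neighborhood of $x$. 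Once this identification is made, the Banach conclusion and uniqueness are immediate from Proposition \ref{prop:tangent_cone_busemann}.
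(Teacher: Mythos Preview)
Your proposal is correct and follows essentially the same strategy as the paper: combine the $n$-rectifiability of Theorem \ref{thm:n_rect} with Kirchheim's metric differentiation theorem, upgrade the pointwise metric-differential estimate to pointed Gromov--Hausdorff convergence of the blow-ups, and then invoke Proposition \ref{prop:tangent_cone_busemann} (via the doubling property from Proposition \ref{prop:Buseman_concave_MCP}) to identify the limit with the Busemann tangent cone $(T_xX,d_x,o)$.

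There is one mild but useful difference worth noting. The paper applies Kirchheim's theorem in the form that produces, at almost every $x$, a closed set $C_x$ with $x$ as a density point and a map $f_x\colon X\to\mathbb{R}^n$ that is asymptotically isometric on $C_x$; it then needs a separate Hausdorff-measure computation to show almost-surjectivity of $f_{x,r}$, and finally invokes \cite[Proposition 3.1]{donne2010unique_tangent_cone} to pass from $\mathrm{Tan}(C_x,d,x)$ to $\mathrm{Tan}(X,d,x)$. You instead exploit the stronger information coming from the proof of Theorem \ref{thm:n_rect}, namely that the strainer charts $g_i$ are bi-Lipschitz onto \emph{open} subsets of $X$; this makes the almost-surjectivity of the blow-up maps automatic and lets you bypass both the density-point argument and the Le Donne reference. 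Your route is thus slightly more self-contained, at the cost of relying on the specific open-chart structure produced by strainer maps rather than on general rectifiability machinery.
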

\begin{proof}
    Our proof closely follows \citep[Theorem 6.6]{bate2022characterising}.
    Since $X$ is $n$-rectifiable, it follows from Kirchheim's local structure theorem \cite[Theorem 9]{kirchheim1994rectifiable} that for $\mathcal{H}^n$-a.e. point $x$, we can find a norm $\|\cdot\|_x$ on $\mathbb{R}^n$, a map $f_x:X\to \mathbb{R}^n$ and a closed subset $C_x\subset X$ such that $f_x(x)=0$, $x$ is a spherical density point\footnote{Given a metric space $(X,d)$, we say $x\in X$ is a {spherical} density point of $E\subset X$ with respect to $\mathcal{H}^n$ if $\lim_{r\to 0}\mathcal{H}^n(B(x,r)\cap E)/(\omega_nr^n)=1$, see \cite[Definition 2.4.1]{ambrosio2004topics}.} of $C_x$ with respect to $\mathcal{H}^n$, and
    \begin{equation}
        \lim_{r\searrow 0} \sup\left\{\left|1-\frac{\| f_x(y)-f_x(z)\|_x}{|y,z|}\right|: y\neq z, y,z\in B(x,r)\cap C_x\right\}=0.
    \end{equation}
    Fix such a point $x\in X$.
    Define $\varepsilon_r$ as
    \begin{equation}
        \varepsilon_r:=\sup\left\{\left|1-\frac{\| f_x(y)-f_x(z)\|_x}{|y,z|}\right|: y\neq z, y,z\in B(x,\sqrt{r})\cap C_x\right\}.
    \end{equation}
    Let $C_{x,r}:=B(x,\sqrt{r})\cap C_x$ and $f_{x,r}:=f_x/r$.
    Then for any $y,z\in C_{x,r}$ with $|yz|/r\leq \min\{r^{-1/2},\varepsilon_{r}^{-1/2}\}$, it follows that
    \begin{equation}\label{eq:Banach_tangent_cone_1}
        \left|\frac{|y,z|}{r}- \|f_{x,r}(y)-f_{x,r}(z)\|_x\right|
        \leq
        \varepsilon_r \frac{|y,z|}{r}
        \leq
        \varepsilon_r^{1/2}.
    \end{equation}
    This implies that the map $f_{x,r}:(C_{x,r},d/r,x)\to (\mathbb{R}^n,\|\cdot\|_x, 0^n)$ is a bi-Lipschitz map with the bi-Lipschitz constant $L_r$ satisfying
    \begin{equation}
        L_r:=\frac{1+\varepsilon_r}{1-\varepsilon_r^2}\to 1,\quad \text{as $r\to 0$}.
    \end{equation}
    Thus, for any $R>0$, it follows that
    \begin{multline}
        \mathcal{H}^n_{\|\cdot\|_x}\left(B(0^n,R)\setminus f_{x,r}(C_{x,r})\right)
        \leq
        \mathcal{H}^n_{\|\cdot\|_x}\left(B(0^n,R)\setminus f_{x,r}\left(C_{x,r}\cap B_X(x,Rr/L_r)\right)\right)\\
        \leq
        \mathcal{H}^n_{\|\cdot\|_x}(B(0^n,R)) - \mathcal{H}^n_{\|\cdot\|_x}\left(f_{x,r}(C_{x,r}\cap B_X(x,Rr/L_r))\right)\\
        \leq
        \mathcal{H}^n_{\|\cdot\|_x}(B(0^n,R)) - L_r^{-n} \mathcal{H}^n_{d/r}\left(C_{x,r}\cap B_X(x, rR/L_r)\right)\\
        \leq
        \mathcal{H}^n_{\|\cdot\|_x}(B(0^n,R)) - L^{-n}_r r^{-n}\mathcal{H}^n_d\left(C_{x,r}\cap B_X(x,rR/L_r)\right)\\
        =
        \mathcal{H}^n_{\|\cdot\|_x}\left(B(0^n,R)\right) -(\omega_n R^n)\frac{\mathcal{H}^n_d(C_{x,r}\cap B_X(x,rR/L_r))}{\omega_n(rR/L_r)^n}\frac{1}{L_r^{2n}}\\
        \rightarrow
        \omega_n R^n - \omega_n R^n =0,\quad \text{as $r\to 0$},
    \end{multline}
    where we use the identity $\mathcal{H}^n_{\|\cdot\|_x}(B(0^n,R))=\omega_nR^n$ (see \cite[Lemma 6]{kirchheim1994rectifiable}) and the facts that $x$ is a spherical density point of $C_x$ and $L_r\to 1$ in the last step. 
    This implies that for any $R, \delta>0$, the ball $B(0^n,R)$ is contained into the $\delta$-neighborhood of $f_{x,r}(C_{x,r})$ for $r>0$ small enough.
    Together with the inequality \eqref{eq:Banach_tangent_cone_1}, it follows that $f_{x,r}:(C_x,d/r,x)\to (\mathbb{R}^n,\|\cdot\|_x, 0^n)$ is an $\varepsilon'_r$-isometry for some $\varepsilon'_r\to 0$ as $r\to 0$.
    Therefore, $(\mathbb{R}^n,\|\cdot\|_x,0^n)$ is the unique pointed Gromov--Hausdorff limit of the blow-ups $\{C_x, d/r,x\}_r$.

    On the other hand, since the point $x$ is a spherical density point of $C_x$ and $(X,d,\mathcal{H}^n)$ satisfies the Bishop--Gromov volume inequality $\mathrm{BG}(0,n)$ (Proposition \ref{prop:Buseman_concave_MCP}), it follows that
    \begin{equation}
        1\geq 
        \lim_{r\searrow 0}\frac{\mathcal{H}^n\left(B(x,r)\cap C_x\right)}{\mathcal{H}^n\left(B(x,r)\right)}
        =
        \lim_{r\searrow 0}\frac{\mathcal{H}^n\left(B(x,r)\cap C_x\right)}{\omega_nr^n}\frac{\omega_nr^n}{\mathcal{H}^n\left(B(x,r)\right)}
        \geq 1,
    \end{equation}
    which implies that $x$ is a density point of $C_x$ with respect to $\mathcal{H}^n$ in the usual sense.
    Together with the fact that the Hausdorff measure $\mathcal{H}^n$ is an outer measure, it follows from \citep[Proposition 3.1]{donne2010unique_tangent_cone} that
    \begin{equation}
        \mathrm{Tan}(C_x,d,x)=\mathrm{Tan}(X,d,x)=\{(T_xX, d_x, o)\}.
    \end{equation}
    Thus, $(T_xX,d_x,o)=(\mathbb{R}^n,\|\cdot\|_x, 0^n)$, up to an isometry.
\end{proof}
\begin{remark}
    It remains an open question for us whether, under the assumptions of Theorem \ref{thm:Banach_tangent_cone}, all Banach tangent cones are isometric to the same finite-dimensional Banach space. 
    In contrast, it has been shown in \cite[Theorem 1.2]{ivanov2019rigidity} that, within the class of Finsler manifolds, Busemann spaces of non-positive curvature are precisely those with Berwald metrics of non-positive flag curvature. 
    In particular, all tangent cones of connected Finsler manifolds of Busemann non-positive curvature are isometric to the same normed vector space.
\end{remark}
\begin{remark}
    We remark that rectifiability as well as the a.e. unique Banach tangent cones can actually hold for more general $\mathrm{MCP}(K,N)$ spaces satisfying a.e. unique tangent cones and non-collapsing condition.
    We refer to \cite{mondino2025rectifiability} for a detailed discussion on this topic.
\end{remark}

We conclude this subsection with a geometric characterization of tangent cones, under an additional assumption on the distance functions from a point.
\begin{corollary}\label{cor:characterize_Banach_tangent_cone}
    Let $X$ be an $n$-dimensional $S$-concave, locally semi-convex Busemann concave space.
    Suppose that for every point $x \in X$, there exists $r_x > 0$ such that for every constant-speed geodesic $\xi: [0,1] \to B(x, r_x)$ and every point $y \in B(x, r_x)$, the distance function from $y$ satisfies the $p$-uniform convexity inequality
    \begin{equation}\label{eq:characterize_Banach_tangent_cone}
        |y\xi(1/2)|^p
        \leq
        \frac{1}{2}|y\xi(0)|^p + \frac{1}{2}|y\xi(1)|^p - \frac{C'}{4}\left|\xi(0)\xi(1)\right|^p,
    \end{equation}
    for some $p \geq S+1$ and $C' = 4/2^p$.
    Then for $\mathcal{H}^n$-almost every point $x$, the unique tangent cone $(T_x, d_x, o)$ is isometric to a Banach space $(E_x,\|\cdot\|_x, o_x)$ with a strictly convex norm $\|\cdot\|_x$, which is both $2$-uniformly smooth and $p$-uniformly convex.
\end{corollary}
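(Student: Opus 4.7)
\emph{Proof proposal.} By Theorem \ref{thm:Banach_tangent_cone}, for $\mathcal{H}^n$-almost every $x \in X$ the tangent cone $(T_xX, d_x, o)$ is already isometric to a finite-dimensional Banach space $(E_x, \|\cdot\|_x, o_x)$; by Proposition \ref{prop:tangent_cone_busemann}, combined with the doubling property from Proposition \ref{prop:Buseman_concave_MCP}, this tangent cone coincides with the pointed Gromov--Hausdorff limit of the blow-ups $(X, d/\lambda, x)$ as $\lambda \searrow 0$. The plan is to transport both the $S$-concavity of $X$ and the $p$-uniform convexity hypothesis \eqref{eq:characterize_Banach_tangent_cone} to $E_x$ via this GH-limiting procedure, applied along a geodesic between approximating points, and to exploit the constraint $C' \geq 2^{2-p}$ to pin down the limit midpoint as the algebraic midpoint $(u+v)/2$.

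Fix $x$ as above and arbitrary $u, v \in E_x$. Using the pointed GH convergence, one chooses approximating sequences $u_\lambda, v_\lambda \in X$ with $d(x, u_\lambda)/\lambda \to \|u\|_x$, $d(x, v_\lambda)/\lambda \to \|v\|_x$, and $d(u_\lambda, v_\lambda)/\lambda \to \|u-v\|_x$. For all sufficiently small $\lambda$, a simple triangle-inequality estimate places $u_\lambda, v_\lambda$ and every constant-speed geodesic $\xi_\lambda: [0,1] \to X$ joining them inside $B(x, r_x)$, so the hypothesis \eqref{eq:characterize_Banach_tangent_cone} is applicable there. Let $m_\lambda := \xi_\lambda(1/2)$. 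Using properness of $X$ (Proposition \ref{prop:Buseman_concave_MCP}), one extracts a subsequence along which $m_\lambda$, viewed in the blow-ups, converges to a point $m^* \in E_x$ that is necessarily a midpoint of $u$ and $v$ in $E_x$.

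Applying \eqref{eq:characterize_Banach_tangent_cone} at a variable base point $y_\lambda \in B(x, r_x)$ with $y_\lambda/\lambda \to y$ for an arbitrary $y \in E_x$, dividing by $\lambda^p$, and passing to the limit yields
\begin{equation*}
\|y - m^*\|_x^p \leq \tfrac{1}{2}\|y - u\|_x^p + \tfrac{1}{2}\|y - v\|_x^p - \tfrac{C'}{4}\|u - v\|_x^p, \qquad \forall\, y \in E_x.
\end{equation*}
Specializing to $y = (u+v)/2$, the right-hand side equals $\|u-v\|_x^p(2^{-p} - C'/4) \leq 0$ by the assumption $C' \geq 2^{2-p}$, which forces $m^* = (u+v)/2$. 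With this identification, taking $y = o_x$ (approximated by $y_\lambda = x$) in the above inequality yields the $p$-uniform convexity of $E_x$ with constant $C'$, while the analogous passage of the $S$-concavity inequality \eqref{eq:S-concave} at the base point $x$ along $\xi_\lambda$ to the limit, combined with $m^* = (u+v)/2$, gives the $2$-uniform smoothness of $E_x$ with constant $S$. Strict convexity of $\|\cdot\|_x$ is then an immediate consequence of $p$-uniform convexity with $C' > 0$.

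The main obstacle is precisely the identification $m^* = (u+v)/2$: a priori, a Banach space that is not known to be strictly convex may admit many midpoints, and the midpoint selected as a GH limit of midpoints in $X$ need not equal the algebraic midpoint. The hypothesis $C' \geq 2^{2-p}$ is engineered exactly to resolve this chicken-and-egg difficulty: the translation trick of setting $y = (u+v)/2$ in the limiting $p$-uniform convexity inequality squeezes $\|m^* - (u+v)/2\|_x^p$ between $0$ and a non-positive quantity, which unlocks both the $p$-uniform convexity and the $2$-uniform smoothness simultaneously.
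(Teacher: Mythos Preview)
Your proposal is correct and follows essentially the same strategy as the paper: pass both the $S$-concavity and the local $p$-uniform convexity inequalities to the tangent cone via GH-convergence of blow-ups, and use the constraint $C'\geq 2^{2-p}$ to force the limit midpoint to be the algebraic midpoint $(u+v)/2$. The paper organizes the limit slightly differently (working through the pre-tangent cone with a double limit $\theta\to 0$ then $n\to\infty$, and testing the inequality at an \emph{arbitrary} metric midpoint $w$ rather than directly at $(u+v)/2$), but the key computation $\tfrac12\|w-u\|^p+\tfrac12\|w-v\|^p-\tfrac{C'}{4}\|u-v\|^p\leq 0$ and the subsequent derivation of $2$-uniform smoothness and $p$-uniform convexity are the same.
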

\begin{proof}
    By Theorem \ref{thm:Banach_tangent_cone}, we may assume that the tangent cone $(T_xX,d_x,o)$ at $x\in X$ is isometric to a finite-dimensional Banach space $(E_x, \|\cdot\|_x, o_x)$.
    We first show that $E_x$ is uniquely geodesic.
    Let $u,v\in E_x$ be two points different from $o_x$, and $\xi:[0,1]\to E_x$ be a constant-speed geodesic from $u$ to $v$ and let $w:=\xi(1/2)$ be the midpoint of $\xi$.
    Let $a:=d_x(u,w)=\|u-v\|_x/2$.
    For simplicity, we naturally identify the points of $T_xX$ with $E_x$ by the same notation.
    From the construction of tangent cones, we can find sequences of points $u_n:=(\gamma_n,t_n), v_n:=(\eta_n,s_n), w_n:=(\xi_n,l_n)\in \hat{T}_xX$ such that $u_n\to u, v_n\to v, w_n\to w$ in $E_x$.
    For $\theta\in [0,1]$, let $x_{n,\theta}:=\xi_n(\theta l_n)$ and $c_{n,\theta}:[0,1]\to X$ be a constant-speed geodesic from $\gamma_n(\theta t_n)$ to $\eta_n(\theta s_n)$.
    Note that for $\theta\in (0,1)$ sufficiently small and $n$ sufficiently large, we have $x_{n,\theta}, c_{n,\theta}$ are contained in $B(x,r_x)$.
    Applying the $p$-uniformly convex inequality \eqref{eq:characterize_Banach_tangent_cone} to the point $x_{n,\theta}$ and the geodesic $c_{n,\theta}$, and then dividing $\theta^p$ on both sides, it follows that
    \begin{multline}\label{eq:characterize_Banach_tangent_cone_2}
        \left|x_{n,\theta}c_{n,\theta}(1/2)\right|^p_{\theta}
        \leq
        \frac{1}{2}\left|x_{n,\theta}\gamma_n(\theta t_n)\right|^p_{\theta} + \frac{1}{2}\left|x_{n,\theta}\eta_n(\theta s_n)\right|_{\theta}^p
        - \frac{C'}{4}\left|\gamma_n(\theta t_n)\eta_n(\theta s_n)\right|^p_{\theta},
    \end{multline}
    where $|\cdot|_{\theta}:=d/\theta$ denotes the scaled distance function.
    Since $X_{\theta}:=(X,d/\theta,x)$ pointed Gromov--Hausdorff converges to $(T_xX,d_x,o)$, one can readily check that $\gamma_n(\theta t_n)\to u_n, \eta_n(\theta s_n)\to v_n, x_{n,\theta}\to w_n$ as $\theta\to 0$.
    Furthermore, it follows from the Arzel\`a--Ascoli theorem that $c_{n,\theta}\subset X_{\theta}$ uniformly converges to some constant-speed geodesic $c_n\subset E_x$ from $u_n$ to $v_n$, up to a subsequence of $\theta$.
    By taking $\theta\to 0$ along the subsequence on both sides of the inequality above, it follows that
    \begin{equation}\label{eq:characterize_Banach_tangent_cone_3}
        \left\|w_n-c_n(1/2) \right\|^p_x
        \leq
        \frac{1}{2}\left\|w_n-u_n \right\|^p_x + \frac{1}{2}\left\|w_n-v_n \right\|^p_x- \frac{C'}{4}\left\|u_n-v_n \right\|^p_x.
    \end{equation}
    By letting $n\to \infty$ and applying the Arzel\`a--Ascoli theorem again to the geodesic $c_n$, it follows that
    \begin{equation}\label{eq:characterize_Banach_tangent_cone_4}
        \|w- c(1/2)\|^p_x
        \leq
        \frac{1}{2}\|w-u\|^p_x + \frac{1}{2}\|w-v\|^p_x -\frac{C'}{4}\|u-v\|^p_x
        \leq
        a^p- \frac{C'}{4}\left(2a\right)^p
        \leq 0,
    \end{equation}
    where $c_n$ converges to some constant-speed geodesic $c$ from $u$ to $v$, up to some subsequence of $n$.
    This implies that $w=c(1/2)$.
    Note that the proof above implies that the equality $w=c(1/2)$ holds for any accumulation point $c$ of $c_n$ and $c_{n,\theta}$, and for any choice of geodesic $c(n,\theta)$ connecting $\gamma_n(\theta t_n)$ and $\eta_n(\theta s_n)$.
    Since $w$ is also an arbitrary midpoint of $u$ and $v$, and is independent of the choice of subsequences of $n$ and $\theta$, we conclude that $u$ and $v$ admit a unique midpoint.
    Thus, $(E_x,\|\cdot\|_x,o_x)$ is uniquely geodesic.
    By \citep[Proposition 7.2.1]{papadopoulos2014metric}, it follows that $(E_x,\|\cdot\|_x, o_x)$ is a strictly convex Banach space.

    For the $2$-uniform smoothness and $p$-uniform convexity of $E_x$, by applying the $S$-concave inequality to the point $x$ and the geodesic $c_{n,\theta}$, it follows that
    \begin{equation}
        \left|x c_{n,\theta}(1/2)\right|^2
        \geq
        \frac{1}{2}\left|xc_{n,\theta}(0)\right|^2 + \frac{1}{2}\left|xc_{n,\theta}(1)\right|^2 - \frac{S}{4}\left|c_{n,\theta}(0)c_{n,\theta}(1)\right|^2.
    \end{equation}
    By taking $\theta \to 0$ first and then $n\to \infty$, we obtain that
    \begin{equation}
        \left\|\frac{u+v}{2}\right\|_x^2=\|w\|^2_x
        \geq
        \frac{1}{2}\| u\|^2_x + \frac{1}{2}\|v\|^2_x- \frac{S}{4}\| u-v\|^2_x.
    \end{equation}
    By the same argument, it follows from the inequality \eqref{eq:characterize_Banach_tangent_cone} that
    \begin{equation}
        \left\|\frac{u+v}{2}\right\|^p_x
        \leq
        \frac{1}{2}\|u\|^p_x + \frac{1}{2}\|v\|^p_x - \frac{C'}{4}\|u-v\|^p_x.
    \end{equation}
    Therefore, our claim follows.
\end{proof}
\begin{remark}\label{rmk:charcter_Banach_tangent_cone_Kell}
    The same argument in fact shows that all tangent cones of $X$ are uniquely geodesic, and are both $2$-uniformly smooth and $p$-uniformly convex.
    Furthermore, as pointed out by Kell (see \cite[Remark after Definition 2.4]{kell2019sectional}), outside a thin subset of $X$ (i.e., a set of null measure for every doubling measure on $X$), all tangent cones are finite-dimensional Carnot groups.
    In particular, away from this thin subset, tangent cones are isometric to finite-dimensional Banach spaces with strictly convex norms; see \cite[Proposition 2.5]{kell2019sectional}.
\end{remark}

\subsection{Hausdorff dimension of singular strata}\label{sect:Hausdorff_dimension_strata}

In the previous subsections, we have shown that an $n$-dimensional $S$-concave, locally semi-convex Busemann concave space $X$ admits a natural stratification $\{X_k\}_{k=0}^{n}$, where $X_n:=\mathcal{A}(n,\delta)$ and $X_k:=\mathcal{A}(k,\delta)\setminus \mathcal{A}(k+1,\delta)$ for $k=0,\ldots,n-1$ and $\delta<\delta_n$.
In this subsection, we will study the Hausdorff dimension of the singular stratum $X_k$.
For more detail of stratification of Alexandrov spaces, see \cite[Section 10.10]{burago2001course} and \cite[Section 2]{perel1994extremal}.

As noted before, the technique developed in \cite{burago1992ad} for Alexandrov spaces cannot be directly applied here due to the absence of metric cone structure for tangent cones. 
Furthermore, the technique employed in Lemma \ref{lemma:hausdorff_measure_strained_set} is not sufficient to determine the Hausdorff dimension of singular sets, since the `almost extendable' property of geodesics holds only up to an $\mathcal{H}^n$-null set and does not provide more refined measure-theoretic information.

To deal with the difficulty, we make use of the relationship between two notions of angle we introduce in Section \ref{sect:angles}, namely angles of fixed scale and angles viewed from a fixed point.
These two notions of angles are generally not equal, as illustrated in Example \ref{example:angle_asymmetry_2}.
However, when angles of common scale are small, they are almost comparable to angles viewed from a fixed point, up to some error (see Lemma \ref{lemma:variant_S_concave}).
This observation enables us to adapt the strategy from \cite[Lemma 10.5]{burago1992ad} to derive an upper bound for the maximal cardinality of $r$-separated subsets of singular sets within small cylindrical regions, by utilizing the uniform compactness of the space of directions with common length (Lemma \ref{lemma:space_direction_uniform_compact}) instead of the space of directions in Alexandrov spaces, thereby obtaining the desired dimension estimates for the singular strata.

We firs introduce the following lemma, which is nearly the same as \citep[Lemma 10.2]{lytchak2019geod} and \citep[Lemma 10.3]{burago1992ad}.
\begin{lemma}\label{lemma:number_points}
    For any $N, L \geq 1$ and any natural number $M \geq 1$, there exists a constant $\bar{K} = \bar{K}(N, L, M)$ with the following property: if $X$ is an $N$-doubling metric space and $E \subset X$ is any subset containing at least $\bar{K}$ elements, then there exist $M$ points $\{x_i\}_{i=0}^{M-1} \subset E$ such that $|x_{i+1} x_0| \geq L |x_i x_0|$ for each $i = 1, \ldots, M-2$.
\end{lemma}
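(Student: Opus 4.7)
The plan is to argue by strong induction on $M$. The base cases $M\in\{1,2\}$ are immediate (one may set $\bar K(N,L,M)=M$) since the condition $|x_{i+1}x_0|\geq L|x_ix_0|$ is vacuous. For the inductive step, assume the result holds for $M-1$ and consider $M\geq 3$. Given $E\subseteq X$ with $|E|\geq\bar K(N,L,M)$, the first step is to pick an arbitrary $p\in E$ and partition $E\setminus\{p\}$ into dyadic annuli
\[
S_k:=E\cap\{y\in X:L^k\leq|py|<L^{k+1}\},\qquad k\in\mathbb{Z}.
\]
The key observation is that if $y\in S_k$ and $y'\in S_{k'}$ with $k'\geq k+2$, then $|py'|/|py|\geq L^{k'}/L^{k+1}\geq L$; thus the task reduces to finding $M-1$ populated scales whose indices are mutually spaced by at least $2$.

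The argument then splits into two cases. If at least $2M-3$ of the $S_k$'s are non-empty, listing the populated scales as $K_1<K_2<\cdots<K_n$ and choosing $x_i\in S_{K_{2i-1}}$ for $i=1,\dots,M-1$ produces a $2$-separated sequence of scale indices that, together with $x_0:=p$, yields the desired $M$-tuple. Otherwise, at most $2M-4$ scales are populated, so pigeonhole provides a scale $k^*$ with $|S_{k^*}|\geq (|E|-1)/(2M-4)$. Since $S_{k^*}\subseteq B(p,L^{k^*+1})$ and $X$ is $N$-doubling, the ball $B(p,L^{k^*+1})$ can be covered by $C:=N^{\lceil\log_2(2L^2)\rceil}$ balls of radius $L^{k^*-1}/2$; a second pigeonhole then extracts a subset $E_2\subseteq S_{k^*}$ with $|E_2|\geq |S_{k^*}|/C$ and $\mathrm{diam}(E_2)\leq L^{k^*-1}$. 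Setting $\bar K(N,L,M):=1+(2M-4)\,C\,\bar K(N,L,M-1)$ guarantees $|E_2|\geq\bar K(N,L,M-1)$, and the induction hypothesis supplies a length-$(M-1)$ sequence $z_0,z_1,\dots,z_{M-2}\in E_2$ satisfying $|z_0z_{j+1}|\geq L|z_0z_j|$ for $j=1,\dots,M-3$. Because $z_0\in S_{k^*}$ gives $|z_0p|\geq L^{k^*}$ while $|z_0z_{M-2}|\leq\mathrm{diam}(E_2)\leq L^{k^*-1}$, one obtains $|z_0p|\geq L\cdot|z_0z_{M-2}|$, so appending $p$ at the tail yields the required sequence $(z_0,z_1,\dots,z_{M-2},p)$ with base $z_0$.

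The main obstacle, and the reason the naive argument (requiring only $M-1$ populated scales) fails, is that populated scales whose indices differ by just $1$ produce a distance ratio that can be arbitrarily close to $1$; the gap-of-$2$ requirement is what forces the threshold $2M-3$ in the first case. A secondary subtlety, specific to the doubling case, is that one cannot iterate the inductive statement directly on $E_2$ with the same $M$; instead, the doubling-controlled shrinkage of the ambient ball is precisely calibrated so that the outer point $p$ can itself serve as the large-scale extension of an internally produced length-$(M-1)$ chain, closing the induction.
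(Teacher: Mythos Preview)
Your proof is correct, and its inductive core (Case~2) is essentially the same idea as the paper's: shrink $E$ into a subset of controlled small diameter via doubling, apply the inductive hypothesis there to get an $(M-1)$-chain, and then append a single distant point to serve as $x_{M-1}$. The paper carries this out more economically: it skips the annular decomposition entirely and simply covers $E$ (of diameter $D$) by at most $C=C(N,L)$ sets of diameter $\leq D/(2L)$, picks a dense one, applies induction, and appends any $x\in E$ with $|xx_0|\geq D/2$. This yields the clean bound $\bar K=C^{M-1}$ with no case split.

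Your Case~1 (many populated $L$-adic scales, direct construction with $x_0=p$) is a valid alternative route and has the pleasant feature of avoiding induction in that regime, but it is logically unnecessary: the covering-and-append argument already handles all configurations uniformly. The extra annular pigeonhole step in your Case~2 likewise works but inflates the constant to $\bar K(N,L,M)=1+(2M-4)\,C\,\bar K(N,L,M-1)$ rather than a pure power. In short: same strategy, more machinery.
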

\begin{proof}
    Our proof is adapted from \cite[Lemma 10.2]{lytchak2019geod}.
    Let $N,L\geq 1$ be fixed.
    Let $C:=C(N,L)$ be a constant such that any set $E\subset X$ of diameter $D$ can be covered by at most $C$ subsets of diameter at most $D/(2L)$.
    
    We show the claim by induction.
    We show that the claim holds for the number $\bar{K}=C^{M-1}$.
    The case $M=1$ is trivial.
    Suppose that the claim holds for the case $k=M-1$.
    Let $E\subset X$ be a subset containing at least $C^{M-1}$ points.
    By removing the redundant elements, we may assume that $E$ is bounded with diameter $D>0$.
    We cover $E$ by at most $C$ subsets of diameter at most $D/(2L)$.
    Then it follows that there exists at least one subset $\tilde{E}$ in this covering that contains at least $C^{M-2}$ elements.
    By the assumption of induction, we can find $M-1$ points $\{x_i\}_{i=0}^{M-2}\subset \tilde{E}$ satisfying that $|x_0x_{i+1}|\geq L|x_0x_{i}|$ for $i=1,\cdots, M-3$.
    Now, pick up a point $x \in E$ such that $|x x_0| \geq D/2$.
    This is always possible; otherwise, the diameter of $E$ would be less than $D$.
    Let $x_{M-1}:=x$.
    The collection $\{x_i\}_{i=0}^{M-1}$ satisfies the required property because
    \begin{equation}
        |x_0x_{M-1}|\geq \frac{D}{2}\geq L\frac{D}{2L}\geq L|x_0x_i|,
    \end{equation}
    for all $i=1,\cdots,M-2$.
\end{proof}

We next present the second key lemma, which quantifies how the ratio of side-lengths controls the discrepancy between angles of common scale and angles viewed from a fixed point, when the angle of common scale is small.
In the following, for a given $\delta\in (0, 1)$, we define $\beta(\delta)$ as
\begin{equation}\label{def:beta_delta_num}
    \beta(\delta):=\frac{(1-\cos\delta)\sin\delta}{2(1+\sin\delta)}.
\end{equation}
Notice that $\beta(\delta)< 1/4$ and $\beta(\delta)=O(\delta^3)$ as $\delta\to 0$.
\begin{lemma}\label{lemma:variant_S_concave}
    Let $X$ be an $S$-concave Busemann concave space, and $0<\delta <1/2$.
    There exists a constant $L_0 = L_0(\delta, S) > 1$ such that the following holds: let $x,y,z\in X$ be any points such that $|xz|/|xy| \geq L_0$, and let $\gamma, \eta$ be unit-speed geodesics from $x$ to $y$ and $z$ respectively. 
    If the angle $\angle_x(\gamma(r), \eta(r)) < \beta(\delta)$ for some $r > 0$, then we have $\tilde{\angle_x}(y,z)< \delta$.
    Moreover, $z$ is a $(1, 2\delta)$-strainer at $y$ with $x$ as an opposite strainer.
\end{lemma}
To prove this lemma, we need the following technical lemma from \cite[Lemma 6]{lebedeva2021self}.
\begin{lemma}[Lebedeva--Ohta--Zolotov, \cite{lebedeva2021self}]\label{lemma:Lebedeva_Ohta_Zolotov}
    Let $0<\delta<\pi/2$ and $(X,d)$ be a metric space, and take $x\in X$ and $y,z\in X\setminus\{x\}$.
    If there exists $z'\in X$ such that $|xz|=|xz'|+|z'z|$ and $|yz'|\leq \beta(\delta)|xy|$, then we have $\tilde{\angle}_x(y,z)<\delta$.
\end{lemma}
We are now in a position to prove Lemma \ref{lemma:variant_S_concave}.
\begin{proof}[Proof of Lemma \ref{lemma:variant_S_concave}]
    Let $0<\delta<1/2$ be fixed.
    We choose $L_0:=L_{0}(\delta, S)>1$ large enough such that $\arccos(1-S/(2(L_0-1)))< \delta$ and $\arcsin(1/L_0)< \delta$.
    We claim that $L_0$ satisfies the requirement of lemma.
    Indeed, let $x\in X$ and $\gamma,\eta$ be any geodesics from $x$ to $y, z$ respectively such that $|xz|/|xy|\geq L_0$ and the angle of common scale $\angle_x(\gamma(r),\eta(r))<\beta(\delta)$ for some $r>0$.
    By the positive scaling-invariance of angles of fixed scale, Lemma \ref{lemma:relation_metric_and_angle}, we may assume that $r=|xy|$.
    Let $z':=\eta(r)$.
    From the definition of angles of fixed scale, it follows that $\tilde{\angle}_x(\gamma(r),\eta(r))=\tilde{\angle}_x(y,z')<\beta(\delta)$.
    From basic plane geometry theory, we can see that
    \begin{equation}
        |yz'|\leq 2\sin \left(\frac{\beta(\delta)}{2}\right)|xy|\leq \beta(\delta)|xy|.
    \end{equation}
    Since $|xz|=|xz'|+|z'z|$ by our choice of $z'$, it follows from Lemma \ref{lemma:Lebedeva_Ohta_Zolotov} that $\tilde{\angle}_x(y,z)<\delta$.
    This proves the first part of lemma.
    As for the second part of lemma, by our choice of $L_0$, it follows that
    \begin{equation}
        \tilde{\angle}_z(x,y)\leq \arcsin\left(\frac{|xy|}{|xz|}\right)\leq \arcsin\left(\frac{1}{L_0}\right)<\delta.
    \end{equation}
    This, together with $\tilde{\angle}_x(y,z)<\delta$, implies that $\tilde{\angle}_y(x,z)> \pi- 2\delta$.
    Furthermore, our choice of $L_0$ also implies that
    \begin{equation}
        \bar{\delta}_S(|xy|;|yz|)
        =
        \arccos\left(1-S\frac{|xy|}{2|yz|}\right)
        \leq
        \arccos\left(1-\frac{S}{2(L_0-1)}\right)
        <\delta.
    \end{equation}
    According to Definition \ref{def:1-strainer}, $z$ is a $(1,2\delta)$-strainer at $y$ with $x$ as an opposite strainer.
\end{proof}

The next lemma provides an estimate for the maximal possible cardinality of a $\delta^{-1}r$-separated subset within a cylindrical region of a small ball.
Before stating the lemma, we recall a notion called \emph{R-long} strainers introduced in \cite{burago1992ad}.
\begin{definition}[$R$-long strainers]
    For $R>0$, a $(k,\delta)$-strainer $(p_1\ldots,p_k)$ at a point $x$ with the opposite strainer $(q_1,\ldots,q_k)$ is said to be \emph{$R$-long} if $\min_{i=1,\ldots,k}\{|p_ix|,|q_ix|\}> \delta^{-1}R$.
    We denote by $\mathcal{A}(k,\delta,R)\subset \mathcal{A}(k,\delta)$ the set of $(k,\delta)$-strained points which admits an $R$-long $(k,\delta)$-strainer.
\end{definition}

We are now in a position to state the lemma.
Our proof follows a similar strategy as \cite[Lemma 10.5]{burago1992ad}, based on the uniform compactness of space of directions with common length (Lemma \ref{lemma:space_direction_uniform_compact}) and Lemma \ref{lemma:variant_S_concave}. 
Recall that $\beta_E(r)$ denotes the largest possible cardinality of any maximal $r$-separated subset of $E$.
\begin{lemma}\label{lemma:hausdorff_dim_est}
    Let $X$ be an $n$-dimensional $S$-concave, $(C,D)$-semi-convex Busemann concave space, and let $\delta<\delta_k$ for $k\in \{1,\ldots,n\}$.
    Suppose that $(p_1,\ldots,p_k)$ is an $R$-long strainer on a neighborhood $U$ of $x\in X$.
    Then there exists $r_x>0$ such that $B(x,r_x)\subset U$, and for any $0<r\leq \delta r_x$ and any $m=(m_1,\ldots,m_k)\in \mathbb{Z}^k$, it holds
    \begin{equation}
        \beta_{D_x(r,m)\setminus \mathcal{A}(k+1,2\delta,r)}(\delta^{-1}r)
        \leq
        K(N,\delta,S),
    \end{equation}
    where $D_x(r,m):=\{z\in B(x,r_x): 0.1(m_i-1)r\leq |p_ix|-|p_iz|\leq 0.1m_ir, i=1,\cdots,k\}$ is a cylindrical region in $B(x,r_x)$, and $K(N,\delta,S)$ is a constant depending only on the doubling constant $N$ of $X$, $\delta$, and the uniform smoothness constant $S$.
\end{lemma}
\begin{proof}
    Let $0<\delta<\delta_k$ and $R>0$ be fixed. 
    We choose $r_x$ as follows.
    Choose $r_0>0$ small enough such that $B(x, r_0)\subset U$, $r_0\leq R/2$, and 
    \begin{equation}\label{eq:prop_H_dim_2}
        \arccos\left(1-\frac{\left(S+C\right)r_0}{2(\delta^{-1}R-r_0)}\right)<\delta.
    \end{equation}
    Let $r_x:=r_0/2\leq R/4$.
    Then $\bar{\delta}_{S,C}(|yz|;|p_iy|)<\delta$ for any $y,z\in B(x,r_x)$ and $i=1,\ldots,k$. 
    
    Fix $r\in (0,\delta r_x]$ and $m\in \mathbb{Z}^k$.
    We prove by contradiction that the largest possible cardinality of any $\delta^{-1}r$-separated subset of $D_x(r,m)\setminus \mathcal{A}(k+1, 2\delta, r)$ is bounded above by $K(N,\delta,S):=\bar{K}(N,L_0, M)-1$, where $\bar{K}$ is the constant in Lemma \ref{lemma:number_points}, $N$ is the doubling constant of $X$, $L_0:=L_0(\delta,S)$ is the constant in Lemma \ref{lemma:variant_S_concave}, and $M:=M(\delta)=N_0(\beta(\delta))+2$, where $N_0$ is the constant in Lemma \ref{lemma:space_direction_uniform_compact} and $\beta(\delta)$ is defined in \eqref{def:beta_delta_num}.

    Suppose that our assertion does not hold.
    Then we can find a $\delta^{-1}r$-separated subset of $D_x(r,m)\setminus \mathcal{A}(k+1,2\delta,r)$ containing at least $\bar{K}(N,L_0,M)$ points.
    By Lemma \ref{lemma:number_points}, there are $M$ points $\{x_i\}_{i=0}^{M-1}\subset D_x(r, m)\setminus \mathcal{A}(k+1,2\delta,r)$ such that $|x_0x_{i+1}|\geq L_0|x_0x_i|$ for all $i=1,\ldots, M-2$.
    Let $\bar{l}:=\min_{i=1,\ldots,M-1}\{|x_0x_i|\}$ and $\xi_i$ be a unit-speed geodesics from $x_0$ to $x_i$, and let $\bar{\xi}_i$ be the maximal extension of the geodesic $\xi_i$ for all $i=1,\ldots,M-1$.
    By Lemma \ref{lemma:space_direction_uniform_compact} and our choice of $M=N_0(\beta(\delta))+2$, we can find two indices $1\leq i<j\leq M-1$ such that $|x_0x_j|\geq L_0|x_0x_i|$ and
    \begin{equation}\label{eq:prop_H_dim_6}
        \angle_{x_0}\left((\bar{\xi}_i, \bar{l}), (\bar{\xi}_j, \bar{l})\right)
        =
        \angle_{x_0}\left(\xi_i(\bar{l}),\xi_j(\bar{l})\right)
        <
        \beta(\delta).
    \end{equation}

    In the following, we show that the $(k+1)$-tuple $(p_1,\ldots,p_k,x_j)$ is an $r$-long $(k+1, 2\delta)$-strainer at $x_i$.
    This contradicts our choice of $x_i$ that $x_i\notin \mathcal{A}(k+1, 2\delta, r)$.

    \begin{enumerate}[label=\textbf{\textsc{Step \arabic*:}}, fullwidth]
        \item In this step, we check the first two conditions in Definition \ref{def:k_strainer} for the $(k+1)$-tuple $(p_1,\ldots,p_k,x_j)$ at $x_i$.
        Note that $(p_1,\ldots,p_k)$ is an $r$-long $(k,\delta)$-strainer at $x_i$, since $(p_1,\ldots,p_k)$ is an $R$-long strainer on $U$ by assumption and $r<R$.
        Furthermore, since $\{x_i\}_{i=1}^{M-1}$ is $\delta^{-1}r$-separated, then it follows that $|x_0x_i|$ and $|x_0x_j|$ are larger than $\delta^{-1}r$.
        Since $|x_0x_j|\geq L_0|x_0x_i|$, by Lemma \ref{lemma:variant_S_concave} together with the inequality \eqref{eq:prop_H_dim_6}, it follows that $x_j$ is an $r$-long $(1,2\delta)$-strainer at $x_i$ with the opposite strainer $x_0$.
        Furthermore, by our choice of $r_0$, it follows that $\bar{\delta}_{S,C}(|x_jx_i|;|p_lx_i|) < \delta$ for all $l = 1,\ldots, k$.
        Thus, the first two conditions in Definition \ref{def:k_strainer} are satisfied for the $(k+1)$-tuple $(p_1,\ldots,p_k,x_j)$ at $x_i$.
        
        \item We are left to verify the almost orthogonality condition in Definition \ref{def:k_strainer} for $\tilde{\angle}p_l x_i x_j$ and $\tilde{\angle}p_lx_i x_0$ for any $l\in \{1,\ldots,k\}$.
        For simplicity, we denote $p := p_l$.

        We first establish the almost orthogonality for the comparison angles $\tilde{\angle} p x_i x_j$.
        Indeed, by applying the Euclidean law of cosines to the comparison triangle $\tilde{\Delta}px_ix_j$, it follows that
        \begin{align}\label{eq:prop_H_dim_1}
            \cos\tilde{\angle}px_ix_j
            &=
            \frac{|px_i|^2+|x_ix_j|^2-|px_j|^2}{2|px_i||x_ix_j|}\nonumber\\
            &=
            \frac{\left(|px_i|+|px_j|\right)\left(|px_i|-|px_j|\right)}{2|px_i||x_ix_j|}
            +
            \frac{|x_ix_j|}{2|px_i|}.
        \end{align}
        For the first term of right-hand side of \eqref{eq:prop_H_dim_1}, since $x_i,x_j\in D_x(r,m)$, it follows that $\big||px_i|-|px_j|\big|<0.1r\leq 0.1\delta R\leq 0.1\delta^2|px_i|$, where we use the fact that $(p_1,\ldots,p_k)$ is $R$-long on $U$ in the last inequality.
        Since $|x_ix_j|\geq \delta^{-1}r$, it follows that
        \begin{equation}\label{eq:eq:prop_H_dim_5}
            \left|\frac{\left(|px_i|+|px_j|\right)\left(|px_i|-|px_j|\right)}{2|px_i||x_ix_j|}\right|
            \leq
            \frac{(2+0.1\delta^2)}{2}\frac{0.1r}{\delta^{-1}r}
            <
            \frac{1}{4}\delta.
        \end{equation}
       For the second term of right-hand side of \eqref{eq:prop_H_dim_1}, by our choice of $r_0$ that $|x_ix_j|<2r_x=r_0<R/2$, it follows that
       \begin{equation}
        \frac{|x_ix_j|}{2|px_i|}
        \leq
        \frac{r_0}{2\delta^{-1}R}
        <
        \frac{1}{4}\delta.
       \end{equation}
       In conclusion, we have $|\cos\tilde{\angle}px_ix_j|\leq \delta/2$.
       This implies that $|\tilde{\angle}px_ix_j-\pi/2|<\delta$.
       Similarly, we can prove $|\tilde{\angle}px_i x_0 -\pi/2|<\delta$.
    \end{enumerate}

    In conclusion, $(p_1,\cdots,p_k,x_j)$ is an $r$-long $(k+1, 2\delta)$-strainer at $x_i$.
    This contradicts the fact that $x_i\in D_x(r,m)\setminus \mathcal{A}(k+1,2\delta,r)$ and we complete the proof.
\end{proof}
\begin{remark}
    From our proof of Lemma \ref{lemma:hausdorff_dim_est}, one may see that locally doubling $S$-concave Busemann concave spaces satisfies a strengthened doubling condition, called the \emph{$\mathrm{ATB}(\delta)$}-condition.
    For more details about this condition, we refer to \cite{lebedeva2021self}.
\end{remark}

We are now ready to prove the last theorem. 
This theorem provides a Hausdorff dimension estimate for the singular strata, which is sharper than Theorem \ref{thm:full_measure_n_strained_points}.
A similar result for Alexandrov spaces was established in \citep[Theorem 10.6]{burago1992ad}.

\begin{theorem}\label{thm:main_thm_Hausdorff_dim_est}
    Let $X$ be an $n$-dimensional $S$-concave, locally semi-convex Busemann concave space.
    Then $\mathrm{dim}_H(X \setminus \mathcal{A}(k, \delta)) \leq k - 1$ for any $\delta>0$ and $k=1,\ldots, n$.
\end{theorem}
\begin{proof}
    It suffices to show the case $\delta<\delta_n$, and we prove the assertion by induction.
    Let $\delta'_k:=\delta/2^{n-k}<\delta_k$ for $k=1,\ldots,n-1$.
    The case that $k=0$ is trivial, since $X=\mathcal{A}(0,\delta')$ for any $\delta'>0$.
    Suppose that $\mathrm{dim}_H(X\setminus \mathcal{A}(k,\delta'_{k}))\leq k-1$.

    For any $x\in \mathcal{A}(k,\delta'_{k})$, we can find $R_x>0$ such that $x$ admits a $4R_x$-long $(k,\delta'_{k})$-strainer $(p_1,\ldots,p_k)$.
    By Lemma \ref{lemma:strained_point_openness}, we can find a small neighborhood $U$ of $x$, such that $(p_1,\ldots,p_k)$ is an $R_x$-long $(k,\delta'_{k})$-strainer on $U$. 
    Let $r_x>0$ be the number given in Lemma \ref{lemma:hausdorff_dim_est}.
    
    We claim that the Hausdorff dimension of $B(x, r_x) \setminus \mathcal{A}(k+1, \delta'_{k+1})$ is at most $k$.
    Indeed, it is straightforward to verify that the ball $B(x, r_x)$ can be covered by the cylindrical regions $\{D_x(r, m)\}_{m \in \mathbb{Z}^k}$, with the number of non-empty elements in this covering bounded above by $(2r_x/(0.1 r))^k$.
    Combining with Lemma \ref{lemma:hausdorff_dim_est}, we have
    \begin{multline}\label{eq:main_thm_Hausdorff_dim_est}
        \beta_{B(x,r_x)\setminus \mathcal{A}(k+1,\delta'_{k+1})}\left((\delta'_k)^{-1}r \right)
        \leq
        \sum_{\substack{m\in \mathbb{Z}^k\\ D_x(r,m)\neq \emptyset}}\beta_{D_x(r,m)\setminus \mathcal{A}(k+1,\delta'_{k+1})}\left((\delta'_k)^{-1}r\right)\\
        \leq
        \sum_{\substack{m\in \mathbb{Z}^k\\ D_x(r,m)\neq \emptyset}}\beta_{D_x(r,m)\setminus \mathcal{A}(k+1,\delta'_k,r)}\left((\delta'_k)^{-1}r\right)
        \leq
        \left(\frac{2r_x}{0.1r}\right)^k K(N,\delta'_k,S).
    \end{multline}
    Thus, for any $\varepsilon>0$, it follows from the inequality \eqref{eq:main_thm_Hausdorff_dim_est} that
    \begin{equation}
        \limsup_{r\to 0}\left((\delta'_k)^{-1}r\right)^{k+\varepsilon}\beta_{B(x,r_x)\setminus \mathcal{A}(k+1,\delta'_{k+1})}\left((\delta'_k)^{-1}r\right)
        =
        0.
    \end{equation}
    This implies that the rough dimension (see Section \ref{subsect:H_measure_dimensions_rect}) of $B(x,r_x)\setminus \mathcal{A}(k+1,\delta'_{k+1})$ is at most $k$.
    Since the Hausdorff dimension is not greater than the rough dimension, we obtain that $\mathrm{dim}_H(B(x,r_x)\setminus \mathcal{A}(k+1,\delta'_{k+1}))\leq k$.

    Finally, by covering $\mathcal{A}(k,\delta'_k)$ by at most countable many balls $B(x,r_x)$, it follows that
    \begin{equation}
        \mathrm{dim}_H(\mathcal{A}(k,\delta'_k)\setminus \mathcal{A}(k+1,\delta'_{k+1}))\leq k.
    \end{equation}
    From the assumption of induction, it follows that
    \begin{multline}
        \mathrm{dim}_H\left(X\setminus \mathcal{A}(k+1,\delta'_{k+1})\right)\\
        =
        \max\left\{\mathrm{dim}_H\left(\mathcal{A}(k,\delta'_k)\setminus \mathcal{A}(k+1,\delta'_{k+1})\right), \mathrm{dim}_H\left(X\setminus \mathcal{A}(k,\delta'_{k})\right)\right\}
        \leq
        k.
    \end{multline}
    By induction, we complete the proof.
\end{proof}

As a corollary, we get:
\begin{corollary}\label{cor:stratification}
    Let $X$ be an $n$-dimensional $S$-concave, locally semi-convex Busemann concave space.
    Then $X$ admits a stratification $\{X_k\}_{k=0}^n$ such that $X$ is the disjoint union of $\{X_k\}_{k=0}^n$, with $\mathrm{dim}_H(X_k)\leq k$ for $k=0,\ldots,n$, and the top-dimensional stratum $X_n$ is a topological $n$-manifold.
\end{corollary}
\begin{proof}
    Let $\delta<\delta_{n+1}$.
    Observe that $\mathcal{A}(n+1,\delta)$ is empty.
    Otherwise, by the self-improvement property of strainers (Lemma \ref{lemma:self_improvement_strainer}), $X$ would contain $(n+1,\delta')$-strained points for any $\delta'>0$. 
    This would imply that the strainer number of $X$ is greater than $n$, contradicting the assumption that $X$ is $n$-dimensional.
    Therefore, it is clear that $X$ can be decomposed into the disjoint union of sets $X_k:=\mathcal{A}(k,\delta)\setminus \mathcal{A}(k+1,\delta),k=0\ldots,n-1$ and $X_n:=\mathcal{A}(n,\delta)$. 
    Now the claim follows from Theorem \ref{thm:main_thm_Hausdorff_dim_est} that $\mathrm{dim}_H(X_k)\leq \mathrm{dim}_H(X\setminus \mathcal{A}(k+1,\delta))\leq k$ for $k=0,\ldots,n-1$, and from Theorem \ref{thm:full_measure_n_strained_points} that $X_n$ is a topological $N$-manifold.
\end{proof}


\begin{appendix}
\section{Criterion for open maps}\label{appendix}
\noindent
In the appendix, we provide a proof of the criterion for $\varepsilon$-open maps, Lemma \ref{lemma:criterion_open_map}.
Our proof follows a similar argument as \cite[Lemma 8.1]{lytchak2019geod}, based on a lemma of Lytchak \cite[Lemma 4.1]{lytchak2006open}.
\begin{lemma}
    Let $f:X\rightarrow Y$ be a locally Lipschitz map from a locally complete metric space $X$ to a geodesic space $Y$.
    Suppose there exists an $\varepsilon>0$ such that the following holds: for every $x\in X$ and $v\in Y\setminus \{f(x)\}$ sufficiently close to $f(x)$, there exists $y\in X$ such that
    \begin{equation}\label{lemma:Append_1}
        |f(y)v|-|f(x)v| \leq -\varepsilon|xy|.
    \end{equation}
    Then $f$ is an $\varepsilon'$-open map for any $0<\varepsilon'<\varepsilon$.
\end{lemma}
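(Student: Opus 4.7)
The plan is to apply Ekeland's variational principle to the nonnegative, continuous function $\phi(\cdot) := |f(\cdot)v|$, and then use the hypothesis to rule out the possibility that an Ekeland almost-minimizer has positive value. Fix $x_0 \in X$ and $0 < \varepsilon' < \varepsilon$. Using local completeness of $X$ and local Lipschitzness of $f$, and working on a small neighborhood where the ``sufficiently close'' threshold in the hypothesis can be taken uniform (which is the standard situation in all applications, e.g.\ in the proof of Proposition \ref{prop:strainer_map_open}), I would choose $r > 0$ so small that the closed ball $K := \bar{B}(x_0, 2r/\varepsilon')$ is complete and there exists $\rho_0 \geq r$ with the property that for every $x \in K$ and every $v' \in Y$ with $0 < |f(x)v'| < \rho_0$ the hypothesis produces the desired descent point.

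For any fixed $v \in B(f(x_0), r) \setminus \{f(x_0)\}$, I then apply Ekeland's variational principle to $\phi$ on the complete space $K$ with parameters $\eta := \phi(x_0)$ and $\lambda := \phi(x_0)/\varepsilon'$, so that $\eta/\lambda = \varepsilon'$. This produces a point $y \in K$ with $\phi(y) \leq \phi(x_0)$, with $|yx_0| \leq \phi(x_0)/\varepsilon' \leq r/\varepsilon'$, and with the variational inequality
\[
\phi(z) > \phi(y) - \varepsilon'|yz| \quad \text{for every } z \in K \setminus \{y\}.
\]
It suffices to prove $\phi(y) = 0$: this gives $f(y) = v$ together with $\varepsilon'|yx_0| \leq |f(x_0)v|$, which is exactly the $\varepsilon'$-openness condition at $x_0$.

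Suppose for contradiction that $\phi(y) > 0$. Since $\phi(y) \leq \phi(x_0) < r \leq \rho_0$, the hypothesis applied at $y$ supplies $z \in X$ with $\phi(z) \leq \phi(y) - \varepsilon|yz|$; in particular $\phi(z) < \phi(y)$, so $z \neq y$. From $\phi(z) \geq 0$ we get $|yz| \leq \phi(y)/\varepsilon \leq r/\varepsilon \leq r/\varepsilon'$, whence the triangle inequality yields $|x_0 z| \leq |x_0 y| + |yz| \leq 2r/\varepsilon'$, so $z \in K$. The Ekeland inequality then forces $\phi(y) - \varepsilon|yz| \geq \phi(z) > \phi(y) - \varepsilon'|yz|$, i.e.\ $\varepsilon \leq \varepsilon'$, contradicting $\varepsilon' < \varepsilon$. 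Therefore $\phi(y) = 0$.

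The only technical point driving the choice of the domain $K$ of radius $2r/\varepsilon'$, rather than the naïve $r/\varepsilon'$, is that the variational conclusion of Ekeland's principle is valid only for comparison points lying in the complete space on which the principle was applied; one must therefore ensure that the descent point $z$ from the hypothesis does not escape $K$. The estimate $|yz| \leq \phi(y)/\varepsilon$ forced by the nonnegativity of $\phi$ makes this bookkeeping work with the factor $2$, and I expect no further obstacle.
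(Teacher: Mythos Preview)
Your Ekeland argument is essentially sound and parallels the paper's route, which invokes Lytchak's \cite[Lemma~4.1]{lytchak2006open} (itself an Ekeland-type result) after first showing a slope lower bound $|\nabla_z h|\geq\varepsilon$ for $h(z)=l_v-|f(z)v|$. The structure of the contradiction at the approximate minimizer is identical.

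However, there is a genuine gap: you \emph{assume} a uniform closeness threshold $\rho_0$ valid for all $x\in K$, and this is not in the hypothesis of the lemma --- the ``sufficiently close'' threshold may depend on the point. Notice that your proof never uses that $Y$ is geodesic, which should be a warning sign. The paper's proof uses the geodesic hypothesis precisely to remove this uniformity: at the point $y$ (your Ekeland minimizer), one does not apply the hypothesis with the fixed target $v$, which may be too far from $f(y)$ for the threshold at $y$. Instead one takes a geodesic $\gamma$ from $f(y)$ to $v$, picks $w=\gamma(t)$ close enough to $f(y)$ (within the threshold at $y$, whatever it is), obtains $z$ with $|f(z)w|-|f(y)w|\leq -\varepsilon|yz|$, and then uses that $w$ lies on the geodesic to $v$ to deduce
\[
|f(z)v|-|f(y)v|\leq |f(z)w|+|wv|-\bigl(|f(y)w|+|wv|\bigr)=|f(z)w|-|f(y)w|\leq -\varepsilon|yz|.
\]
With this amendment your Ekeland proof goes through exactly as written; without it the lemma as stated is not proved.
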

\begin{proof}
    Let $x\in X$ and $r>0$ be such that $\bar{B}(x,\varepsilon^{-1}r)$ is complete.

    We first show that $B(f(x),s)\subset f(\bar{B}(x,\varepsilon^{-1}s))$ for any $0<s\leq r$.
    Let $v\in B(f(x),s)\setminus\{f(x)\}$ and $l_v:=|f(x)v|<s$.
    Let $h:X\to \mathbb{R}$ be the function defined as $h(z):=l_v-|f(z)v|$.
    Note that $h(x)=0$. 
    In order to find $y\in \bar{B}(x,\varepsilon^{-1}s)$ such that $f(y)=v$, it suffices to find $y$ such that $h(y)=l_v$.
    To apply \cite[Lemma 4.1]{lytchak2006open}, we need to show that $\limsup_{z'\to z}\frac{h(z')-h(z)}{|z'z|}\geq \varepsilon$ for any $z\in \bar{B}(x,\varepsilon^{-1}s)$.
    Let $z\in \bar{B}(x,\varepsilon^{-1}s)$. 
    Since $Y$ is geodesic, there exists a unit-speed geodesic $\gamma$ connecting $f(z)$ and $v$.
    For a sequence of points $w_n\in \gamma$ converging to $f(z)$, it follows from the assumption \eqref{lemma:Append_1} that there exists a sequence $z'_n\in X$ such that
    \begin{equation}
        |f(z'_n)w_n|-|f(z)w_n|\leq -\varepsilon|z'_nz|.
    \end{equation}
    It follows that $|z'_nz|\leq \varepsilon^{-1}|f(z)w_n|\to 0$.
    Furthermore, since $w_n$ lies in a geodesic connecting $f(z)$ and $v$, it follows that
    \begin{multline}
        |f(z'_n)v|-|f(z)v|
        =
        |f(z'_n)v| - |f(z)w_n| - |w_n v|\\
        \leq
        |f(z'_n)w_n| - |f(z)w_n|
        \leq
        -\varepsilon|z'_nz|.
    \end{multline}
    This implies that we can find a sequence of point $z'_n$ converging to $z$ such that $h(z'_n)-h(z)\geq \varepsilon |z'_nz|$.
    Thus, it follows that
    \begin{equation}
        \left|\nabla_z h\right|
        :=
        \limsup_{z'\to z}\frac{h(z')-h(z)}{|z'z|}
        \geq
        \limsup_{n\to \infty}\frac{h(z'_n)-h(z)}{|z'_nz|}
        \geq
        \varepsilon.
    \end{equation}
    Therefore, we can apply \cite[Lemma 4.1]{lytchak2006open} to the function $h$.
    Let $\varepsilon':=\varepsilon l_v/s<\varepsilon$.
    Since $\bar{B}(x, \varepsilon^{-1}s)\subset \bar{B}(x, \varepsilon^{-1}r)$ is complete, it follows from \cite[Lemma 4.1]{lytchak2006open} that there exists $z\in \bar{B}(x,\varepsilon^{-1}s)$ such that $h(z)=\varepsilon'\varepsilon^{-1}s=l_v$.
    This implies that $B(f(x),s)\subset f(\bar{B}(x,\varepsilon^{-1}s))$.

    We now show that $f$ is $\varepsilon'$-open for any $0<\varepsilon'<\varepsilon$ in the sense of Definition \ref{def:epsilon_open_map}.
    Let $0<\varepsilon'<\varepsilon$ be fixed.
    Let $0<r'<r$ be such that $\bar{B}(x, (\varepsilon')^{-1}r')\subset \bar{B}(x,\varepsilon^{-1}r)$ is complete.
    Let $v\in B(f(x),r')$ and $s:=|f(x)v|<r'$.
    For any $n\in \mathbb{N}$ satisfying $s+1/n<r'$, it holds that $v\in B(f(x), s+1/n)\subset f(\bar{B}(x, \varepsilon^{-1}(s+1/n)))$.
    Therefore, we can find a sequence of points $z_n$ such that $v=f(z_n)$ and
    \begin{equation}
        |xz_n|\leq \varepsilon^{-1}\left(|f(x)v|+\frac{1}{n}\right).
    \end{equation}
    By choosing $n\in \mathbb{N}$ sufficiently large, it holds that
    \begin{equation}
        |xz_n|\leq \varepsilon^{-1}\left(|f(x)v|+\frac{1}{n}\right)
        \leq
        \left(\varepsilon'\right)^{-1}|f(x)v|.
    \end{equation}
    Thus, we have shown that for any $x\in X$, there exists $r'>0$ such that $\bar{B}(x,(\varepsilon')^{-1}r')$ is complete, and for any $v\in B(f(x),r')$, we can find a $z\in X$ such that $f(z)=v$ and 
    \begin{equation}
        \varepsilon' |xz|\leq |f(x)v|.
    \end{equation}
    This implies that $f$ is an $\varepsilon'$-open map.
    
\end{proof}
\end{appendix}

\section*{Declaration}
\noindent
{The authors declare that there is no conflict of interest, and this paper has no associated data.}

\section*{Acknowledgements}
\noindent
The authors would like to express their gratitude to Prof.Tadashi Fujioka for bringing to their attention the reference \cite{lebedeva2021self} and fruitful discussions. 
His suggestions greatly improve the results of Lemma \ref{lemma:variant_S_concave} and Theorem \ref{thm:main_thm_Hausdorff_dim_est}.
The authors would also like to thank Professors Alexander Lytchak, Shijie Gu and Shin-ichi Ohta for their constructive comments on this manuscript, and productive discussions on several potential open problems of Busemann concave spaces.
This work is supported in part by Young Scientist Programs of the Ministry of Science and Technology of China (2021YFA1000900, 2021YFA1002200), and National Natural Science Foundation of China (12201596)

\bigskip
\begin{flushleft}
\small \normalfont
\textsc{Bang-Xian Han\\
School of Mathematics, Shandong University, Jinan, 250100, China}\\
\texttt{\textbf{hanbx@sdu.edu.cn}}
\end{flushleft}

\medskip
\begin{flushleft}
\small \normalfont
\textsc{Liming Yin\\
School of Mathematical Sciences, University of Science and Technology of China, Hefei, 230026, China}\\
\texttt{\textbf{yinliming@ustc.edu.cn}}
\end{flushleft}


\begin{thebibliography}{62}
\providecommand{\natexlab}[1]{#1}
\providecommand{\url}[1]{\texttt{#1}}
\expandafter\ifx\csname urlstyle\endcsname\relax
  \providecommand{\doi}[1]{doi: #1}\else
  \providecommand{\doi}{doi: \begingroup \urlstyle{rm}\Url}\fi

\bibitem[Alexander et~al.(2024)Alexander, Kapovitch, and
  Petrunin]{alexander2024alexandrov}
Stephanie Alexander, Vitali Kapovitch, and Anton Petrunin.
\newblock \emph{Alexandrov geometry: Foundations}, volume 236.
\newblock American Mathematical Society, 2024.

\bibitem[Alexandrov(1957)]{alexandrov1957uber}
A.~D. Alexandrov.
\newblock {\"U}ber eine {V}erallgemeinerung der {R}iemannschen geometrie.
\newblock \emph{Schr. Forschungsinst. Math.}, 1:\penalty0 33--84, 1957.

\bibitem[Alonso et~al.(2022)Alonso, Martini, and Wu]{alonso2022orthogonality}
Javier Alonso, Horst Martini, and Senlin Wu.
\newblock Orthogonality types in normed linear spaces.
\newblock In \emph{Surveys in Geometry I}, pages 97--170. Springer, 2022.

\bibitem[Ambrosio and Bertrand(2018)]{ambrosio2018dc}
Luigi Ambrosio and J{\'e}r{\^o}me Bertrand.
\newblock {DC} calculus.
\newblock \emph{Math. Z.}, 288:\penalty0 1037--1080, 2018.

\bibitem[Ambrosio and Kirchheim(2000)]{ambrosio2000rectifiable}
Luigi Ambrosio and Bernd Kirchheim.
\newblock Rectifiable sets in metric and {B}anach spaces.
\newblock \emph{Math. Ann.}, 318:\penalty0 527--555, 2000.

\bibitem[Ambrosio and Tilli(2003)]{ambrosio2004topics}
Luigi Ambrosio and Paolo Tilli.
\newblock \emph{Topics on Analysis in Metric Spaces}.
\newblock Oxford University Press, 2003.

\bibitem[Ball et~al.(1994)Ball, Carlen, and Lieb]{ball1994sharp}
Keith Ball, Eric~A. Carlen, and Elliott~H. Lieb.
\newblock Sharp uniform convexity and smoothness inequalities for trace norms.
\newblock \emph{Invent. Math.}, 115\penalty0 (1):\penalty0 463--482, 1994.

\bibitem[Ballmann(1995)]{ballmann1995lectures}
Werner Ballmann.
\newblock \emph{Lectures on Spaces of Nonpositive Curvature}, volume~25.
\newblock Springer, 1995.

\bibitem[Bate(2022)]{bate2022characterising}
David Bate.
\newblock Characterising rectifiable metric spaces using tangent spaces.
\newblock \emph{Invent. math.}, 230:\penalty0 995--1070, 2022.

\bibitem[Bate and Li(2017)]{bate2017characterizations}
David Bate and Sean Li.
\newblock Characterizations of rectifiable metric measure spaces.
\newblock \emph{Annales scientifiques de l'{\'E}cole normale sup{\'e}rieure},
  50\penalty0 (1):\penalty0 1--37, 2017.

\bibitem[Birkhoff(1935)]{birkhoff1935orthogonality}
Garrett Birkhoff.
\newblock Orthogonality in linear metric spaces.
\newblock \emph{Duke Math. J.}, 1\penalty0 (2):\penalty0 169--172, 1935.

\bibitem[Bj{\"o}rn and Bj{\"o}rn(2011)]{bjorn2011nonlinear}
Anders Bj{\"o}rn and Jana Bj{\"o}rn.
\newblock \emph{Nonlinear Potential Theory on Metric Spaces}.
\newblock European Mathematical Society, 2011.

\bibitem[Bridson and Haefliger(1999)]{bridson1999metric}
Martin~R Bridson and Andr{\'e} Haefliger.
\newblock \emph{Metric Spaces of Non-positive Curvature}.
\newblock Springer, 1999.

\bibitem[Burago et~al.(2001)Burago, Burago, and Ivanov]{burago2001course}
Dmitri Burago, Yuri Burago, and Sergei Ivanov.
\newblock \emph{A Course in Metric Geometry}, volume~33 of \emph{Graduate
  Studies in Mathematics}.
\newblock American Mathematical Society, 2001.

\bibitem[Burago et~al.(1992)Burago, Gromov, and Perel'man]{burago1992ad}
Yu~Burago, Mikhael Gromov, and Grigori Perel'man.
\newblock A.{D.} {A}lexandrov spaces with curvature bounded below.
\newblock \emph{Russian Math. Surveys}, 47\penalty0 (2):\penalty0 1--58, 1992.

\bibitem[Busemann(1948)]{busemann1948spaces}
Herbert Busemann.
\newblock Spaces with non-positive curvature.
\newblock \emph{Acta Math.}, 80:\penalty0 259--310, 1948.

\bibitem[Busemann(1955)]{busemann1955geometry}
Herbert Busemann.
\newblock \emph{The Geometry of Geodesics}.
\newblock Academic Press, 1955.

\bibitem[Chmieli{\'n}ski and W{\'o}jcik(2018)]{chmielinski2018approximate}
Jacek Chmieli{\'n}ski and Pawe{\l} W{\'o}jcik.
\newblock {A}pproximate symmetry of {B}irkhoff orthogonality.
\newblock \emph{Journal of Mathematical Analysis and Applications},
  461\penalty0 (1):\penalty0 625--640, 2018.

\bibitem[David(2015)]{guy_c_david2015GAFA}
Guy~C. David.
\newblock Tangents and rectifiability of {A}hlfors regular {L}ipschitz
  differentiability spaces.
\newblock \emph{Geom. Funct. Anal.}, 25:\penalty0 553--579, 2015.

\bibitem[Donne(2011)]{donne2010unique_tangent_cone}
Enrico~Le Donne.
\newblock Metric spaces with unique tangents.
\newblock \emph{Annales Academi{\ae} Scientiarum Fennic{\ae} Mathematica},
  36:\penalty0 683--694, 2011.

\bibitem[Fujioka and Gu(2025)]{fujioka2025top}
Tadashi Fujioka and Shijie Gu.
\newblock Topological regularity of {B}usemann spaces of nonpositive curvature.
\newblock \emph{arXiv:2504.14455}, 2025.

\bibitem[Gigli et~al.(2015)Gigli, Mondino, and Rajala]{gigli2015euclidean}
Nicola Gigli, Andrea Mondino, and Tapio Rajala.
\newblock Euclidean spaces as weak tangents of infinitesimally {H}ilbertian
  metric measure spaces with {R}icci curvature bounded below.
\newblock \emph{Journal f{\"u}r die reine und angewandte Mathematik (Crelles
  Journal)}, 2015\penalty0 (705):\penalty0 233--244, 2015.

\bibitem[Han and Yin()]{ylm2026Busemann_II}
Bang-Xian Han and Liming Yin.
\newblock On the structure of {B}usemann spaces with non-negative curvature
  {II}.
\newblock \emph{In preparation}.

\bibitem[Heinonen(2001)]{heinonen2001lectures}
Juha Heinonen.
\newblock \emph{Lectures on Analysis on Metric Spaces}.
\newblock Springer, 2001.

\bibitem[Heinonen et~al.(2015)Heinonen, Koskela, Shanmugalingam, and
  Tyson]{shanmugalingam2015sobolev}
Juha Heinonen, Pekka Koskela, Nageswari Shanmugalingam, and Jeremy~T. Tyson.
\newblock \emph{Sobolev Spaces on Metric Measure Spaces: An Approach Based on
  Upper Gradients}.
\newblock Cambridge University Press, 2015.

\bibitem[Ivanov and Lytchak(2019)]{ivanov2019rigidity}
Sergei Ivanov and Alexander Lytchak.
\newblock Rigidity of {B}usemann convex {F}insler metrics.
\newblock \emph{Comment. Math. Helv.}, 94\penalty0 (4):\penalty0 855--868,
  2019.

\bibitem[James(1945)]{james1945orthogonality}
R.~C. James.
\newblock Orthogonality in normed linear spaces.
\newblock \emph{Duke Math. J.}, 12\penalty0 (2):\penalty0 291--302, 1945.

\bibitem[James(1947)]{james1947orthogonality}
R.~C James.
\newblock Orthogonality and linear functionals in normed linear spaces.
\newblock \emph{Trans. Am. Math. Soc.}, 61\penalty0 (2):\penalty0 265--292,
  1947.

\bibitem[Kann(1961)]{kann1960bonnet}
Edgar Kann.
\newblock Bonnet's theorem in two-dimensional {G}-space.
\newblock \emph{Commun. Pure Appl. Math.}, 14\penalty0 (4):\penalty0 765--784,
  1961.

\bibitem[Kell(2015)]{kell2015Berwald}
Martin Kell.
\newblock A note on non-negatively curved {B}erwald spaces.
\newblock \emph{arXiv:1502.03764}, 2015.

\bibitem[Kell(2019)]{kell2019sectional}
Martin Kell.
\newblock Sectional curvature-type conditions on metric spaces.
\newblock \emph{J. Geom. Anal.}, 29:\penalty0 616--655, 2019.

\bibitem[Kell(2020)]{kell2016symmetric}
Martin Kell.
\newblock Symmetric orthogonality and non-expansive projections in metric
  spaces.
\newblock \emph{manuscripta math.}, 161:\penalty0 141--159, 2020.

\bibitem[Kirchheim(1994)]{kirchheim1994rectifiable}
Bernd Kirchheim.
\newblock Rectifiable metric spaces: local structure and regularity of the
  {H}ausdorff measure.
\newblock \emph{Proceedings of the American Mathematical Society}, 121\penalty0
  (1):\penalty0 113--123, 1994.

\bibitem[Kondo et~al.(2012)Kondo, Ohta, and Tanaka]{kondo2012toponogov}
Kei Kondo, Shin-ichi Ohta, and Minoru Tanaka.
\newblock A {T}oponogov type triangle comparison theorem in {F}insler geometry.
\newblock \emph{arXiv:1205.3913}, 2012.

\bibitem[Krist{\'a}ly and Kozma(2006)]{kristaly2006metric}
Alexandru Krist{\'a}ly and L{\'a}szl{\'o} Kozma.
\newblock Metric characterization of {B}erwald spaces of non-positive flag
  curvature.
\newblock \emph{Journal of Geometry and Physics}, 56\penalty0 (8):\penalty0
  1257--1270, 2006.

\bibitem[Krist{\'a}ly et~al.(2024)Krist{\'a}ly, Varga, and
  Kozma]{kozma2004dispersing}
Alexandru Krist{\'a}ly, Csaba Varga, and L{\'a}szl{\'o} Kozma.
\newblock The dispersing of geodesics in {B}erwald spaces of non-positive flag
  curvature.
\newblock \emph{Houston Journal of Math.}, 30\penalty0 (2):\penalty0 413--420,
  2024.

\bibitem[Kuwae(2014)]{kuwae2014jensen}
Kazuhiro Kuwae.
\newblock Jensen's inequality on convex spaces.
\newblock \emph{Calc. Var.}, 49\penalty0 (3):\penalty0 1359--1378, 2014.

\bibitem[Lebedeva et~al.(2021)Lebedeva, Ohta, and Zolotov]{lebedeva2021self}
Nina Lebedeva, Shin-ichi Ohta, and Vladimir Zolotov.
\newblock Self-contracted curves in spaces with weak lower curvature bound.
\newblock \emph{International Mathematics Research Notices}, 2021\penalty0
  (11):\penalty0 8623--8656, 2021.

\bibitem[Lytchak(2006)]{lytchak2006open}
Alexander Lytchak.
\newblock Open map theorem for metric spaces.
\newblock \emph{St. Petersburg Math. J.}, 17\penalty0 (3):\penalty0 477--491,
  2006.

\bibitem[Lytchak and Nagano(2019)]{lytchak2019geod}
Alexander Lytchak and Koichi Nagano.
\newblock Geodesically complete spaces with an upper curvature bound.
\newblock \emph{Geom. Funct. Anal.}, 29\penalty0 (1):\penalty0 295--342, 2019.

\bibitem[Lytchak and Nagano(2021)]{lytchak2021topological}
Alexander Lytchak and Koichi Nagano.
\newblock Topological regularity of spaces with an upper curvature bound.
\newblock \emph{J. Eur. Math. Soc}, 24\penalty0 (1):\penalty0 137--165, 2021.

\bibitem[Lytchak and Schroeder(2007)]{lytschak2004affine}
Alexander Lytchak and Viktor Schroeder.
\newblock Affine functions on $\mathrm{CAT}(\kappa)$ spaces.
\newblock \emph{Math. Z.}, 255:\penalty0 231--244, 2007.

\bibitem[Lytchak et~al.(2024)Lytchak, Nagano, and Stadler]{stadler2024cat}
Alexander Lytchak, Koichi Nagano, and Stephan Stadler.
\newblock {CAT}(0) 4-manifolds are {E}uclidean.
\newblock \emph{Geom. Topol.}, 28\penalty0 (7):\penalty0 3285--3308, 2024.

\bibitem[Magnabosco et~al.(2025)Magnabosco, Mondino, and
  Rossi]{mondino2025rectifiability}
Mattia Magnabosco, Andrea Mondino, and Tommaso Rossi.
\newblock On the rectifiability of {CD(K, N)} and {MCP(K, N)} spaces with
  unique tangents.
\newblock \emph{arXiv:2505.01151}, 2025.

\bibitem[Ohta(2007{\natexlab{a}})]{ohta2007convexities}
Shin-ichi Ohta.
\newblock Convexities of metric spaces.
\newblock \emph{Geom. Dedicata}, 125\penalty0 (1):\penalty0 225--250,
  2007{\natexlab{a}}.

\bibitem[Ohta(2007{\natexlab{b}})]{ohta2007measure}
Shin-ichi Ohta.
\newblock On the measure contraction property of metric measure spaces.
\newblock \emph{Comment. Math. Helv.}, 82\penalty0 (4):\penalty0 805--828,
  2007{\natexlab{b}}.

\bibitem[Ohta(2009{\natexlab{a}})]{ohta2009gradient}
Shin-ichi Ohta.
\newblock Gradient flows on {W}asserstein spaces over compact {A}lexandrov
  spaces.
\newblock \emph{American Journal of Mathematics}, 131\penalty0 (2):\penalty0
  475--516, 2009{\natexlab{a}}.

\bibitem[Ohta(2009{\natexlab{b}})]{ohta2009uniform}
Shin-ichi Ohta.
\newblock Uniform convexity and smoothness, and their applications in {F}insler
  geometry.
\newblock \emph{Math. Ann.}, 343:\penalty0 669--699, 2009{\natexlab{b}}.

\bibitem[Ohta(2021)]{ohta2021comparison}
Shin-ichi Ohta.
\newblock \emph{Comparison Finsler Geometry}.
\newblock Springer, 2021.

\bibitem[Ohta and Sturm(2012)]{ohta2012non}
Shin-ichi Ohta and Karl-Theodor Sturm.
\newblock Non-contraction of heat flow on {M}inkowski spaces.
\newblock \emph{Arch. Rational Mech. Anal.}, 204\penalty0 (3):\penalty0
  917--944, 2012.

\bibitem[Otsu and Shioya(1994)]{otsu1994riemannian}
Yukio Otsu and Takashi Shioya.
\newblock The {R}iemannian structure of {A}lexandrov spaces.
\newblock \emph{J. Differential Geometry}, 39\penalty0 (3):\penalty0 629--658,
  1994.

\bibitem[Papadopoulos(2014)]{papadopoulos2014metric}
Athanase Papadopoulos.
\newblock \emph{Metric S, Convexity and Non-positive Curvature}.
\newblock European Mathematical Society, 2nd edition, 2014.

\bibitem[Perelman(1991)]{perelman1991alexandrov}
G.~Perelman.
\newblock Alexandrov spaces with curvatures bounded from below {II}.
\newblock \emph{preprint}, 1991.
\newblock \url{https://anton-petrunin.github.io/papers/}.

\bibitem[Perelman(1994)]{perelman1994dc}
G.~Perelman.
\newblock {DC} structure on {A}lexandrov space.
\newblock \emph{preprint}, 1994.
\newblock \url{https://anton-petrunin.github.io/papers/}.

\bibitem[Perelman and Petrunin(1994)]{perelman1994quasigeodesics}
G.~Perelman and A.~Petrunin.
\newblock Quasigeodesics and gradient curves in {A}lexandrov spaces.
\newblock \emph{preprint}, 1994.
\newblock \url{https://anton-petrunin.github.io/papers/}.

\bibitem[Perel'man and Petrunin(1994)]{perel1994extremal}
G.~Ya Perel'man and A.~M. Petrunin.
\newblock Extremal subsets in {A}leksandrov spaces and the generalized
  {L}iberman theorem.
\newblock \emph{St. Petersburg Math. J.}, 5\penalty0 (1):\penalty0 215, 1994.

\bibitem[Petrunin(1998)]{petrunin1998parallel}
Anton Petrunin.
\newblock Parallel transportation for {A}lexandrov space with curvature bounded
  below.
\newblock \emph{Geom. Funct. Anal.}, 8\penalty0 (1):\penalty0 123--148, 1998.

\bibitem[Shankar and Sormani(2009)]{sormani2009conjugate}
K.~Shankar and C.~Sormani.
\newblock Conjugate points in length spaces.
\newblock \emph{Advances in Mathematics}, 220\penalty0 (3):\penalty0 791--830,
  2009.

\bibitem[Shen(2001)]{shen2001lectures}
Zhongmin Shen.
\newblock \emph{Lectures on Finsler geometry}.
\newblock World Scientific, 2001.

\bibitem[Sturm(2006)]{S-O2}
Karl-Theodor Sturm.
\newblock On the geometry of metric measure spaces. {II}.
\newblock \emph{Acta Math.}, 196\penalty0 (1):\penalty0 133--177, 2006.
\newblock ISSN 0001-5962.

\bibitem[Thurston(1996)]{thurston1996Gspace}
Paul Thurston.
\newblock 4-dimensional {B}usemann {G}-space are 4-manifolds.
\newblock \emph{Differential Geometry and its Applications}, 6\penalty0
  (3):\penalty0 245--270, 1996.

\bibitem[von Renesse(2008)]{von2008local}
Max-K. von Renesse.
\newblock On local {P}oincar{\'e} via transportation.
\newblock \emph{Math. Z.}, 259:\penalty0 21--31, 2008.

\end{thebibliography}
\end{document}